%

\input ./style/arxiv-general.cfg
\documentclass[aop,MSNbibl,seceqn,dvips]{arximspdf}
\makeatletter
   \@ifpackageloaded{graphicx}{}{\usepackage{graphicx}}
\makeatother

%

\doi{10.1214/15-AOP1014}
\volume{44}
\issue{3}
\pubyear{2016}
\firstpage{1956}
\lastpage{1984}
\docsubty{FLA}

\makeatletter
\newcommand{\rrvert}{\vert}
\newcommand{\llvert}{\vert}
\newcommand{\iint}{\int\!\!\!\int}
\newcommand{\eqref}[1]{(\ref{#1})}
\newcommand{\prob}{\P}
\newcommand{\diam}{\operatorname{diam}}
\newcommand{\carr}{\operatorname{carr}}
\newcommand{\supp}{\operatorname{supp}}
\newcommand{\cone}{\mathrm{Cone}}
\def\Veuc{V_{\mathrm{euc}}}
\def\Beuc{B_{\mathrm{euc}}}
\def\sE{{\mathcal E}}
\def\sD{{\mathcal D}}
\def\I{{\mathcal{I}}}
\newcommand{\eps}{\varepsilon}
\def\Reff{R_{\mathrm{eff}}}
\def\oX{{\overline X}}
\newcommand{\cH}{{\mathcal H}}
\newcommand{\cM}{{\mathcal M}}
\newcommand{\leb}{\operatorname{\mathfrak{Leb}}}
\renewcommand{\P}{\mathbb P}
\newcommand{\Z}{\mathbb Z}
\newcommand{\E}{\mathbb E}
\newcommand{\R}{\mathbb R}

\newtheorem{thmm}{Theorem}[section]
\newtheorem{theorem}[thmm]{Theorem}
\newtheorem{lemma}[thmm]{Lemma}
\newtheorem{corollary}[thmm]{Corollary}

\newtheorem{proposition}[thmm]{Proposition}
\newproclaim{defn}[thmm]{Definition}
\newproclaim{remark}[thmm]{Remark}
\newproclaim{example}[thmm]{Example}

\makeatother

\begin{document}
\begin{frontmatter}

\title{Boundaries of planar graphs, via circle packings\thanksref{TT1}}
\runtitle{Boundaries of planar graphs, via circle packings}
\thankstext{TT1}{Supported NSERC grants.}

\begin{aug}
\author[A]{\fnms{Omer}~\snm{Angel}\thanksref{m1,TT2}\ead[label=e1]{angel@math.ubc.ca}},
\author[A]{\fnms{Martin T.}~\snm{Barlow}\thanksref{m1}\ead[label=e2]{barlow@math.ubc.ca}},
\author[B]{\fnms{Ori }~\snm{Gurel-Gurevich}\thanksref{m2}\ead[label=e3]{origurel@math.huji.ac.il}}
\and
\author[C]{\fnms{Asaf}~\snm{Nachmias}\corref{}\thanksref{m1,m3}\ead[label=e4]{asafnach@math.ubc.ca}}
\runauthor{Angel, Barlow, Gurel-Gurevich and Nachmias}
\affiliation{University of British Columbia\thanksmark{m1}, Hebrew
University\thanksmark{m2} and Tel Aviv University\thanksmark{m3}}
\thankstext{TT2}{Supported in part by IHES, OGG by
PIMS, and AN by NSF grant.}
\address[A]{O. Angel\\
M. Barlow\\
Department of Mathematics\\
University of British Columbia\\
Vancouver, British Columbia V6T 1Z2\\
Canada\\
\printead{e1}\\
\phantom{E-mail:\ }\printead*{e2}}
\address[B]{O. Gurel-Gurevich\\
Einstein Institute of Mathematics\\
Hebrew University of Jerusalem\\
Givat Ram, Jerusalem, 91904\\
Israel\\
\printead{e3}}
\address[C]{A. Nachmias\\
Department of Mathematics\\
University of British Columbia\\
Vancouver, British Columbia V6T 1Z2\\
Canada\\
and\\
Department of Mathematical Sciences\\
Tel Aviv University\\
Tel Aviv 69978\\
Israel\\
\printead{e4}}
\end{aug}

%
\received{\smonth{11} \syear{2013}}
%
\revised{\smonth{1} \syear{2015}}

%
\begin{abstract}
We provide a geometric representation of the Poisson and Martin
boundaries of a transient, bounded degree triangulation of the plane in
terms of its circle packing in the unit disc. (This packing is unique up
to M\"obius transformations.) More precisely, we show that any bounded
harmonic function on the graph is the harmonic extension of some
measurable function on the boundary of the disk, and that the space of
extremal positive harmonic functions, that is, the Martin boundary, is
homeomorphic to the unit circle.

All our results hold more generally for any ``good''-embedding of planar
graphs, that is, an embedding in the unit disc with straight lines such
that angles are bounded away from $0$ and $\pi$ uniformly, and lengths of
adjacent edges are comparable. Furthermore, we show that in a good
embedding of a planar graph the probability that a random walk exits a
disc through a sufficiently wide arc is at least a constant, and that
Brownian motion on such graphs takes time of order $r^2$ to exit a disc
of radius $r$. These answer a question recently posed by Chelkak
(2014).
\end{abstract}

%
\begin{keyword}[class=AMS]
\kwd{05C81}
\end{keyword}
\begin{keyword}
\kwd{Planar graph}
\kwd{random walk}
\kwd{Poisson boundary}
\kwd{Martin boundary}
\kwd{circle packing}
\kwd{hyperbolic}
\end{keyword}
\end{frontmatter}

\section{Introduction}\label{sec1}

Given a Markov chain, it is natural to ask what is its ``final'' behavior,
that is, the behavior as the time tends to infinity. For example, consider
the lazy simple random walk on a rooted $3$-regular tree---the path of
the random walk almost surely determines a unique infinite branch of the
tree. This branch is determined by the tail $\sigma$-field of the random
walk and moreover, this $\sigma$-field is characterized by the set of such
infinite branches. In general, it is more useful to consider the \emph{invariant} $\sigma$-field $\I$, that is, all the events that are
invariant under the time-shift operator. In the case of lazy Markov chains,
these two $\sigma$-fields are equivalent \cite{De,Ka}.

To any invariant event $A$ we can associate a harmonic function $h_A$ on
the state space by $h_A(x) = \prob_x(A)$, that is, the probability
that $A$
occurs starting the chain from $x$. (A function $h$ is harmonic if its
value at a state is the expected value of $h$ after one step of the chain.)
In fact, there is a correspondence between bounded invariant random
variables and bounded harmonic functions on the state space (given such a
random variable $Y$, the function is $h_Y(x) = \E_x(Y)$, see
\cite{Ka,LP}). Thus, the set of bounded harmonic functions on the state
space characterizes all the ``final'' behaviors of the Markov chain.

In this paper, we consider reversible Markov chains in discrete time and
space (i.e., weighted random walks on a graphs). It is not hard to see
that if the chain is recurrent, then there are no nonconstant bounded
harmonic functions. On the other hand, transience does not guarantee the
existence of such functions, as can be seen in the simple random walk on
$\Z^3$. However, in the planar case there is such a dichotomy: Benjamini
and Schramm \cite{BS2} proved that if $G$ is a transient, bounded degree
planar graph, then $G$ exhibits nonconstant bounded harmonic functions.

The proof in \cite{BS2} relies on the theory of circle packing. Recall that
a \emph{circle packing} $P$ of a planar graph $G$ is a set of circles
with disjoint interiors $\{C_v\}_{v\in G}$ such that two circles are
tangent if and only if the corresponding vertices form an edge. Koebe's
circle packing theorem \cite{Ko} states that any planar graph has a circle
packing, and that for triangulations (graphs where all faces are triangles)
the circle packing is essentially unique. Given a circle packing, we embed
the graph in $\R^2$, with straight line segments between the corresponding
centers of circles for edges. The \emph{carrier} of $P$, denoted
$\carr(P)$, is the union of all the closed polygons corresponding to
the faces. He
and Schramm \cite{HS} provided an insightful connection between the
probabilistic notion of recurrence or transience of $G$ and the
geometry of
$\carr(P)$. Their theorem states that if $G$ is a bounded degree one-ended
triangulation, then it can be circle packed so that the carrier is either
the entire plane or the open unit disc $U$ according to whether $G$ is recurrent
or transient, respectively. Since we are interested in nonconstant bounded
harmonic functions, we consider here only the latter case.

Consider a transient, bounded degree, one-ended triangulation $G$ and its
circle packing $P=\{C_v\}_{v\in V}$ with $\carr(P)=U$.
We identify each
vertex $v$ with the center of $C_v$---it will always be clear from the
context if the letter $v$ represents a vertex or a point in $\R^2$. Let
$\{X_n\}$ be the simple random walk on $G$. A principal result of Benjamini
and Schramm \cite{BS2} is that $\lim_{n\to\infty} X_n$ exists and
is a
point $X_\infty\in\partial U$ almost surely, and furthermore its
distribution is nonatomic. This immediately implies that any bounded
measurable function $g: \partial U \to\R$ can be extended to a bounded
harmonic function $h: V \to\R$ by setting $h(v) = \E_v [
g(X_\infty)
 ]$, where for a vertex $x$ we write $\E_x$ for the expectation
w.r.t.
the random walk started at $x$. Since the distribution of $X_\infty$ is
nonatomic they deduce that a nonconstant bounded harmonic function
exists.
Note that since each vertex $v$ is in $\carr(P)$, we cannot have a vertex
on the boundary of $U$.

The main result of this paper is that there are no other bounded harmonic
functions, that is, any bounded harmonic function can be represented
this way.
Recall that a graph is one-ended if removal of any finite set of vertices
leaves only one infinite connected component.

\begin{theorem}
\label{T:mainmainthm}
Let $G=(V,E)$ be a transient bounded degree, one-ended triangulation and
let $P$ be a circle packing of $G$ with $\carr(P)=U$. Then for any
bounded harmonic function $h:V \to\R$ there exists a bounded measurable
function $g: \partial U \to\R$ such that $h(v)= \E_v  [
g(X_\infty)
 ]$.
\end{theorem}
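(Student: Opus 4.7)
The plan is to reduce the theorem to a $\sigma$-field identification. Let $h$ be a bounded harmonic function and set $M_n := h(X_n)$. Since $h$ is bounded, $(M_n)$ is a bounded $\P_v$-martingale for every starting vertex $v$, so $M_\infty := \lim_{n\to\infty} M_n$ exists $\P_v$-a.s.\ and in $L^1$, and the tower/optional-stopping identity gives $h(v)=\E_v[M_\infty]$. Hence it suffices to show that $M_\infty$ is $\sigma(X_\infty)$-measurable modulo $\P_v$-null sets for every $v$; once this is in hand, the function $g(\xi) := \E[M_\infty \mid X_\infty = \xi]$ is a bounded measurable function on $\partial U$ with $h(v)=\E_v[g(X_\infty)]$ as required.

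To establish the measurability I would approximate $M_\infty$ in $L^1$ by simple functions of $X_\infty$. Fix $\eps>0$ and cover $\partial U$ by finitely many disjoint open arcs $I_1,\ldots,I_N$ of Euclidean diameter at most $\delta$ (to be chosen small compared to $\eps$). For each $k$, use the circle packing with $\carr(P)=U$ to single out an ``anchor'' vertex $v_k$ whose circle lies close to the midpoint of $I_k$ in the Euclidean metric. The wide-arc exit estimate promised in the abstract then guarantees that a random walk started at $v_k$ hits $\partial U$ inside a moderately enlarged arc $J_k\supset I_k$ with probability bounded below by an absolute constant. The candidate approximation is
\[
  \tilde M_\delta := \sum_{k=1}^N h(v_k)\,\mathbf{1}_{\{X_\infty \in I_k\}},
\]
and the claim is that $\E_v|M_\infty - \tilde M_\delta|\to 0$ as $\delta\downarrow 0$, so $M_\infty$ lies in the $L^1$-closure of $\sigma(X_\infty)$-measurable functions and is therefore itself $\sigma(X_\infty)$-measurable.

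The main obstacle is showing $M_\infty\approx h(v_k)$ on $\{X_\infty\in I_k\}$. Heuristically, the walk conditioned to converge to a point of $I_k$ must enter every Euclidean neighborhood of $X_\infty$; the exit-time bound ($\sim r^2$ to leave a disc of radius $r$) together with the wide-arc estimate should force it to visit a small set of vertices near $v_k$ at some finite stopping time $\tau_k$, with high conditional probability given $\{X_\infty\in I_k\}$. Once this is done, the strong Markov property yields $\E_v[M_\infty\mid\mathcal F_{\tau_k}]=h(X_{\tau_k})\approx h(v_k)$ on $\{\tau_k<\infty\}$, which combined with $h(X_n)\to M_\infty$ pins $M_\infty$ close to $h(v_k)$ on that event. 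The hard part, as I see it, is this ``return to the anchor'' statement: converting the Euclidean convergence $X_n\to X_\infty$ into a statement that the discrete walk visits a prescribed combinatorial target inside a shrinking Euclidean window. I expect this will require a dyadic/scale-invariant decomposition of annuli around $X_\infty$, repeatedly applying the wide-arc and exit-time estimates, and invoking the good-embedding hypothesis on angles and edge-length ratios to ensure the geometric comparisons descend to the underlying graph.
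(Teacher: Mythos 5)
Your reduction is correct: setting $M_n = h(X_n)$, the bounded martingale converges to $M_\infty$ with $h(v)=\E_v[M_\infty]$, and the theorem is equivalent to the statement that $M_\infty$ is $\sigma(X_\infty)$-measurable (mod $\P_v$-null sets). This is a standard and clean way to frame the Poisson boundary identification, and it differs from the route the paper actually takes: the paper first proves the Martin boundary result (\cref{T:martinbdry}), via volume doubling, Poincar\'e, and the boundary Harnack principle of Lierl--Saloff-Coste, and then deduces the Poisson boundary statement abstractly from Martin theory (Woess, Ch.~24). So if your argument worked it would be a genuinely more direct proof, which is roughly the coupling route the paper mentions but does not pursue.

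However, there is a real gap at the key step. Once you have the stopping time $\tau_k$ with $X_{\tau_k}$ near $v_k$, the strong Markov property gives only
\[
\E_v\!\left[M_\infty \mid \mathcal F_{\tau_k}\right] = h(X_{\tau_k}),
\]
which is a statement about the \emph{conditional mean} of $M_\infty$, not about $M_\infty$ itself. It does not ``pin $M_\infty$ close to $h(v_k)$'': after hitting the anchor the walk can still land anywhere in $\partial U$, and the conditional distribution of $M_\infty$ given $\mathcal F_{\tau_k}$ can have large spread. The phrase ``combined with $h(X_n)\to M_\infty$'' does not add information here; it is just restating martingale convergence. What you actually need is a second-moment (or coupling) statement, namely that conditioned on $\{X_\infty\in I_k\}$ the random variable $M_\infty$ is nearly constant, and this is essentially what the theorem asserts, so as written the argument is circular. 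Moreover, even granting the full theorem, $M_\infty = g(X_\infty)$ for a bounded measurable $g$ that can oscillate wildly on small arcs, so $M_\infty \approx h(v_k)$ on $\{X_\infty\in I_k\}$ is simply false in general; the $L^1$-approximation $\E_v|M_\infty - \tilde M_\delta|\to 0$ requires that the conditional harmonic measures $\omega_{v_k}(\cdot \mid I_k)$ be comparable to $\omega_v(\cdot\mid I_k)$ uniformly in $\delta$, which is itself a boundary Harnack type fact, not supplied by the wide-arc and exit-time estimates alone.

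To repair this along your lines you would need the coupling described (but not carried out) in the paper's introduction: conditioning two walks from different starting points to be absorbed near $\xi$ (as a weak limit of conditionings on shrinking neighbourhoods), and using the boundary Harnack principle to show the conditioned walks have positive probability to meet in each annulus around $\xi$, hence coincide eventually. That coupling is what makes the tail $\sigma$-field trivial given $X_\infty$, and is substantially more work than the sketch you give. The paper circumvents this entirely by going through the Martin boundary.
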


For a vertex $x$, we write $\prob_x$ for the probability measure on
$G^{\mathbb{N}}$ of the Markov chain started at $x$. The measure space
$(G^{\mathbb{N}}, \I, \prob_x)$ is often called the \emph{Poisson boundary}
of the chain. The choice of $x$ does not matter much because the measures
$\P_x$ are absolutely continuous with respect to each other. As mentioned
before, there is a correspondence between bounded harmonic functions and
$L^\infty(G^{\mathbb{N}}, \I, \prob_x)$ and for that reason the
space of
bounded harmonic functions is sometimes also referred to as the Poisson
boundary. Theorem~\ref{T:mainmainthm} shows that if we circle pack $G$ in
$U$, then $\partial U$ is a representation of the Poisson boundary. More
precisely, let $f:G^{\mathbb{N}} \to\partial U$ be the measurable function
(defined $\P_x$-almost everywhere) $f(\{x_n\})= \lim x_n$ and let
$\mathcal{B} \subset\I$ be the pull back $\sigma$-algebra on
$G^{\mathbb{N}}$. Then $\mathcal{B}$ and $\I$ are in fact
equivalent, that is,
for any $A \in\I$ there exists $B \in\mathcal{B}$ such that the measure
of $A \bigtriangleup B$ is zero.

The Martin boundary \cite{Dynkin,Martin,woess} is another concept of a
boundary of a Markov chain, associated with the space of positive harmonic
functions. While the Poisson boundary is naturally defined as a measure
space, the Martin boundary is a topological space. It is well known (see
Chapter~24 of \cite{woess}) that the Poisson boundary may be obtained by
endowing the Martin boundary with a suitable measure. Hence, in
addition to
its intrinsic interest, the Martin boundary studied here will provide more
information and will yield Theorem~\ref{T:mainmainthm} rather abstractly.

An illustrative example of the difference between the boundaries is the
following. Let $G$ be the graph obtained from $\Z^3$ by connecting its root
to a disjoint one-sided infinite path. It is possible for a positive
harmonic function to diverge only along the path. Thus, the Martin boundary
will consist of two points (corresponding to the two ``infinities'' of
$G$), however, since the simple random walk has probability $0$ of staying
in the infinite path forever, and $\Z^3$ has no nonconstant bounded
harmonic functions, the Poisson boundary will have all its mass on one of
the points.

Let us formally define the Martin boundary. Let $x_0$ be an arbitrary fixed
root of $G$ and $M(x,y)$ be the \emph{Martin kernel}
\[
M(x,y) = \frac{ G(x,y)}{ G(x_0, y) },
\]
where $G(x,y) = \E_x[\# \mbox{ visits to $y$}]$ is the Green
function. For
any fixed $y$, the function $M(\cdot,y)$ is a positive function that is
harmonic everywhere except at $y$. Hence, if for some sequence $y_n$, such
that the graph distance between $x_0$ and $y_n$ tends to infinity, the
functions $M(\cdot, y_n)$ converges pointwise, then the limit is a positive
harmonic function on $G$. The \emph{Martin boundary} is defined to be
the set
$\cM$ of all such limit points, endowed with the pointwise convergence
topology.

A positive harmonic function $h:V \to\R$ such that $h(x_0)=1$ is called
\emph{minimal} if for any positive harmonic function $g$ such that $g(x)
\leq h(x)$ for all $x$, then $g=ch$ for some constant $c>0$. The minimal
functions are the extremal points of the convex set of positive harmonic
functions, normalized to have $h(x_0)=1$. By Choquet's theorem and
\cite{woess}, Theorem~24.8, it follows that any positive harmonic function
$h$ can be written as $h=\int g \,d\mu(g)$ for some measure $\mu$ depending
on $h$, and supported on the set of minimal harmonic functions. If we
normalize so that $h(x_0)=1$, then $\mu$ is a probability measure.

\begin{theorem}
\label{T:martinbdry}
Let $G=(V,E)$ be a transient bounded degree, one-ended triangulation and
let $P$ be a circle packing of $G$ with $\carr(P)=U$. Then:
\begin{longlist}[(1)]
\item[(1)] For a sequence $y_n\in V$ we have that $M(\cdot,y_n)$ converges
pointwise if and only if $y_n$ converges in $\R^2$ (in particular, the
limit only depends on $\lim y_n$).
\item[(2)] If $y_n \to\xi\in\partial U$, then $\lim M(\cdot, y_n)$ is a
minimal harmonic function.
\item[(3)] The map $\xi\mapsto\lim M(\cdot, y_n)$, where $y_n \to\xi$, is
a homeomorphism.
\end{longlist}
In particular, the Martin boundary is homeomorphic to $\partial U$.
\end{theorem}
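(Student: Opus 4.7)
The plan is to prove the three assertions in order, with the core work being in assertion (1). Fix $\xi \in \partial U$ and a sequence of vertices $y_n$ with $y_n \to \xi$ in $\R^2$. Using the identity
\[ M(x,y) = \frac{G(x,y)}{G(x_0,y)} = \frac{\P_x(\tau_y < \infty)}{\P_{x_0}(\tau_y < \infty)}, \]
I would establish a boundary-Harnack type comparison: for every $\varepsilon>0$ there is a Euclidean disc $D$ around $\xi$ such that whenever $y_n$ lies sufficiently deep inside $D \cap U$,
\[ \left| \frac{\P_{x_1}(\tau_{y_n}<\infty)}{\P_{x_2}(\tau_{y_n}<\infty)} - 1 \right| < \varepsilon \qquad \text{for all } x_1,x_2 \in D \cap V. \]
The required inputs are (i) any walk reaching $y_n$ must first hit a still finer neighborhood of $\xi$; (ii) the wide-arc exit estimate advertised in the abstract, applied to shrinking circles around $\xi$, forces the random walk's hitting distribution on small boundary pieces to concentrate and mix; and (iii) an interior Harnack inequality, available from bounded degree together with the good-embedding estimates, to compare Green functions at nearby vertices. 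Conditioning $\P_x(\tau_{y_n}<\infty)$ on the first entry point of $D$, the comparison yields pointwise convergence of $M(x,y_n)$ to a harmonic function $h_\xi(x)$ depending only on $\xi$. If instead $y_n$ fails to converge in $\overline U$, I would pass to two subsequences with distinct Euclidean limits $\xi\neq\xi'$; granted parts (2)--(3) below, the resulting subsequential limits are distinct minimal harmonic functions, so $M(\cdot,y_n)$ cannot converge.

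For (2), I would argue via the Doob $h$-transform. Let $X^{h_\xi}$ be the Markov chain with transitions $p^{h_\xi}(x,y) = p(x,y) h_\xi(y)/h_\xi(x)$. Intuitively $h_\xi(x)$ captures how likely $X_\infty$ is to land near $\xi$ when starting from $x$, so the $h_\xi$-chain is the original walk conditioned to exit at $\xi$; in particular $X_n^{h_\xi} \to \xi$ almost surely, a statement I would justify by combining the convergence theorem of \cite{BS2} with the Radon-Nikodym interpretation of $h_\xi$. Any positive harmonic function $g$ with $g \leq h_\xi$ then produces a bounded harmonic function $g/h_\xi$ for the $h_\xi$-chain, and almost sure convergence to a single boundary point forces the Poisson boundary of that chain to be trivial, so $g/h_\xi$ is constant and $h_\xi$ is minimal.

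For (3), injectivity follows because the $h_\xi$-chain and $h_{\xi'}$-chain converge to different boundary points for $\xi\neq\xi'$, so $h_\xi \neq h_{\xi'}$. Continuity of $\xi \mapsto h_\xi$ in the Martin topology follows by a diagonal argument using (1): given $\xi_k\to\xi$, pick $y_{n_k}$ close enough to $\xi_k$ that $|M(\cdot,y_{n_k}) - h_{\xi_k}|$ is small on a prescribed finite set; then $y_{n_k}\to\xi$ in $\R^2$, and part (1) gives $M(\cdot,y_{n_k})\to h_\xi$. A continuous bijection from the compact space $\partial U$ to the Hausdorff Martin boundary is automatically a homeomorphism.

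The main obstacle I foresee is the boundary-Harnack comparison in step (1): circle packings can degenerate near $\partial U$, with radii shrinking to zero arbitrarily fast, so the argument cannot rely on Euclidean regularity of the carrier near its boundary, but must use only the intrinsic random-walk inputs --- wide-arc exit, exit-time bounds, and an interior Harnack inequality --- furnished by the good embedding.
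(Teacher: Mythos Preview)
Your outline has the right architecture, but the two load-bearing steps both have genuine gaps.

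In part (1), the displayed inequality is false as written: take $x_1=y_n$, so $\P_{x_1}(\tau_{y_n}<\infty)=1$, while $\P_{x_2}(\tau_{y_n}<\infty)$ can be arbitrarily small for other $x_2\in D\cap V$. Presumably you intend $x_1,x_2$ to range only over first-entry points of $D$, but even then the tools you name---the wide-arc exit estimate and an interior Harnack inequality---only yield that the ratio is \emph{bounded}, not that it tends to $1$. A bounded ratio tells you that any two subsequential limits of $M(\cdot,y_n)$ are comparable, not that they coincide; you need either an iteration over scales or an additional argument to upgrade boundedness to equality. The paper handles this differently: it first establishes a \emph{boundary} Harnack principle (deduced from volume doubling and a Poincar\'e inequality on the cable process, not from the wide-arc estimate) to show every subsequential limit lies in the class $\cH_\xi$ of normalised positive harmonic functions that are bounded off any neighbourhood of $\xi$ and satisfy $h(X_n)\to 0$ a.s. It then shows $\cH_\xi$ is a singleton by Ancona's bootstrapping trick: if $c=\sup_{h_1,h_2\in\cH_\xi,\,x} h_1(x)/h_2(x)$ were strictly bigger than $1$, then $h_3=(ch_1-h_2)/(c-1)\in\cH_\xi$ and $h_2\le ch_3$ gives $(2c-1)h_2\le c^2 h_1$, contradicting the definition of $c$ since $c^2/(2c-1)<c$. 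Your sketch contains no analogue of this step.

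In part (2), the claim that the $h_\xi$-transformed walk converges almost surely to $\xi$ is exactly the heart of the matter, and your justification is circular: the ``Radon--Nikodym interpretation of $h_\xi$'' you invoke presupposes that $h_\xi$ arises as the density of harmonic measure at $\xi$, which is essentially the representation theorem you are ultimately proving. What is actually needed---and what the paper extracts directly from the boundary Harnack principle applied to Green's functions $G(\cdot,x)$ and $G(\cdot,x_0)$---is that $h_\xi$ is bounded outside every neighbourhood of $\xi$ and that $h_\xi(X_n)\to 0$ a.s.\ under the \emph{unconditioned} walk. With those two facts in hand, minimality drops out without any $h$-transform: if $0\le g\le h_\xi$ then $g(\cdot)/g(x_0)$ inherits both properties, hence lies in the singleton $\cH_\xi$, hence equals $h_\xi$. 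Your diagonal argument for part (3) is fine and matches the paper's.
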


The limit $\lim_{y\to\xi} M(\cdot, y)$ is denoted by $M_\xi$.
Thus, for any
positive harmonic function $h$ there is some measure $\mu$ on
$\partial U$,
so that $h = \int_{\partial U} M_\xi \,d\mu(\xi)$.

A similar characterization of the Poisson boundary of planar graphs in
terms of their square tiling was recently obtained by Georgakopoulos
\cite{Agelos}. His results allow him to characterize the Poisson boundary
for a somewhat more general set of graphs, namely, of bounded degree
\emph{uniquely absorbing} planar graphs. The analysis in this paper of random
walk via circle packings and other embeddings requires a completely
different set of tools and in return allows us to characterize the Martin
boundary with no additional cost.

\subsection{Good embeddings of planar graphs}

Recall that a \emph{proper embedding} of a planar graph is a map sending
the vertices to points in the plane and edges to continuous curves
connecting the corresponding vertices such that no two edges cross. If
each edge is mapped to a straight line we call it an \emph{embedding
with straight lines}. Given a circle packing of a graph $G$, we may
obtain such an embedding by mapping vertices to the corresponding
circle's center and edges to straight lines between the corresponding vertices.

We will prove our results for more general embeddings than the one
obtained from circle packing. The setting below has risen in the study
of critical 2D lattice
models and was formalized by Chelkak \cite{Chelkak}. Let $G=(V,E)$ be an
infinite, connected, simple planar graph together with an embedding with
straight lines. As before, we identify a vertex $v$ with its image in the
embedding. We write $|u-v|$ for the Euclidean distance between points in
the plane. For constants $D\in(1,\infty)$ and $\eta>0$, we say that the
embedding is \emph{$(D,\eta)$-good} if it satisfies:
\begin{longlist}[(a)]
\item[(a)]\emph{No flat angles.} For any face, all the inner angles are at most
$\pi- \eta$. In particular, all faces are convex, there is no outer face
and the number of edges in a face is at most $2\pi/\eta$.
\item[(b)]\emph{Adjacent edges have comparable lengths.} For any two adjacent
edges $e_1=(u,v)$ and $e_2=(u,w)$, we have that $|u-w|/|u-v| \in[D^{-1},
D]$.
\end{longlist}

We say that an embedding is \emph{good} if it has straight lines and it is
$(D,\eta)$-good for some $D,\eta$. A classical lemma of Rodin and Sullivan
\cite{RS} (known as the \emph{Ring lemma}) asserts that the ratio between
radii of tangent circles in a circle packing of a bounded degree
triangulation is bounded above and away from $0$. We immediately get the
following.

\begin{proposition}\label{P:CPisgood}
Any circle packing of a bounded degree triangulation is $(D,\eta)$-good
for some $D$ and $\eta$ that only depend on the maximum degree.
\end{proposition}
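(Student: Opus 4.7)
The plan is to deduce both conditions $(a)$ and $(b)$ of the $(D,\eta)$-good definition directly from the Ring Lemma of Rodin--Sullivan, which bounds the ratio of radii of tangent circles in a circle packing of a bounded-degree triangulation in terms of the maximum degree alone. Let $d$ be the maximum degree of $G$ and let $K=K(d)$ be the constant from the Ring Lemma, so that for any two tangent circles $C_u, C_v$ in the packing we have $r_v/r_u \in [K^{-1},K]$.

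For condition $(b)$, I would observe that for edges $e_1=(u,v)$ and $e_2=(u,w)$ sharing the vertex $u$, the embedded lengths are $|u-v|=r_u+r_v$ and $|u-w|=r_u+r_w$. Since $r_v$ and $r_w$ are both within a factor $K$ of $r_u$, the ratio $(r_u+r_v)/(r_u+r_w)$ lies in $[(1+K)^{-1}(1+K^{-1}),\,(1+K)(1+K^{-1})^{-1}]$, which is a bounded interval depending only on $d$; this gives the constant $D$.

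For condition $(a)$, every face of a triangulation is a triangle formed by three mutually tangent circles with radii $r_1,r_2,r_3$ and side lengths $r_1+r_2,\,r_2+r_3,\,r_1+r_3$. Applying the Ring Lemma to the pairs of tangent circles shows that all three radii lie within a factor $K$ of each other, so the three side lengths of the triangle are pairwise comparable up to a constant depending only on $d$. An elementary computation (law of cosines) then shows that every angle of such a triangle is bounded away from $0$ and from $\pi$ by some $\eta=\eta(d)>0$. The remaining parts of $(a)$ are immediate: all faces are triangles, so each face has exactly three edges (certainly at most $2\pi/\eta$), and a one-ended triangulation circle-packed in $U$ or in the plane has no outer face in the relevant sense.

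There is no real obstacle here --- the Proposition is essentially a restatement of the Ring Lemma combined with a trivial planar-geometric computation. The only point that requires any care is checking that the bounds produced depend only on the maximum degree and not on the specific structure of the triangulation, but this is exactly what the Ring Lemma provides.
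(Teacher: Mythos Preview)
Your proposal is correct and is exactly the approach the paper has in mind: the paper states the proposition immediately after recalling the Ring Lemma and simply says ``We immediately get the following,'' leaving the details you supplied implicit. Your argument for $(b)$ via $|u-v|=r_u+r_v$ and for $(a)$ via the law of cosines on a triangle of three mutually tangent circles is precisely the routine verification that the paper omits.
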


In a similar fashion to the circle packing setting, we define the carrier
of the embedding of $G$, denoted by $\carr(G)$, to be the union of all the
(closed)
faces of the embedding. Note that if $G$ is a one-ended triangulation,
then $\carr(G)$ is always an open simply connected set in the plane.
Lastly, suppose that the edges of the graph are equipped with positive
weights $\{w_e\}_{e\in E}$ and consider the weighted random walk $\{
X_n\}$
defined by $P(X_1 = u | X_0 = v) = w_{(v,u)}/w_v$ for any edge $(u,v)$,
where $w_v = \sum_{u: u \sim v} w_{(u,v)}$. A function $h:V\to\R$ is
harmonic with respect to the weighted graph when
%
\begin{equation}
\label{harmonicdef} h(v) = \sum_{u: u \sim v} \frac{w_{(u,v)}} {w_v}
h(u),
\end{equation}
or in other words, when $h(X_n)$ is a martingale. The general version of
Theorems~\ref{T:mainmainthm} and~\ref{T:martinbdry} is now stated in a straightforward
manner.

\begin{thms*}\label{thms1}
Let $G=(V,E)$ be
a bounded degree planar graph with a good embedding with straight lines
such that $\carr(G)=U$. Assume that $G$ is equipped with positive edge
weights bounded above and away from $0$. Then the conclusions of
Theorems \ref{T:mainmainthm} and \ref{T:martinbdry} hold verbatim.
\end{thms*}

Theorems \ref{T:mainmainthm} and \ref{T:martinbdry} are immediate corollaries of this
statement together with Proposition~\ref{P:CPisgood}.

\subsection{Harmonic measure and exit time of discrete balls}

In the following, let $G=(V,E)$ be a planar graph with a good
embedding. A
discrete domain $S$ is a subset of $V$ along with the induced edges
$E(S) =
(S \times S) \cap E$. The boundary of $S$, denoted $\partial S$ is the
external vertex boundary, that is, all vertices not in $S$ with a
neighbor in
$S$. For $u \in\R^2$ we denote by $\Beuc(u,r)$ the Euclidean ball
$\{ y
\in\R^2 : |u-y| \leq r\}$ of radius $r$ centred at $u$, and the discrete
Euclidean ball $\Veuc(u,r)$ is the vertex set
\[
\Veuc(u,r) = V \cap\Beuc(u,r).
\]

As before, assume that the edges are equipped with positive weights and
consider the weighted random walk $\{X_n\}$. For $A \subset V$ let
$\tau_A$
be the first hitting time of $A$, that is, $\tau_A = \min\{ n : X_n
\in A
\}$ or $\infty$ if $A$ is never hit. The following two theorems answer a
question recently posed by Chelkak (\cite{Chelkak}, page 9).

\begin{theorem}
\label{T:exit_arg}
For any positive constants $D,\eta$ there exists $c=c(D,\eta)>0$ with the
following. Assume that $G$ is a graph with a $(D,\eta)$-good embedding,
and all edges weights in $[D^{-1}, D]$. Then for any vertex $u$, any $r
\geq0$ such that $\Beuc(u,r) \subset\carr(G)$ and any closed interval
$I\subset\R/(2\pi\Z)$ of length $\pi-\eta$ we have
\[
\prob_u \bigl( \arg( X_{T_r} - u ) \in I \bigr) \geq c,
\]
where $T_r = \tau_{\partial\Veuc(u,r)}$ is the first exit time from
$\Veuc(u,r)$.
\end{theorem}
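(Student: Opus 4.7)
The plan is to bound from above the probability of exiting through the complementary arc $I^c := (\R/2\pi\Z) \setminus I$ (of length $\pi+\eta$) by constructing a discrete supersolution. Writing $\phi(v) = \prob_v(\arg(X_{T_r}-u) \in I^c)$, the function $\phi$ is discretely harmonic on $\Veuc(u,r)$ with prescribed boundary values $1_{I^c}$ on the exit vertices; the goal is to show $\phi(u) \leq 1 - c$. The natural candidate dominator is the continuous Poisson integral $\Phi$ on the disc $\Beuc(u,r)$ of the indicator of a slightly thickened arc $\widetilde{I^c}$ (enlarged by $\eta/4$ on each side). By the explicit Poisson formula, $\Phi(u) = |\widetilde{I^c}|/(2\pi) < 1$, and $\Phi \geq \phi$ on $\partial \Veuc(u,r)$ by construction.

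To compare $\Phi$ and $\phi$ in the interior, I would Taylor expand $\Phi$ at each vertex $v$: the discrete Laplacian
\[
L\Phi(v) := \sum_{w \sim v}\frac{w_{(v,w)}}{w_v}\big(\Phi(w) - \Phi(v)\big) = \langle\mu(v),\nabla\Phi(v)\rangle + \tfrac12\,\mathrm{tr}\big(C(v)\,\nabla^2\Phi(v)\big) + O\!\big(\rho(v)^3\|D^3\Phi\|\big),
\]
where $\mu(v)$ and $C(v)$ denote the one-step mean displacement and covariance matrix of the walk from $v$, and $\rho(v)$ is the local edge length. Summing against the discrete Green function $G$ on $\Veuc(u,r)$ yields $|\phi(u)-\Phi(u)| \leq \sum_v G(u,v)|L\Phi(v)|$. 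The companion exit-time estimate of order $r^2$ bounds $\sum_v G(u,v)$, and together with smoothness of $\Phi$ away from the endpoints of $\widetilde{I^c}$ this should close the bound to give $\phi(u) \leq \Phi(u) + o(1)$.

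The main obstacle is two-fold. First, since the walk is not isotropic, the second-order term $\mathrm{tr}(C(v)\nabla^2\Phi(v))$ does not vanish merely from the classical harmonicity of $\Phi$, which only forces $\mathrm{tr}(\nabla^2\Phi)=0$; controlling it requires either a choice of $\Phi$ locally adapted to the covariance, or a subtler averaging argument exploiting cancellation in the traceless part $C(v)-\tfrac12\mathrm{tr}(C(v))\cdot \mathrm{Id}$ along trajectories. Second, $\|D^k\Phi\|$ blows up near the endpoints of $\widetilde{I^c}$; I would handle this by splitting $\Veuc(u,r)$ into a bulk region where $\Phi$ is smooth on a scale much larger than $\rho(v)$, and two small angular wedges near the endpoints of $I$, estimating the exit probability into those wedges separately using uniform ellipticity (the no-flat-angle condition guarantees a neighbor in every angular window of width $\pi-\eta$, reached with probability bounded below using the weight assumptions). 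As a cleaner backup if the supersolution route becomes too delicate, I would attempt a flow/capacity argument: construct an explicit unit flow from $u$ to the boundary vertices with angle in $I$, with energy of order $1$, by routing it through a fan of roughly-radial paths across the sector of $I$; the Dirichlet principle then gives a lower bound on the exit probability through $I$ via the effective-conductance interpretation.
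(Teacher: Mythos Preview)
Your supersolution route has a genuine gap at exactly the point you flag as an ``obstacle'': the first- and second-order error terms are not perturbations but are of the same size as the main term. Concretely, in a $(D,\eta)$-good embedding the one-step drift $\mu(v)$ can have magnitude comparable to the local edge length $\rho(v)$ (the walk is \emph{not} approximately centred), so $|\langle\mu(v),\nabla\Phi(v)\rangle| \asymp \rho(v)/r$ in the bulk. The exit-time estimate you invoke is $\sum_v G(u,v)\rho(v)^2 \asymp r^2$, not a bound on $\sum_v G(u,v)$; if the local scale is $\rho$, the latter is of order $r^2/\rho^2$, and the accumulated first-order error is $\sum_v G(u,v)\rho(v)/r \asymp r/\rho$, which diverges as $\rho\to 0$. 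Similarly, the anisotropic part of $C(v)$ has entries of order $\rho(v)^2$ while $\|\nabla^2\Phi\|\asymp r^{-2}$, so the accumulated second-order error is $\sum_v G(u,v)\rho(v)^2/r^2 \asymp 1$, i.e.\ of the same order as $\Phi(u)$ itself. No choice of a single classical $\Phi$ can absorb these, and ``averaging along trajectories'' would require exactly the kind of a priori control on the walk that the theorem is meant to establish. This is not a technicality: the paper singles out precisely this issue (the Euclidean trajectory is not a martingale and can carry a systematic local drift) as the reason for abandoning direct comparison with Brownian motion.

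Your backup flow argument also does not close as stated: a unit flow from $u$ to the boundary vertices in the sector $I$ bounds the effective conductance $C_{\mathrm{eff}}(u,\partial\Veuc(u,r)\cap\{\arg\in I\})$, but exit \emph{probability} through a subset of the boundary is the harmonic measure, not a conductance ratio, and there is no monotone relation that turns a flow into a lower bound on $\P_u(\arg(X_{T_r}-u)\in I)$ without additional input (e.g.\ an upper bound on the conductance to the complementary arc, which you have not constructed). The paper's proof avoids all of this by passing to the cable process, proving volume doubling and a Poincar\'e inequality for the associated Dirichlet form, and extracting a heat-kernel \emph{lower} bound $q_t^A(u,y)\geq c/m(\Beuc(u,\sqrt t))$; integrating this over the part of a wide cone lying in an annulus gives a uniform positive probability of being there at time $r^2$ without having exited, and iterating such steps drives the walk through the sector $I$ to the boundary.
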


Note that for smaller intervals of arguments the statement above may be
false; for example, the left-hand side is $0$ if $\partial\Veuc(u,r)$
contains no vertex in these directions.

For a vertex $u\in V$, we denote its \emph{radius of isolation} by $r_u =
\min_{V\setminus\{u\}} \{|u-v|\}$. We use $f\asymp g$ when there is some
$C=C(D,\eta)$ so that $C^{-1} g \leq f \leq C g$.

\begin{theorem}
\label{T:exit_time}
For any positive constants $D,\eta$ there exists $C=C(D,\eta)\geq1$ with
the following. Assume that $G$ is a graph with a $(D,\eta)$-good
embedding and all edges weights are in $[D^{-1}, D]$. Then for any
vertex $u$ and any $r \geq r_u$ with $\Beuc(u, C r)\subset\carr(G)$
we have
\[
\E_u \sum_{t=0}^{T_r}
r^2_{X_t} \asymp r^2,
\]
where $T_r = \tau_{\partial\Veuc(u,r)}$ is the first exit time from
$\Veuc(u,r)$.
\end{theorem}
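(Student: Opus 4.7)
The plan is to apply optional stopping to the quadratic martingale associated with $|X_t - u|^2$, identify the compensator with $\sum_s \sigma^2(X_s) \asymp \sum_s r_{X_s}^2$, and invoke \cref{T:exit_arg} to control the signed drift. Set $p(v,y) = w_{(v,y)}/w_v$ and at every vertex $v$ define
\begin{equation*}
  d(v) := \sum_{y \sim v} p(v,y)(y - v), \qquad \sigma^2(v) := \sum_{y \sim v} p(v,y)\,|y - v|^2.
\end{equation*}
A $(D,\eta)$-good embedding with edge weights in $[D^{-1},D]$ forces every edge at $v$ to have length in $[r_v, D r_v]$, so $\sigma^2(v) \asymp r_v^2$ and $|d(v)| \le D r_v$. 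A direct expansion of $|X_{t+1} - u|^2 - |X_t - u|^2$ shows that
\begin{equation*}
  M_t := |X_t - u|^2 - \sum_{s<t} \bigl[\sigma^2(X_s) + 2(X_s - u) \cdot d(X_s)\bigr]
\end{equation*}
is a martingale, and optional stopping at $T_r$ (after the usual truncation at $T_r \wedge N$) yields
\begin{equation}\label{eq:plan-identity}
  \E_u |X_{T_r} - u|^2 \;=\; \E_u \sum_{s=0}^{T_r - 1} \sigma^2(X_s) \;+\; 2\, \E_u \sum_{s=0}^{T_r - 1} (X_s - u) \cdot d(X_s).
\end{equation}

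I would first establish that the left-hand side of \eqref{eq:plan-identity} is of order $r^2$. The lower bound $\E_u |X_{T_r} - u|^2 \ge r^2$ is immediate from $|X_{T_r} - u| > r$. For the upper bound, the hypothesis $\Beuc(u, Cr) \subset \carr(G)$ together with edge-length comparability (propagated along adjacent faces from $u$ outward, using that faces are convex with at most $2\pi/\eta$ sides) gives $r_v = O(r)$ for every $v \in \Veuc(u,r)$, whence the overshoot $|X_{T_r} - u| - r$ is at most $D\, r_{X_{T_r - 1}} = O(r)$.

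The crux is to bound the drift term in \eqref{eq:plan-identity} by $\tfrac12 \E_u \sum_s \sigma^2(X_s) + O(r^2)$; combined with the previous step, the equivalence $\sigma^2(v) \asymp r_v^2$, and $r_{X_{T_r}}^2 = O(r^2)$ for the terminal term, this closes both directions of $\asymp$. Because $\E_u T_r$ may be infinite --- the walk can revisit vertices of arbitrarily small $r_v$ many times --- any crude Young-type bound on $|2(X_s - u) \cdot d(X_s)|$ that invokes $T_r$ is useless. My plan is instead to partition the path into excursions between the dyadic Euclidean circles $\{|{\cdot} - u| = 2^k r_u\}$ around $u$; inside each scale-$k$ annulus, \cref{T:exit_arg} forces the exit direction from any ball $\Beuc(v,\rho) \subset \carr(G)$ to be $c$-uniform on arcs of length $\pi - \eta$. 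Averaging $(X_s - u) \cdot d(X_s)$ over such an excursion exploits this quasi-isotropy to make the signed drift cancel up to a constant multiple of the corresponding variance contribution $O((2^k r_u)^2)$; summing the geometric series over $k \le \log_2(r/r_u)$ produces the desired $O(r^2)$ total.

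The hardest part will be making this drift-cancellation rigorous: the random walk in a good embedding is genuinely not a martingale, with drift of the same order as the jump size, so classical $L^2$ exit-time machinery for martingales does not apply. The essential role of \cref{T:exit_arg} in the proof is to supply the quasi-isotropy that restores the correct $r^2$ scaling of the natural time $\sum r_{X_t}^2$ in the presence of this drift.
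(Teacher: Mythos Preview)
Your optional-stopping identity \eqref{eq:plan-identity} is correct, and the reduction to controlling the drift sum is the right diagnosis of where the difficulty lies. But the proposed mechanism for that control does not work. \cref{T:exit_arg} gives only a \emph{lower} bound on the probability that the walk exits a ball through any fixed arc of width $\pi-\eta$; it says nothing about upper bounds, and in particular it does not force the exit law to be close to uniform. No cancellation of $(X_s-u)\cdot d(X_s)$ follows from it. More fundamentally, $d(v)$ is a deterministic vector determined by the local geometry at $v$, and nothing in the definition of a good embedding prevents $d(v)$ from pointing in a fixed direction throughout a large region; the exit-direction statement of \cref{T:exit_arg} is about where the walk ends up, not about the pointwise drift along the way. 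Your dyadic scheme also leaves uncontrolled the \emph{number} of excursions at each scale, so even a per-excursion bound of $O((2^k r_u)^2)$ (which you have not established) would not sum to $O(r^2)$.

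The paper takes a completely different route and never invokes \cref{T:exit_arg} here. It passes to the cable process $Z_t$ (Brownian motion on the metric graph $X$), for which the drift obstruction disappears: volume doubling and the Poincar\'e inequality on $(X,d_0,m)$ feed into Sturm's machinery to give heat kernel lower bounds (\cref{T:heat}). Integrating these yields $\P_x(T_r^Z>r^2)\ge c$ and $\P_x(T_r^Z>r^2)\le 1-c$ uniformly for $x\in\Beuc(u,r)$, hence $\E_u T_r^Z\asymp r^2$. Finally, since the vertex trace of $Z_t$ is the discrete walk and the expected cable time between consecutive vertices is $\asymp r_{X_t}^2$ by \eqref{timetomove}, one gets $\E_u T_r^Z\asymp\E_u\sum_{t\le T_r} r_{X_t}^2$. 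The geometric input that makes the upper bound true is the Poincar\'e inequality, not exit-direction isotropy; your approach would need to recover that content by some other means.
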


The reader may wonder why we require $\Beuc(x_0,Cr) \subset\carr(G)$,
while the theorem only talks about the time to exit the smaller
$\Beuc(x_0,r)$. This is an artifact of our proof, and the stronger
requirement can indeed be removed. This requires showing that it is
possible to ``extend'' the embedding to a good embedding of a larger graph
with carrier $\R^2$. This is indeed possible, and we plan to address this
in a future paper.

\subsection{About the proofs and the organization of the paper}

We would like to compare the random walk on a well-embedded graph to
Brownian motion, and certainly our results above justify such a
comparison. However, the simple random walk on a good embedding can behave
rather irregularly. For example, its Euclidean trajectory is not a
martingale and can have a local drift. The random walk is also much slower
when traversing areas of short edges compared to areas of longer edges. To
fix the second problem, we could study the variable speed random walk which
waits at each vertex an amount of time comparable to $r_x^2$, or to the
area of one of the faces containing the vertex (a good embedding guarantees
that all faces sharing a vertex have comparable area). Instead, we use the
\emph{cable process} on the graph, which can be thought of as Brownian
motion on the embedding (see Section~\ref{sec-cableprocess}). The vertex
trajectory of this process has the same distribution as the simple random
walk, so the harmonic measures do not change.

A central step in this work is showing that well-embedded graphs satisfy
volume doubling and a Poincar\'e inequality with respect to the Euclidean
metric (rather than the graph metric). This is done in
Section~\ref{sec-cableprocess}. The work of Sturm \cite{Sturm} (which
applies in
the very general setting of local Dirichlet spaces) then enables us to
obtain various corollaries: an elliptic Harnack inequality
(Theorem~\ref{cableehi}) and heat kernel estimates (see Theorem~\ref
{T:heat}). These
already give us enough control to prove Theorems \ref
{T:exit_arg} and \ref{T:exit_time} in
Section~\ref{sec-harmonic}.

To prove Theorems \ref{T:mainmainthm} and \ref{T:martinbdry}, we require a
boundary Harnack
inequality (see Theorem~\ref{bhp}). Roughly speaking, this states that two
positive harmonic functions that vanish on most of the boundary of the
domain do so in a uniform way. In our setting, the boundary Harnack
inequality is a consequence of the volume doubling and Poincar\'e
inequality, as shown in \cite{LS}, following an argument of Aikawa
\cite{A}
that originates in the work of Bass and Burdzy \cite{BB}. Given the
boundary Harnack inequality, it is possible to prove Theorem~\ref
{T:mainmainthm}
by constructing an explicit coupling between two random walks starting at
two different points conditioned to converge to some $\xi\in\partial U$
(by conditioning that the random walk is swallowed in a small neighborhood
and taking a weak limit) so that with probability $1$ their traces
coincides after a finite number of steps. This coupling is constructed by
showing that for any annulus of constant aspect ratio around $\xi$ the
conditioned random walks have a positive chance to meet.

We do not use this proof approach and instead use the more succinct
approach of Aikawa \cite{A}. His argument (following Jerison and Kenig
\cite{JK}) shows how the characterization of the Martin boundary of
Brownian motion on a uniform domain follows from the boundary Harnack
principle. Our argument in Section~\ref{sec-martin} is very similar to
\cite{A}
except for the complication that our process is not a martingale. Thus, a
separate argument is necessary to show the convergence of the random walk
to the boundary and that the distribution of the limit is nonatomic.

\section{Preliminaries}\label{S:prelim}

We begin with some geometric consequences of having a good embedding. In
this section, we assume that we are given a $(D,\eta)$-good embedding
of a
graph $G$.

\begin{figure}[b]

\includegraphics{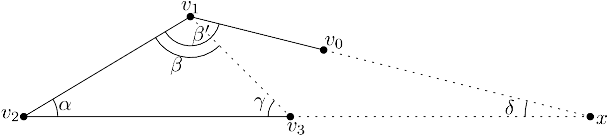}

\caption{No thin acute angles: In a good embedding, the angle $\alpha
$ must be
at least $D^{-1} \sin(\eta/2)$.}
\label{fig.noacute}
\end{figure}

\begin{lemma}[(No thin acute angles)]
\label{L:noacute}
The angle between any two adjacent edges is at least $D^{-1}
\sin(\eta/2)$.
\end{lemma}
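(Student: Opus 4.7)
The plan is to form an auxiliary triangle $T = uvw$ from the two adjacent edges $e_1 = (u,v)$ and $e_2 = (u,w)$ together with the straight-line segment $vw$, and derive the bound by combining an angle-sum argument inside $T$ with the law of sines. I will assume without loss of generality that $e_1$ and $e_2$ are consecutive in the cyclic order of edges around $u$, bounding a common face $F$; for non-consecutive pairs the angle between the edges only grows.

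The first key step is to show that each of the three interior angles of $T$ is at most $\pi - \eta$. For the angle at $u$ this coincides with the interior angle of $F$ at $u$, and is thus at most $\pi - \eta$ by condition (a). For the angle at $v$: convexity of $F$ implies that the diagonal $vw$ lies inside $F$, so the ray from $v$ toward $w$ falls within the interior cone at $v$ bounded by the two face-edges of $F$ at $v$; hence $\angle uvw$ is at most the interior angle of $F$ at $v$, again at most $\pi - \eta$. The same bound holds symmetrically at $w$.

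Combined with the fact that the three angles of $T$ sum to $\pi$, this forces the second-largest angle $\beta$ of $T$ (the ``middle'' one) to satisfy $\beta \geq \eta/2$: the largest angle is at most $\pi - \eta$, so the remaining two sum to at least $\eta$, and the larger of those two is at least $\eta/2$. Since moreover $\beta \leq \pi - \eta$, we have $\sin\beta \geq \sin(\eta/2)$.

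The final step is to apply the law of sines to $T$. In the central case where $F$ is a triangular face, $vw$ is itself a graph edge adjacent to both $(u,v)$ and $(u,w)$, so condition (b) gives that all three side-lengths of $T$ are pairwise comparable up to a factor of $D$; the law of sines then yields
\[
\sin\alpha \;\geq\; D^{-1}\sin\beta \;\geq\; D^{-1}\sin(\eta/2),
\]
and since $\sin\alpha \leq \alpha$ on $[0,\pi]$ we conclude $\alpha \geq D^{-1}\sin(\eta/2)$. I expect the main obstacle to be the case when $F$ has four or more vertices: then $vw$ is only a diagonal of $F$ and condition (b) does not directly control $|vw|$. The intended resolution is to replace the triangle argument by a convexity argument using the neighbour $v_2$ of $v$ along $\partial F$, whose distance from $v$ is controlled by (b) and which, by convexity of $F$, must lie in the cone at $u$ of angular opening $\alpha$; together with the bound $\phi_v \leq \pi - \eta$ on the interior angle of $F$ at $v$, which prevents $(v,v_2)$ from too closely continuing the direction of $(u,v)$, these constraints reproduce quantitatively the same lower bound on $\alpha$.
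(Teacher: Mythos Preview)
Your argument for the triangular-face case is correct and clean: the three interior angles of $T=uvw$ are each at most $\pi-\eta$, the middle one is at least $\eta/2$, and since all three sides are genuine edges pairwise adjacent, condition~(b) plus the law of sines finishes.

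The gap is in the non-triangular case. The three constraints you isolate --- that the next face-vertex $v_2$ lies in the cone at $u$ of opening $\alpha$, that $|vv_2|\asymp_D |uv|$, and that the interior angle $\phi_v=\angle uvv_2$ is at most $\pi-\eta$ --- do \emph{not} by themselves force $\alpha$ to be bounded below. Concretely, place $u=(0,0)$, $v=(1,0)$, let $\alpha$ be tiny, and take $v_2=(1-D^{-1},\varepsilon)$ with $\varepsilon$ small enough that $\arg v_2<\alpha$. Then $v_2$ is in the cone, $|vv_2|\approx D^{-1}|uv|$, and $\phi_v\approx D\varepsilon$ is tiny (so certainly $\le \pi-\eta$). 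All three constraints hold while $\alpha$ is arbitrarily small. In the triangle $uvv_2$ the angle at $v_2$ is close to $\pi$, so $\sin$ of it is tiny and the law of sines gives nothing. The upper bound $\phi_v\le\pi-\eta$ only prevents $v_2$ from lying near the ray from $v$ \emph{away} from $u$; it does nothing to stop $v_2$ from lying near the segment $uv$ itself.

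The paper closes exactly this gap with two extra moves you are missing. First, it chooses which side to work on: WLOG $|uv|\le |uw|$, which in the triangle $uvw$ forces the angle $\gamma$ at $w$ to be at most $\pi/2$. Second, instead of the triangle $uvv_2$, it uses the triangle $uvx$ where $x$ is the intersection of the ray from $v$ through $v_2$ with the ray from $u$ through $w$. In this triangle the angle at $u$ is exactly $\alpha$, the angle at $v$ is exactly $\phi_v$, and the angle $\delta$ at $x$ satisfies $\eta/2\le\delta\le\gamma\le\pi/2$ (the upper bound coming from $\phi_v\ge\beta$ by convexity). Convexity also gives $|vx|\ge |vv_2|\ge D^{-1}|uv|$, and now the law of sines yields $\sin\alpha=|vx|\sin\delta/|uv|\ge D^{-1}\sin(\eta/2)$.
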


\begin{pf}
Let $\alpha$ be the angle between three consecutive vertices on a face
$v_1, v_2,v_3$ such that the edge $[v_1,v_2]$ is not longer than the edge
$[v_2,v_3]$. By convexity, the triangle $v_1,v_2,v_3$ is contained in the
face. Let $\beta, \gamma$ be the angles $\angle v_2 v_1 v_3$ and
$\angle
v_2 v_3 v_1$, respectively. By our assumption, we learn that $\beta
\geq
\gamma$, hence $\gamma\leq\pi/2$. See Figure~\ref{fig.noacute}.

\begin{figure}[b]

\includegraphics{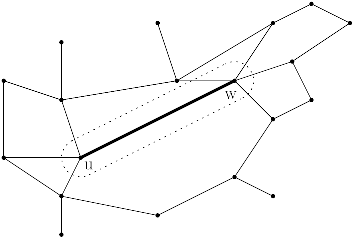}

\caption{The sausage lemma: no edge can intersect the marked ``sausage''
with width $c|e|$.}
\label{fig.sausage}
\end{figure}

If $\alpha\geq\eta/2$, then we are done since $\eta/2 \geq D^{-1}
\sin(\eta/2)$. Otherwise, let $v_0$ be the vertex before $v_1$ on the
face (if the face is a triangle, then $v_0=v_3$). Let $\beta'$ be~the
angle $\angle v_0 v_1 v_2$ so that $\beta'\geq\beta$, and by (b) we have
$\beta' \leq\pi- \eta$. Let $x\in\R^2$ be the~meeting point of
the ray
emanating from $v_1$ toward $v_0$ and the ray emanating from $v_2$
toward $v_3$ (since $\alpha+ \beta' < \pi$ these rays must intersect
and the intersection point $x$ must be on the same side of the infinite
line through $v_1, v_2$ as $v_0$ and $v_3$). Let $\delta$ be the angle
$\angle v_1 x v_2$. We have that $\delta=\pi- \beta' - \alpha$ hence
$\eta/2 \leq\delta\leq\gamma\leq\pi/2$. Hence, by the law of sines
\[
\alpha\geq\sin(\alpha) = { |v_1 x| \sin(\delta) \over|v_1 v_2| } \geq{ |v_1 v_0| \sin(\delta) \over|v_1 v_2|}
\geq D^{-1} \sin(\eta/2),
\]
where $|v_1 x| \geq|v_1 v_0|$ since by convexity $v_0$ and $v_1$ are on
the same side of the infinite line passing through $v_2$ and $v_3$.
\end{pf}

\begin{lemma}[(Sausage lemma)]
\label{L:sausage}
There exists $c=c(D,\eta)>0$ such that if $e,f$ are nonadjacent edges
then $d(e,f) \geq c|e|$, where $|e|$ and $d(\cdot,\cdot)$ are Euclidean
length and distance. In particular, any vertex $u\in V \setminus e$ is
of distance at least $c|e|$ from $e$.
\end{lemma}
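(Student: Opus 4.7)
The plan is to decompose the argument into two parts: an \emph{intra-face} estimate asserting that non-adjacent edges of a common face $F$ satisfy $d(e, f) \geq c_1|e|$ for some $c_1 = c_1(D, \eta)$, and a \emph{reduction} showing that the general case reduces (after choosing $f$ to minimize $d(e,\cdot)$ over non-adjacent edges and tracing the segment realizing this distance) to the intra-face situation or to a closely related angular estimate.

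For the intra-face estimate I would place $e = [v, w]$ on the $x$-axis from $(0,0)$ to $(\ell, 0)$ with $F$ in the upper half-plane, and set $\alpha_0 := D^{-1}\sin(\eta/2)$ as in \cref{L:noacute}. Condition (a) of a good embedding and \cref{L:noacute} give that all interior angles of $F$ lie in $[\alpha_0, \pi - \eta]$ and $F$ has at most $2\pi/\eta$ vertices. Labeling them counterclockwise as $v_0 = v,\ v_1 = w,\ v_2, \ldots, v_{k-1}$, the angle at $v_1$ directly forces $y(v_2) \geq |v_1 v_2|\sin\alpha_0 \geq D^{-1}\ell\sin\alpha_0$, and symmetrically for $v_{k-1}$. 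Propagating via the cumulative turning-angle formula (each exterior angle lies in $[\eta, \pi - \alpha_0]$) together with the edge-length ratio bound from (b), a finite induction over the at most $2\pi/\eta$ vertices yields $y(v_j) \geq c_1\ell$ for every $j \in \{2, \ldots, k-1\}$. Consequently any edge of $F$ whose endpoints avoid $\{v, w\}$ lies at Euclidean distance $\geq c_1\ell$ from $e$.

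For the reduction, let $p \in e$ and $q \in f$ realize $d(e, f)$, and assume $f$ minimizes $d(e, \cdot)$ among all non-adjacent edges. By this minimality, the straight segment $[p, q]$ cannot cross any non-adjacent edge before reaching $q$, and it initially enters one of the two faces $F_1, F_2$ adjacent to $e$. If it remains in $F_i$ until $q$, then $f \subset \partial F_i$ and the intra-face estimate applies. Otherwise the only boundary edges of the current face that $[p,q]$ is permitted to cross are those incident to $v$ or $w$, which take the segment into a new face still containing $v$ or $w$; since the degree at any vertex is bounded by $2\pi/\alpha_0$, this iteration terminates and $q$ lies on $f \subset \partial F^*$ for some face $F^*$ containing $v$ or $w$, say $v$. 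Applying the intra-face estimate inside $F^*$ to an edge at $v$ (of length comparable to $\ell$ by (b)) yields $d(v, f) \geq c\ell$. Moreover, since $F^* \neq F_1, F_2$, the angular sector of $F^*$ at $v$ sits at angular distance $\geq \alpha_0$ from the direction of $e$ at $v$; a direct computation then shows that every point of $F^*$ at distance $r$ from $v$ is at Euclidean distance $\geq r \sin\alpha_0$ from $e$. Combining, $d(e, f) \geq c\sin\alpha_0 \cdot \ell$.

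The delicate step I anticipate is the propagation in the intra-face estimate: convexity alone only ensures $y(v_j) \geq 0$, and extracting a uniform positive lower bound of order $\ell$ requires tracking the angle and edge-length constraints carefully across the (at most $2\pi/\eta$) turning steps around $\partial F$ to rule out cancellations that could drive $y(v_j)$ arbitrarily close to zero for intermediate vertices.
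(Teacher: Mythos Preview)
Your approach is correct but more elaborate than the paper's. The paper constructs the sausage directly: it shows that (i) a ball of radius $D^{-1}\sin(\eta/2)\,|e_0|$ around each endpoint $v,w$ meets only edges emanating from that endpoint (for consecutive neighbours $v_1,v_2$ of $v$ one has $|vv_i|\ge D^{-1}|e_0|$ and $\angle v_1vv_2\le\pi-\eta$, so the chord $v_1v_2$ lies at least $D^{-1}\sin(\eta/2)\,|e_0|$ from $v$, and convexity pushes the remaining boundary of that face further still), and (ii) each of the two faces containing $e_0$ contains a trapezoid with base $e_0$, legs along the two adjacent edges, and height $\asymp |e_0|$ (using \cref{L:noacute} and the edge-ratio bound). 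The union of the two balls and two trapezoids is the desired neighbourhood of $e_0$ free of non-adjacent edges. By contrast, your intra-face estimate is stronger than what is needed --- and the step you flag as delicate is actually immediate: convexity of $F$ makes the $y$-coordinate unimodal along the chain $v_1,v_2,\dots,v_{k-1},v_0$, so the minimum over $\{v_2,\dots,v_{k-1}\}$ is attained at $v_2$ or $v_{k-1}$, both of which you have already bounded; no propagation or induction is required. Your reduction is also sound (once one observes that after leaving $F_1$ through an edge at $v$ one stays in faces at $v$, since any face containing both $v$ and $w$ must contain $e$), though the appeal to a minimising $f$ is cleaner if rephrased as ``stop at the first non-adjacent edge met along $[p,q]$'', which avoids any compactness issue. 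What your route buys is a reusable standalone statement (all non-endpoint vertices of a face lie at height $\asymp|e|$ above $e$); what the paper's buys is brevity and a concrete picture of the protected region.
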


\begin{pf} Write $e=\{v,w\}$ and see Figure~\ref{fig.sausage}. Let
$v_1,v_2$ be two consecutive neighbors
of $v$. Because the angle $\angle v_1 v v_2$ is at most $\pi-\eta$, one
of the angles $\angle v_1 v_2 v$ or $\angle v_2 v_1 v$ is at least
$\eta/2$ and by (c) both $|v v_1|$ and $|v v_2|$ are at least $D^{-1}
|e|$. Hence, the distance between $v$ and the line through $v_1$ and
$v_2$ is at least $D^{-1} \sin(\eta/2)|e|$. We conclude that there are
no points of $X$ inside a ball around $v$ of radius $D^{-1}
\sin(\eta/2)|e|$ except for the edges emanating from $v$ and the same
holds for $w$.

\begin{corollary}
\label{L:chorizo}
There exists $c=c(D,\eta)>0$ such that for any edge $e$ and vertex $u
\notin e$ and $r>0$ we have that if $e$ intersects $\Beuc(u,c r)$,
then $e$
is contained in $\Beuc(u,r)$.
\end{corollary}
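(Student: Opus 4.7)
The plan is to pull \cref{L:sausage} off the shelf and combine it with the triangle inequality. Let $c_0 = c_0(D,\eta)>0$ be the constant supplied by \cref{L:sausage}, so that for any vertex $u$ not an endpoint of an edge $e$ one has $d(u,e) \geq c_0 |e|$.

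Suppose $e$ intersects $\Beuc(u,cr)$ at some point $x$. Then on one hand $|x-u|\le cr$, so in particular $d(u,e)\le cr$; on the other hand, since $u$ is not an endpoint of $e$, \cref{L:sausage} gives $d(u,e)\ge c_0|e|$. Combining the two yields the length bound $|e|\le (c/c_0)r$. Now for any point $y\in e$, the triangle inequality gives
\[
|y-u| \;\le\; |y-x|+|x-u| \;\le\; |e| + cr \;\le\; \Bigl(\tfrac{1}{c_0}+1\Bigr) c\,r.
\]
Choosing $c := c_0/(c_0+1)$, the right-hand side is at most $r$, so $e\subset \Beuc(u,r)$.

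There is really no obstacle here; the only thing to verify is that we are allowed to invoke the sausage lemma, which requires $u$ not to be an endpoint of $e$, and this is exactly the hypothesis $u\notin e$. The constant $c$ depends only on $c_0$, and hence only on $D$ and $\eta$, as required.
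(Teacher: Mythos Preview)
Your proof is correct and is exactly the intended argument: the paper states this as an immediate corollary of \cref{L:sausage} without writing out a proof, and the derivation via the triangle inequality and the choice $c = c_0/(c_0+1)$ is precisely what is implicit there.
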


Next, consider one of the two faces containing $e$ and let $v_1$ be the
neighbor of $v$ in the face that is not $w$ and similarly $w_1$ be the
neighbor of $w$ in the face that is not $v$ (if the face is a triangle,
then $w_1=v_1$). By Lemma~\ref{L:noacute} the angles $\angle v_1 v w$ and
$\angle v w w_1$ are at least $D^{-1} \sin(\eta/2)$ and by condition (b)
these angles are at most $\pi-\eta$. Hence, by condition (c), the face
contains a trapezoid in which $e$ is a base and the two sides are
sub-intervals containing $v$ and $w$ of the edges $(v,v_1)$ and $(w,w_1)$,
respectively, and of height at least $D^{-1} |e| \sin(D^{-1}
\sin(\eta/2))$.
\end{pf}

\begin{lemma}\label{L:face_diam}
There exists a constant $C=C(D,\eta)<\infty$ such that if an edge $e$ is
contained in a face $f$, then $\diam(f) \leq C|e|$, and
$ C^{-1} \leb(f) \le|e|^2 \leq C
\leb(f)$, where $\leb(f)$ is the usual Lebesgue area measure in $\R^2$.
\end{lemma}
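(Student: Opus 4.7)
The plan is to prove the two bounds separately, using in each case that a $(D,\eta)$-good face is a convex polygon with a bounded number of sides, all of comparable length.

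For the diameter bound, I would first observe that since every inner angle of $f$ is at most $\pi - \eta$, the number $k$ of edges of $f$ is at most $2\pi/\eta$. Walking around $f$ starting at $e$, any two consecutive edges share a vertex and so by condition (b) their lengths differ by a factor in $[D^{-1},D]$. Iterating at most $k$ times, every edge of $f$ has length at most $D^{k}|e|$, hence the perimeter of $f$ is at most $k D^{k}|e|$. Since $f$ is convex, its diameter is attained between two vertices and is bounded by half the perimeter (walking around $f$ along the shorter arc between the two extremal vertices), giving $\diam(f)\leq C|e|$ for some $C=C(D,\eta)$.

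For the area bound, I would simply invoke the construction from the proof of the Sausage Lemma (\cref{L:sausage}): that argument exhibits, inside the face $f$ containing $e_0=(v,w)$, an explicit trapezoid with base $e_0$ and height at least $D^{-1}|e_0|\sin\!\bigl(D^{-1}\sin(\eta/2)\bigr)$. The area of this trapezoid is at least $\tfrac{1}{2}|e_0|$ times this height, so
\[
\leb(f)\;\geq\; \tfrac{1}{2}D^{-1}\sin\!\bigl(D^{-1}\sin(\eta/2)\bigr)\,|e_0|^{2},
\]
and inverting the constant gives $|e|^{2}\leq C\,\leb(f)$, perhaps after enlarging $C$.

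The argument is almost entirely a repackaging of earlier lemmas, so there is no serious obstacle. The only mildly delicate point is the diameter bound: one must remember that adjacency of edges in the graph is equivalent to edges of the polygonal face sharing a vertex, so that condition (b) really does propagate around the face, and that the number of propagations is under control precisely because condition (a) caps the number of edges of the face.
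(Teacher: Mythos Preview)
Your proposal is correct and follows essentially the same route as the paper. For the diameter bound you do exactly what the paper does: cap the number of sides by $2\pi/\eta$, propagate condition~(b) around the face, and bound the diameter by the perimeter. For the area bound the paper appeals directly to \cref{L:noacute} together with comparable adjacent edge lengths (i.e.\ the triangle on $e$ and one neighbouring edge sits in the convex face and has area $\asymp |e|^2$), whereas you cite the trapezoid already built in the proof of \cref{L:sausage}; since that trapezoid is constructed precisely from \cref{L:noacute} and condition~(b), the two justifications are the same argument in slightly different packaging.
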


\begin{pf}
Since external angles in a polygon add up to $2\pi$, the number of sides
of a face is at most $2\pi/\eta$, and since consecutive sides have length
ratio at most $D$, any two sides of a face have ratio at most $C$, and
the diameter of the face is at most some constant times the shortest
edge.

The relation to the area of $f$ follows from Lemma~\ref{L:noacute} and that
edges adjacent to $e$ have comparable lengths.
\end{pf}

Most of our arguments will take place in the metric space $(X,d_0)$ defined
as follows. For an edge $(u,v)\in E$, write $[u,v]$ for the closed line
segment in the plane from $u$ to $v$. We put
\[
X = \bigcup_{(u,v)\in E} [u,v],
\]
and let $d_0$ be the shortest path distance in $X$. For $x\in X$ and $r>0$
we write $B_{d_0}(x,r)$ for the ball $\{y \in X: d_0(x,y) \leq r\}$. An
idea that we will use frequently is to take a curve in $\R^2$ with
some useful properties and modify it slightly to get a curve in $X$ with
similar properties.

\begin{proposition}\label{P:bilipschitz}
Assume that $\carr(G)=U$. There exists a constant $C_1=C_1(D,\eta)$ such
that for any $x,y\in X$ we have
%
\begin{equation}
\label{e:d0equiv} |x-y| \leq d_0(x,y) \leq C_1 |x-y|.
\end{equation}
\end{proposition}

\begin{pf}
Since $d_0(x,y)$ is the Euclidean length of the shortest path in $X$
between $x$ and $y$ the inequality $|x-y| \leq d_0(x,y)$ is obvious.

To prove the other inequality, we first prove the assertion for $x$ and
$y$ that are on the same face. If $x$ and $y$ are on the same edge then
$d_0(x,y) = |x-y|$. If $x$ and $y$ are on two different edges that
share a vertex $v$, then since the angle at $v$ is bounded away from $0$
(by Lemma~\ref{L:noacute}) we deduce by the law of sines on the triangle
$x,v,y$ that $d_0(x,y) \leq|x-v| + |y-v| \leq C |x-y|$.
Lastly, when $x$ and $y$ are on two edges of the same face not sharing a
vertex, Lemma~\ref{L:sausage} immediately gives that $|x-y|$ is at
least $c|e|$, where $e$ is some edge that contains
$x$. Lemma~\ref{L:face_diam} gives that $|e|$ is at least a constant multiple
times the diameter of the face and the assertion follows.

Finally, when $x$ and $y$ are not on the same face let $[x,y]$ be the
straight segment connecting $x$ and $y$ and let $x=x_0,x_1,\ldots,x_k=y$
be the points on $[x,y]$ where the segment intersects $X$, so that $x_i$
and $x_{i+1}$ are on the boundary of some face for all $i=0,\ldots, k-1$.
Then $d_0(x_i,x_i+1) \leq C |x_i-x_{i+1}|$ and summing over $i$ finishes
the proof of the lemma.
\end{pf}

Consequently,
the completion of $(X,d_0)$ is $\overline X = X\cup\partial U$ with
the topology
induced from $\R^2$, and \eqref{e:d0equiv} extends to the space
$\overline X$.

\begin{lemma}[($X$ is inner uniform)]
\label{L:inneruniform}
Assume that $G$ has a $(D,\eta)$-good embedding and that
$\carr(G)=U$. There exist constants $C=C(D,\eta)<\infty$ and
$c=c(D,\eta)>0$ such that for any $\xi_1, \xi_2
\in\partial U$ with $\xi_1 \neq\xi_2$ there exists a continuous curve
$\Gamma:[0,L] \to\overline{X}$ such that the following holds:
\begin{longlist}[(1)]
\item[(1)]$\Gamma$ is parametrized by length, that is,
$\operatorname{length}(\Gamma[0,t]) = t$ for all $t\in[0,L]$.
\item[(2)]$\Gamma(0) = \xi_1$ and $\Gamma(L) = \xi_2$.
\item[(3)]$L \leq C d_0(\xi_1,\xi_2)$.
\item[(4)] For any $t \in(0,L)$ we have
\[
d_0\bigl(\Gamma(t), \partial U\bigr) \geq c \min(t, L-t).
\]
\end{longlist}
\end{lemma}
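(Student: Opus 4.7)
The plan is to construct $\Gamma$ by approximating the hyperbolic geodesic $\tilde\Gamma:[0,L_0]\to\overline U$ from $\xi_1$ to $\xi_2$---the circular arc meeting $\partial U$ orthogonally---by a curve in $X$, using the bi-Lipschitz estimate of \cref{P:bilipschitz}. Standard computations in the disc give $L_0\asymp|\xi_1-\xi_2|$ and $\mathrm{dist}(\tilde\Gamma(s),\partial U)\asymp\min(s,L_0-s)$ (the arc meets $\partial U$ perpendicularly, so depth grows linearly from each endpoint), so $\tilde\Gamma$ already fulfils the Euclidean analogues of properties (3) and (4) inside $U$.

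The key geometric input I need is that for every $p\in U$ one has $\mathrm{dist}(p,X)\leq\mathrm{dist}(p,\partial U)$. This holds because $p$ lies in the closure of some face $f\subset U$, and the straight segment from $p$ to its nearest point of $\partial U$ must exit $f$ (as $f\subset U$), with the exit point lying on $\partial f\subset X$ within distance $\mathrm{dist}(p,\partial U)$ of $p$. Armed with this, I would pick a dyadic sequence of times $\{s_i\}_{i\in\Z}$ along $\tilde\Gamma$ accumulating at $0$ and $L_0$ with $s_{i+1}-s_i\asymp\delta_i:=\min(s_i,L_0-s_i)$; for each $i$ select $x_i\in X$ close to $\tilde\Gamma(s_i)$; and connect consecutive $x_i$ by $d_0$-geodesics in $X$, each of $d_0$-length $\leq C_1|x_i-x_{i+1}|\lesssim\delta_i$ by \cref{P:bilipschitz}. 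Summing these piece-lengths as a geometric series yields total length $O(L_0)\lesssim|\xi_1-\xi_2|\leq d_0(\xi_1,\xi_2)$, giving (3).

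The main obstacle is property (4). The bound $\mathrm{dist}(p,X)\leq\mathrm{dist}(p,\partial U)$ is only a factor-one inequality, so a naive choice of $x_i$ could land arbitrarily close to $\partial U$. To fix this I would refine the construction by choosing $x_i$ along a direction from $\tilde\Gamma(s_i)$ that \emph{strictly increases} the distance to $\partial U$, e.g.\ the inward radial direction toward the origin, taking $x_i$ to be the first intersection of that ray with $\partial f$. Moving inward radially, $\mathrm{dist}(\cdot,\partial U)=1-|\cdot|$ grows at unit rate, giving $\mathrm{dist}(x_i,\partial U)\geq\delta_i+|x_i-\tilde\Gamma(s_i)|\gtrsim\delta_i$. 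One then has to check that each connecting $d_0$-geodesic from $x_i$ to $x_{i+1}$ also stays at distance $\gtrsim\delta_i$ from $\partial U$; I expect a case split, with the easy case $\diam(f)\lesssim\delta_i$ handled by the short length of the connecting path, and the harder case $\diam(f)\gg\delta_i$ handled by noting that the inward-radial lift has already pushed $x_i$ deep inside $U$ by an amount comparable to $\diam(f)$, leaving room for the connection. Finally, \cref{P:bilipschitz} and the comparability of $d_0$ and Euclidean distance to $\partial U$ on $X$ translate the Euclidean bound into the required $d_0$-bound in (4).
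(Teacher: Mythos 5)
Your starting point is the same as the paper's: take the hyperbolic geodesic (the arc of a circle orthogonal to $\partial U$) from $\xi_1$ to $\xi_2$ and approximate it by a curve in $X$. From there, however, you diverge, and your version of the construction leaves the hard part --- property (4) --- essentially unproven.

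The paper's construction is simpler and avoids the difficulties you run into. It takes $x_i$ to be the \emph{actual} intersection points of the geodesic arc $\gamma$ with $X$, so that $x_i$ and $x_{i+1}$ always lie on the boundary of a single common face $f_i$, and connects them by the shorter arc of $\partial f_i$. This is also a $d_0$-geodesic (the shortest path between two boundary points of a convex face, staying outside the open face, runs along the boundary), so (3) follows by summing and applying \cref{P:bilipschitz}. The crucial ingredient for (4) is the observation, derived from \cref{L:sausage}, that for every face $f$
\[
\max_{z \in \partial f} d_0(z, \partial U) \leq C \min_{z \in \partial f} d_0(z, \partial U),
\]
so the entire connecting curve $\Gamma_i \subset \partial f_i$ automatically has the same order of distance to $\partial U$ as $x_i$ does, and (4) propagates from the discrete points $x_i$ to all of $\Gamma$. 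This uniform comparability of $d_0(\cdot,\partial U)$ over a face boundary (and the closely related fact that $\diam(f) \lesssim d_0(z, \partial U)$ for $z \in \partial f$) is exactly the lemma your argument is missing.

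By contrast, your plan picks points $\tilde\Gamma(s_i)$ at dyadic times, pushes them radially inward to $x_i \in X$, and joins consecutive $x_i$ by arbitrary $d_0$-geodesics in $X$. This creates two gaps. First, your $x_i$ and $x_{i+1}$ need not lie on a common face, so the connecting $d_0$-geodesic can wander through several faces and there is no a priori control on how close it gets to $\partial U$; the bound $|x_i - x_{i+1}| \lesssim \delta_i$ combined with the \cref{P:bilipschitz} constant $C_1 > 1$ gives a geodesic of length $\leq C_1 C' \delta_i$, which is \emph{not} dominated by the lower bound $\mathrm{dist}(x_i,\partial U) \geq c\delta_i$ without a separate argument controlling the constants. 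Your radial-inward trick does not resolve this: pushing $x_i$ inward by $|x_i - \tilde\Gamma(s_i)|$ increases $\mathrm{dist}(x_i,\partial U)$ by that amount, but it also increases $|x_i - x_{i+1}|$ by a comparable amount, and the bilipschitz constant then multiplies the latter, so the bookkeeping does not close. Second, your ``harder case $\diam(f) \gg \delta_i$'' in fact never occurs --- $\diam(f) \lesssim \delta_i$ always holds when $\tilde\Gamma(s_i) \in f$ is at distance $\asymp \delta_i$ from $\partial U$ --- but establishing this is precisely the sausage-lemma observation above, which your write-up never states or uses. So the ``case split'' you anticipate is not a routine check: it is the heart of the proof, and the argument as written does not contain the idea needed to carry it out.
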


\begin{pf}
Consider a circle orthogonal to $U$ through $\xi_1,\xi_2$, and the
continuous curve $\gamma$ which is the arc from $\xi_1$ to $\xi_2$
in that
circle. Let $(\ldots, x_{-2},\break x_{-1},x_0,x_1, x_2,\ldots)$ be the set
$\gamma\cap X$ with the order induced by $\gamma$. Two consecutive
points $x_i$ and $x_{i+1}$ are on the same face, so write $\Gamma_i$ for
a piecewise linear curve on the boundary of that face connecting $x_i$ to
$x_{i+1}$ in the shorter way according to $d_0$. Let $\Gamma$ be the
concatenation of $(\Gamma_i)_{\{-\infty< i < \infty\}}$,
parametrized by
arc length (which we shall see below is finite). Note that $\Gamma$ might
not be a simple curve, which does not cause any difficulty. Thus, (1)
holds. Let us show that $\Gamma$ satisfies requirements (2)--(4).

We first note that by Lemma~\ref{L:sausage} we have that there exists $C>0$
such that for any face $f$
%
\begin{equation}
\label{facebdry} \max_{z \in\partial f} d_0(z, \partial U) \leq
C \min_{z \in\partial f} d_0(z, \partial U),
\end{equation}
where by $z \in\partial f$ we mean that $z\in X$ is on one of the edges
encompassing $f$. Now, it is clear that $x_k \to\xi_1$ when $k\to
-\infty$ and $x_k \to\xi_2$ when $k \to\infty$, so by \eqref{facebdry}
we get that $\Gamma$ satisfies requirement (2).

Next, we have that $\operatorname{length}(\Gamma_i) = d_0(x_i,x_{i+1})$ since $x_i$ and
$x_{i+1}$ are on the same face $f$, and since the shortest curve between
two points on the boundary of a convex face $f$ that does not enter the
face is along its boundary. Thus, we have $\operatorname{length}(\Gamma) = \sum_i
d_0(x_i,x_{i+1}) \leq C |\xi_1-\xi_2|$ by Proposition~\ref
{P:bilipschitz}, so
requirement (3) holds. Lastly, (4) holds immediately for the points
$x_k$, and by \eqref{facebdry} we obtain this for any point on $\Gamma$.
\end{pf}

\section{The cable process}
\label{sec-cableprocess}

It will be convenient for us to obtain useful
estimates using results of Sturm \cite{Sturm}. For that, we need to
introduce the \emph{cable process} which can be thought of as Brownian
motion on the embedding of $G$. (See, e.g., \cite{Var}.)
Recall that $(w_e)$ are edge weights on $G$, which are bounded above
and away from~0.
An intuitive description of the process is
as follows: Let $x$ be a vertex and $e_1, \ldots, e_k$ the edges emanating
from it, and let $\{W_t\}_{t \geq0}$ be standard Brownian motion. It is
well known that $W_t$ can be decomposed into countably many excursions in
which $W_t \neq0$. For each such excursion, we choose the edge $e_i$ with
probability proportional to $w_{e_i} |e_i|$ for $i=1,\ldots,k$ and embed
the excursion on the edge $e_i$. We stop when we hit one of the neighbors
$x_1, \ldots, x_k$ of $x$, and continue from this neighbor using the
strong Markov property.
Thus, this process is a standard Brownian motion on the edges (or ``cables''),
and behaves like a Walsh Brownian motion (see \cite{Wal}) at the vertices.

Note that it is possible for this process to
``explode'', or visit infinitely many vertices in finite time, and indeed
this does happen almost surely in the transient setting.

Before defining the process formally, let us state two useful properties
that will make the connection to the discrete time weighted random walk
evident. Denote by $Z_t$ the process and let $T$ be the hitting time of
$\{x_1,\ldots, x_k\}$. Then for $1 \leq i \leq k$, we have (see
\cite{Folz}, Theorem~2.1)
%
\begin{equation}
\label{rwtrace} \prob_x ( Z_T = x_i ) =
\frac{w_{e_i} }{ \sum_{i=1}^k w_{e_i} },
\end{equation}
that is, the process $Z_t$ observed on
vertices has the same trace as the simple random walk (we ignore the
uncountably many times it visits each $x$ before proceeding to one of its
neighbors). Also, in our setting, there exists a constant $C=C(D,\eta)>0$
such that (see \cite{Folz}, Theorem~2.2)
%
\begin{equation}
\label{timetomove} C^{-1} \leq\frac{\E_x T}{r_x^2} \leq C,
\end{equation}
where $r_x$ is the length of the shortest edge touching $x$ (and so is
comparable to the length of any edge touching $x$). Intuitively, the
process $Z_t$ behaves like the variable speed random walk that waits
roughly $r_x^2$ time at vertex $x$ before proceeding.

The construction based on excursions can be made precise. However,
it is easier to define the cable process via the methods of
Dirichlet forms (see \cite{fot}).
Let $(\overline X,d_0)$ be the compact metric space defined in
Section~\ref{S:prelim}.
For an edge $(u,v)$, write $dx$ for Lebesgue measure on $[u,v]$, and
define a measure $m$ on $\overline X$ by taking
\[
m(dx) = \sum_{(u,v)\in E} |u-v| w_{uv} \,dx.
\]

\begin{lemma}\label{mXfinite}
We have $m(\overline X)<\infty$.
\end{lemma}

\begin{pf}
By Lemma~\ref{L:face_diam}, the measure of any edge $e$ is less than
$C\leb( f_1 \cup f_2)$,
where $f_1, f_2$ are the two faces containing $e$. Since $\carr(G)
=U$, it follows that
$m(\overline X) \le2C \leb(U)$.
\end{pf}

We say that a function $f$ on $\oX$
is piecewise differentiable if it is continuous at each vertex, and is
differentiable w.r.t. the length measure on every edge. (We require
that the one sided derivatives exist at the end of each edge.)
The derivative $f'$ depends on the direction, and only makes
sense if we fix a direction for every edge. However, $f' g'$ is
well defined for differentiable $f,g$ and does not depend on choosing a
direction on the edges. Let $\sD_0$ be the space of differentiable
functions $f$ on $\oX$
such that $f'$ is continuous on each edge, and $|f'|$ is bounded.
For $f,g \in\sD_0$, let
\begin{eqnarray*}
d\Gamma(f,g) (dx) &=& \sum_{(u,v)\in E}
1_{[u,v]}(x) f'(x) g'(x) |u-v|
w_{uv} \,dx,
\\
\sE(f,g) &=& \sum_{(u,v)} \int_{[u,v]}
f'(x) g'(x) |u-v| w_{uv} \,dx = \int
_{\oX} \,d\Gamma(f,g).
\end{eqnarray*}
Let $\sD$ be the completion of $\sD_1$ with respect to the norm
\[
\Vert f\Vert _{\sE_1} = \biggl( \sE(f,f) + \int_{\oX}
f^2 \,dm \biggr)^{1/2}.
\]
It is straightforward to verify that the bilinear form $(\sE, \sD_0)$
is closed and Markov
(see \cite{fot}, page 4 and \cite{CF}, Section~2.2), so that $(\sE,
\sD)$ is a Dirichlet form.
Since $\sD_0$ is dense in $C(\oX)$, the continuous functions on $\oX$,
$(\sE, \sD)$ has a core and is thus a regular Dirichlet form. The
associated strong Markov process $Z$ is the cable process on $\oX$. We remark
that with this construction the functions $f$ in the domain $\sD$ do
not vanish
on the boundary $\partial U$, so that the process $Z$ is conservative
and will
reflect from the boundary after its first hit.

The space $(\oX,m,\sE)$ has an intrinsic metric associated with it (see
\cite{Sturm}), where the distance between $x,y$ is given by
\[
\sup \bigl\{ f(x)-f(y) : f \in\sD \mbox{ and } \bigl|f'\bigr| \leq1 \bigr\}.
\]
In our case, it is clear that this metric coincides with $d_0$ defined
above. We will show that this space is doubling and has a weak Poincar\'e
inequality.

\subsection{Doubling}

\begin{lemma}\label{strongdoubling}
Let $X$ be a good embedding of some graph. There exists a integer
$M=M(D,\eta)>0$ such that for any $x\in X$ and any $r>0$ there exists\vadjust{\goodbreak}
$x_1, \ldots, x_M \in X$ such that
\[
B_{d_0}(x,2r) \subset\bigcup_{i=1}^M
B_{d_0}(x_i,r).
\]
\end{lemma}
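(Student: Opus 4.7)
The plan is to reduce this to the standard doubling property of $\R^2$ by invoking the bi-Lipschitz equivalence of $d_0$ and the Euclidean metric established in \cref{P:bilipschitz}, which gives $|x-y|\le d_0(x,y)\le C_1|x-y|$ with $C_1 = C_1(D,\eta)$.

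First I would note that the lower inequality yields $B_{d_0}(x,2r)\subset \Beuc(x,2r)\cap X$, so it suffices to cover the Euclidean ball $\Beuc(x,2r)$ by a controlled number of sets, each of which sits inside some $B_{d_0}(x_i,r)$. By the standard volume packing in $\R^2$, the Euclidean ball $\Beuc(x,2r)$ can be covered by $N$ Euclidean balls of radius $\rho:=r/(2C_1)$, where $N$ depends only on the ratio $2r/\rho = 4C_1$ (hence only on $D$ and $\eta$). For each such small Euclidean ball $B_j$ that meets $X$, I would pick an arbitrary representative $x_j\in B_j\cap X$, and discard any ball disjoint from $X$.

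Now given $y\in B_{d_0}(x,2r)$, we have $y\in X\cap \Beuc(x,2r)$, so $y$ lies in some $B_j$ that also contains the chosen center $x_j$. Then $|y-x_j|\le 2\rho = r/C_1$, and the upper bound of \cref{P:bilipschitz} gives $d_0(y,x_j)\le C_1\cdot r/C_1 = r$, i.e.\ $y\in B_{d_0}(x_j,r)$. Taking $M=N$ completes the covering.

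There is no serious obstacle: the statement is a direct transfer of Euclidean doubling through the bi-Lipschitz equivalence. The only mild care needed is to ensure the chosen centers lie in $X$ (so that $B_{d_0}(x_i,r)$ makes sense), which is arranged by discarding the Euclidean covering balls that do not meet $X$ and picking representatives in the rest.
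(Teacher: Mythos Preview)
Your proof is correct and follows essentially the same approach as the paper: both use the bi-Lipschitz equivalence of \cref{P:bilipschitz} to reduce to Euclidean doubling, cover $\Beuc(x,2r)$ by Euclidean balls of radius $r/(2C_1)$, pick representatives in $X$ from those that meet $X$, and use the upper Lipschitz bound to absorb each small Euclidean ball into a $d_0$-ball of radius $r$.
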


\begin{pf}
This is an easy consequence of the equivalence between $d_0$ and the
Euclidean metric. We have that $B_{d_0}(x,2r) \subset\Beuc(x,2r)$. Let
$C_1$ be the constant from Proposition~\ref{P:bilipschitz}. The
Euclidean ball can
be covered by $M$ Euclidean balls $\Beuc(y_i,r/2C_1)$ of radius $r/2C_1$
for some $M\asymp C_1^2$. For each $i$, if $\Beuc(y_i,r/2C_1)$
intersects $X$ and $x_i$ is an arbitrary point in the intersection then
$\Beuc(y_i,r/2C_1) \cap X \subset B_{d_0}(x_i,r)$. Otherwise, we ignore
this ball. So the collection of balls $B_{d_0}(x_i, r)$ covers
$B_{d_0}(x,2r)$.
\end{pf}

Recall the radius of isolation $r_u$ defined for $u\in V$ as the distance
to the nearest vertex $v\neq u$. We extend this to $x\in X$ by letting
$r_x$ be the length of the edge containing $x$ when $x\in X\setminus
V$, and
setting $r_x=0$ if $x \in\partial U$.
Note that if $r \in(0,1)$ and $x \in\oX$ then we have $\leb( \Beuc
(x,r)) \ge c r^2$.

\begin{lemma}\label{doubling}
For any $x \in\oX$ and $r\in(0,1)$,
we have
%
\begin{equation}
\label{e:mBbound} m\bigl(B_{d_0}(x, r)\bigr) \asymp r\cdot(r\vee
r_x).
\end{equation}
In particular, there exists a constant $C>0$ such that for $r>0$
\[
m\bigl(B_{d_0}(x,2r)\bigr) \leq C m\bigl(B_{d_0}(x,r)\bigr).
\]
\end{lemma}

\begin{pf}
First, we assume that $x$ is a vertex.
When $r \leq r_x$, we have that $m(B_{d_0}(x,r)) \asymp
r_x r$, because the degrees of $G$ are bounded and adjacent edges have
comparable length. So it suffices to prove that $m(B_{d_0}(x,r)) \asymp
r^2$ when $r \geq r_x$.

By Lemma~\ref{L:face_diam}, for an edge $e$ we have $m(e) \asymp|e|^2
\leq
C\leb(f)$ where $f$ is a face containing $e$. Since each face has a
bounded degree, and since faces intersecting $B_{d_0}(x,r)$ are fully
contained in $\Beuc(x,Cr)$ we find $m(B_{d_0}(x,r)) \leq C r^2$,
where $C=C(D,\eta)<\infty$.
For the lower bound, if $r_x \le r \le Cr_x$ then
$m(B_{d_0}(x,r)) \ge r_x^2 \ge C^{-2} r^2$. If $r>Cr_x$
by Corollary~\ref{L:chorizo} and Proposition~\ref{P:bilipschitz}, the union
of the set of faces $f$
adjacent to edges $e$ contained in $B_{d_0}(x,cr)$ contains
$U \cap\Beuc(x,c^2 r)$, giving the required lower bound.

If $x \in X$ is not a vertex, then let $y$ be a closest vertex to $x$,
and let $s=d_0(x,y)$.
If $r\le s$ then $m(B_{d_0}(x,r))=2 r r_x$, which equals $2 r( r \vee
r_x)$ since
$r_x \ge s$.
Now suppose that $r>s$. Then $B_{d_0}(x,r) \subset B_{d_0}(y,2r)$,
and since $r_y \asymp r_x$, this gives the upper bound in \eqref{e:mBbound}.
If $r \in[s,2s]$, then $m(B_{d_0}(x,r)) \ge\frac{1}2 r r_x$, while if
$r>2s$ then $B_{d_0}(y,r/2) \subset B_{d_0}(x,r)$, so in either case we
have the lower bound.


Finally, if $x \in\partial U$ and $r>0$ then we can find a vertex $y$
such that
$d_0(x,y) < r/2$ and $r_y < r/2$, and use the bounds for
$m(B_{d_0}(y,\cdot))$.
\end{pf}

\subsection{Poincar\'e inequality}

The strong Poincar\'e inequality states that for any
differentiable $f$ on $B=B_{d_0}(x,r)$ we have
%
\begin{equation}
\label{strongpi} \int_B \bigl\llvert f(x) - \bar{f}\bigr
\rrvert ^2 m(dx) \leq C r^2 \int_B
\bigl\llvert f'(x)\bigr\rrvert ^2 m(dx),
\end{equation}
where $\bar{f} = \frac{1}{m(B)} \int_B f m(dx)$ is the mean of $f$. A
well-known technique due to Jerison (see \cite{Jerison}, Section~5 and also
\cite{SC}, Section~5.3, for a simpler proof) shows that for spaces
satisfying the doubling property, this follows from the weak Poincar\'e
inequality which we now prove.

\begin{theorem}[(Weak Poincar\'e inequality)]
\label{T:weakpi}
There exist positive constants $C=C(D,\eta)$ and $C'=C'(D,\eta)$ such
that for any $x_0 \in X$ and $r \in(0,1)$
with $\Beuc(x_0,Cr) \subset\carr(G)$,
and all $f$ piecewise differentiable on $B_{d_0}(x_0,Cr)$ we have
\[
\int_{B_{d_0}(x_0,r)} \bigl\llvert f(x) - \bar{f}\bigr\rrvert
^2 m(dx) \leq C' r^2 \int
_{B_{d_0}(x_0,Cr)} \bigl\llvert f'(x)\bigr\rrvert
^2 m(dx).
\]
\end{theorem}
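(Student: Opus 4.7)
The plan is to follow the standard ``witness path'' route to a Poincar\'e inequality on a Dirichlet space. To every pair of points $x, y \in B := B_{d_0}(x_0, r)$ I would associate a curve $\gamma_{xy} \subset X$ joining $x$ to $y$, of $d_0$-length at most $Cr$, staying inside the enlarged ball $B' := B_{d_0}(x_0, C_0 r)$ for some $C_0 = C_0(D, \eta)$; the constant $C$ in the hypothesis $\Beuc(x_0, Cr) \subset \carr(G)$ is then chosen $\geq C_0$. A pointwise Cauchy--Schwarz bound
\[
(f(x) - f(y))^2 \leq \operatorname{length}(\gamma_{xy}) \int_{\gamma_{xy}} (f'(z))^2 \, d\ell(z),
\]
combined with the identity $\int_B (f - \bar f)^2 \, dm = \frac{1}{2m(B)} \int_B \int_B (f(x) - f(y))^2 \, dm(x)\, dm(y)$ and Fubini, yields
\[
\int_B (f - \bar f)^2 \, dm \leq \frac{Cr}{m(B)} \int_{B'} (f'(z))^2 \, \Phi(z) \, d\ell(z),
\]
where $\Phi(z) := \int_B \int_B \mathbf{1}\{z \in \gamma_{xy}\} \, dm(x)\, dm(y)$. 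Since on each edge $e_z$ one has $dm = |e_z| \, d\ell$, the weak Poincar\'e inequality reduces to the density bound $\Phi(z) \leq C_1 \, r \, |e_z| \, m(B)$ for every $z \in X$.

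The witness path $\gamma_{xy}$ would be built exactly as in the proofs of \cref{P:bilipschitz,L:inneruniform}: take the Euclidean segment $[x, y]$, list its successive intersections $x = z_0, z_1, \ldots, z_k = y$ with $X$, and on each sub-chord $[z_i, z_{i+1}]$ (which lies inside a single face by construction) replace the chord by the shorter of the two $d_0$-arcs along that face's boundary. \cref{P:bilipschitz} then gives $\operatorname{length}(\gamma_{xy}) \leq C|x - y| \leq 2Cr$, and \cref{L:face_diam} shows that $\gamma_{xy}$ stays within Euclidean distance $\asymp$ (a typical edge length) of $[x, y]$, so $\gamma_{xy} \subset B'$ once $C_0$ is large. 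Note that \cref{doubling} ensures that replacing $B$ by $B'$ on the right-hand side costs at most a constant factor in the measure, which is consistent with a weak-Poincar\'e rather than strong-Poincar\'e statement.

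The main obstacle is the density bound $\Phi(z) \leq C_1 \, r \, |e_z| \, m(B)$. The condition $z \in \gamma_{xy}$ forces $[x, y]$ to cross one of the (at most two) faces containing $e_z$, entering and exiting through a specific pair of boundary edges placed on opposite sides of $e_z$; in particular \cref{L:face_diam} forces the line through $x$ and $y$ to pass within Euclidean distance $\asymp |e_z|$ of $z$. Parametrizing lines in $\R^2$ by $(\theta, d) \in [0, \pi) \times \R$ with Haar measure $d\theta \, dd$, the set of admissible lines has measure $\asymp |e_z|$. For each admissible line $\ell$, \cref{L:sausage} forces successive intersections of $\ell$ with $X$ to be well separated and \cref{L:noacute} prevents $\ell$ from running tangent to many edges; accounting for the $|e_x||e_y|$ weights in $dm(x)\, dm(y)$ then bounds the $dm \otimes dm$-mass of pairs $(x, y) \in (X \cap B')^2$ routed through $z$ in a direction near $\ell$ by $O(r \, m(B))$. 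Integrating over $(\theta, d)$ yields the claimed bound, with all constants depending only on $D$ and $\eta$.
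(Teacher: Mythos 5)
Your proposal follows the same skeleton as the paper's proof --- the $\int_B\int_B(f(x)-f(y))^2\,dm\,dm$ identity, Cauchy--Schwarz along a witness path, and a path-density bound --- but it omits the one genuinely new step, and the argument you substitute for it fails. The density bound $\Phi(z) \le C_1\, r\, |e_z|\, m(B)$ for paths built directly from the Euclidean chord $[x,y]$ is false in general. A good embedding can contain a chain of consecutive edges of $X$ lying on a single line $\ell$, with lengths decaying geometrically from $\asymp r$ down to $\delta := |e_z|$ and back up; the angle $\pi$ between two successive chain edges at a vertex is permitted, because those edges are separated in the cyclic order by edges leaving $\ell$, so $\pi$ never appears as an inner angle of a face and conditions (a),(b) are respected. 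If $e_1,e_2$ are the two long chain edges on either side of $e_z$, then for every $(x,y)\in e_1\times e_2$ the chord $[x,y]$ lies on $\ell$ and $\gamma_{xy}$ passes through $e_z$, so
\[
\Phi(z)\ \ge\ m(e_1\cap B)\,m(e_2\cap B)\ \asymp\ r^4\ \gg\ r^3\delta\ \asymp\ r\,|e_z|\,m(B)
\]
once $\delta\ll r$. The root cause is that $m\otimes m$ is a one-dimensional-supported measure on $X\times X$, so the pushforward to $(\theta,d)$-line-space can carry an atom on the (measure-zero) set of lines containing two long edges; your ``$O(r\,m(B))$ per admissible line'' bound cannot hold there, and the Crofton integral does not close.

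What the paper does instead is exactly the step your proposal is missing, namely \cref{L:close_to_leb} and the randomization it enables: the path is routed along an $X$-approximation of $[\hat x,\hat y]$, where $\hat x$ (resp.\ $\hat y$) is chosen \emph{uniformly on the union of the two faces} incident to the edge containing $x$ (resp.\ $y$). \cref{L:close_to_leb} shows that $\hat x,\hat y$ then have planar density at most $C/r^2$ on a ball of radius $\asymp r$, and the event that $[\hat x,\hat y]$ passes within $C_1\rho_z$ of $z$ therefore has probability $\lesssim \rho_z/r$ uniformly in the geometry --- because the distance from the chord of two points with bounded density in a disc to a fixed centre is a continuous random variable with bounded density at $0$. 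This two-dimensional smoothing is what defeats the degenerate aligned-edge configurations and closes the estimate $\P(z\in\gamma_{xy})\le C\rho_z/r$; without it your proof, as written, does not go through.
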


We begin with the following lemma.

\begin{lemma}\label{L:close_to_leb}
Let $x$ have law $\frac{m(dx)}{m(B)}$ on some set $B\subset X$, and
conditioned on $x$ let $\hat x$ be uniform in the union of the two faces
incident to the edge containing $x$. Then there is some constant
$C=C(D,\eta)$ so that the law of $\hat x$ is bounded by
$\frac{C\leb}{m(B)}$, where $\leb$ is the usual Lebesgue measure on
$\R^2$.
\end{lemma}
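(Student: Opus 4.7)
The plan is to compute the density of the law of $\hat x$ with respect to two-dimensional Lebesgue measure at an arbitrary point $y\in\R^2$ and to bound it by a constant times $1/m(B)$.

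First I would unpack the two-step sampling. For any edge $e=[u,v]$, the probability that $x$ lands on $e$ is $m(e\cap B)/m(B) \leq m(e)/m(B) = |e|^2/m(B)$, since the measure $m$ restricted to $[u,v]$ is $|u-v|$ times Lebesgue on the segment, giving $m(e)=|e|^2$. Conditioned on $x\in e$, the point $\hat x$ is uniform on the union $F_e := f_1\cup f_2$ of the (at most two) faces incident to $e$. By \cref{L:face_diam}, $\leb(f_i)\geq |e|^2/C$, so the density of $\hat x$ conditioned on $x\in e$ is bounded by $C/|e|^2$ at every $y \in F_e$.

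Next I would combine these bounds and sum over edges. The total density of $\hat x$ at $y\in\R^2$ satisfies
\[
\frac{d(\text{law of }\hat x)}{d\leb}(y)
\;=\; \sum_{e\in E} \frac{m(e\cap B)}{m(B)} \cdot \frac{\mathbf{1}\{y\in F_e\}}{\leb(F_e)}
\;\leq\; \frac{C}{m(B)}\,\#\{e\in E: y \in F_e\}.
\]
Thus it remains to bound the number of edges $e$ for which one of the two faces adjacent to $e$ contains $y$. Assuming $y$ lies in the interior of a single face $f$ (the boundary case is $\leb$-null), the edges with $y\in F_e$ are exactly the edges of $f$, and condition (a) of a good embedding bounds the number of such edges by $2\pi/\eta$. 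This gives the desired bound with constant $C(D,\eta)\cdot 2\pi/\eta$.

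I expect no serious obstacle here: the argument is a direct accounting using $m(e)=|e|^2$, the comparison $\leb(f)\gtrsim|e|^2$ from \cref{L:face_diam}, and the bounded face degree from the no-flat-angles condition. The only mildly delicate point is to ensure that $y$ is counted only for the edges of its own face, which uses that a good embedding has no edge-crossings and that face boundaries are $\leb$-null.
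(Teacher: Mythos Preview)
Your proposal is correct and follows essentially the same approach as the paper's proof: both bound $\P(x\in e)\le |e|^2/m(B)$, use \cref{L:face_diam} to compare $|e|^2$ with the area of an incident face, and then sum over the at most $2\pi/\eta$ edges of the face containing $y$. The only cosmetic difference is the order of the two estimates: the paper bounds the conditional density on a face $f$ by $1/\leb(f)$ and then sums $|e_i|^2/\leb(f)$ over the edges of $f$, whereas you first cancel $|e|^2$ against $\leb(F_e)$ and then count edges; the content is identical.
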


\begin{pf}
For an edge $e$ of $X$, we have that $\P(x\in e) = \frac{m(e\cap B)}{m(B)}
\leq\frac{|e|^2}{m(B)}$ (with equality holding when $e \subset B$). If
$e$ is incident to some face $f$ then the conditional contribution to the
density of $\hat x$ in $f$ is at most $1/\leb(f)$, and so the density on
a face $f$ surrounded by edges $e_1,\ldots,e_k$ is at most $\frac{1}{m(B)}
\sum\frac{|e_i|^2}{\leb(f)}$. The number of edges surrounding a face is
at most $2\pi/\eta$, and the square of each is comparable to the area of
$f$, giving the claim.
\end{pf}

\begin{pf*}{Proof of Theorem~\ref{T:weakpi}}
We let $B=B_{d_0}(x_0,r)$. Let $x,y$ be independent points chosen in $B$
with law $\frac{m(dx)}{m(B)}$. We start with the simple identity
\[
\int_B |f(x)-\bar{f}|^2 m(dx) =
\frac{m(B)}{2} \E\bigl|f(x)-f(y)\bigr|^2,
\]
that follows from expanding. Let $\gamma=\gamma_{xy}$ be some (possibly
random) path in $X$ between $x$ and $y$, then $f(y)-f(x) = \int_\gamma
f'(z) \,dz$, where $dz$ is the length element along $\gamma$. Applying
Cauchy--Schwarz gives
\[
\bigl|f(x)-f(y)\bigr|^2 \leq|\gamma| \int_\gamma\bigl|f'(z)\bigr|^2
\,dz,
\]
where $|\gamma|$ denotes the length of $\gamma$.

To construct the path, we need to consider two cases. If
$B_{d_0}(x_0,r)$ contains a single vertex of the graph, then the graph
in $B_{d_0}(x_0,r)$ is a star, and there is an obvious choice of path
$\gamma_{xy}$. If there are at least two vertices, we proceed as
follows.
Let $\hat x$ (resp., $\hat y$) be uniformly chosen in the union of the
two faces of $X$ incident to $x$ (resp., to $y$). The straight line
segment $\hat x \hat y$ begins at a face containing $x$, ends at a face
containing $y$, and possibly passes through some other faces in
between. As in the proof of Proposition~\ref{P:bilipschitz}, we can
approximate this
line segment by a path $\gamma_{xy}$ in $X$, which stays in the
boundaries of faces crossed by the line segment.

We first observe that due to Lemma~\ref{L:sausage} we have that $\hat
x,\hat y
\in B_0 := \{u \in\R^2: |u-x_0|<C_0 r\}$ for some $C_0=C_0(D,\eta
)\geq
1$, and $B_0 \subset\carr(G)$ by our assumptions. By increasing
$C_0$, we can guarantee that $\gamma_{xy}$ also does not leave $B_0$, and
as in Proposition~\ref{P:bilipschitz}, we have $|\gamma_{xy}|\leq
Cr$. We
shall see below that $\P(z\in\gamma_{xy}) \leq\frac{C \rho_z}{r}$ where
$\rho_z$ is the length of the edge containing $z$ (we neglect the measure
$0$ set of vertices). Given that we conclude the proof as follows:
\begin{eqnarray*}
\int_B \bigl|f(x)-\bar{f}\bigr|^2 m(dx) &\leq& C m(B) \E
\biggl[ r \int_\gamma\bigl|f'(z)\bigr|^2 \,dz
\biggr]
\\
&\leq& C r m(B) \int_{z\in B_0} \frac{C\rho_z}{r}
\bigl|f'(z)\bigr|^2 \,dz
\\
&\leq& Cr^2 \int_{B_0} \bigl|f'(z)\bigr|^2
m(dz),
\end{eqnarray*}
since $\rho_z\, dz = m(dz)$, and $m(B) \leq Cr^2$.

To bound the probability that $z\in\gamma_{xy}$, note that the faces
incident to $z$ are contained in $\{u \in\R^2: |u-z|\leq
C_1\rho_z\}$. Let $A$ be the event that the segment $\hat x\hat y$
intersects a face incident to $z$, and $A'$ the event that the segment
passes within distance $C_1\rho_z$ of $z$, so that $A\subset A'$. Let
$\hat m$ be the law of $\hat x$ and $\hat y$. By Lemma~\ref{L:close_to_leb},
we have that
\[
\hat m \leq\frac{C}{m(B)} \leb\leq\frac{C_2}{r^2} \leb,
\]
and $\supp\hat m \subset B_0 \subset\{|\hat x-z|\leq2C_0 r\}$. We have
now
\begin{eqnarray*}
\P(z\in\gamma_{xy}) &\leq&\E1_A \leq\E1_{A'} =
\iint1_{A'} \,d\hat m \times d\hat m
\\
&\leq&\int_{|\hat x-z|\leq2C_0 r} \int_{|\hat y-z| \leq2C_0 r}
1_{A'} \biggl(\frac{C_2}{r^2} \biggr)^2 \,d\hat x \,d\hat y.
\end{eqnarray*}
By scaling and translating this is $(C_2/2C_0)^2$ times the
probability that the segment between two uniform points $u,v\in U$ passes
within $C_1\rho_z/r$ of the origin. For such $u,v$, the distance between
the segment and the origin is a continuous random variable with finite
density, so the distance is at most $C_1\rho_z/r$ with probability
at most $C\rho_z/r$.
\end{pf*}

\subsection{Heat kernel estimates}

Finally, we are able to deduce estimates for the heat kernel of the cable
process on $X$. Let $q_t(x,y)$ denote the heat kernel for the Markov
process $\{Z_t\}_{t \geq0}$ associated with $(\oX,m,\sE)$, that is,
$q_t(x,\cdot)$ is the density (with respect to $m$) of $Z_t$
conditioned on
$Z_0=x$. For a set $A\subset\oX$, we let $q_t^A(x,y)$ denote the heat kernel
for the process killed when it exits $A$. (If $A= \oX$, then $q^A$ is
just the
unkilled heat kernel.)

\begin{theorem}\label{T:heat}
There exists constants $c,C$ depending only on $D,\eta$ such that for any
$x_0 \in X$ and $r>0$
we have that for any $t \leq r^2$ and $x,y \in X\cap\Beuc(x_0, \sqrt{t})$,
\[
q_t^A(x,y) \geq\frac{c}{m(\Beuc(x_0,\sqrt{t}))},
\]
where $A = \oX\cap\Beuc(x_0, C r)$.
\end{theorem}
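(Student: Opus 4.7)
The plan is to quote the machinery of Sturm \cite{Sturm}: for a local Dirichlet space that is volume doubling and satisfies a scale-invariant Poincaré inequality on its intrinsic balls, the associated heat kernel satisfies two-sided Gaussian (Aronson-type) bounds; the desired near-diagonal lower bound is the on-diagonal specialization of these bounds. The two structural ingredients are already in hand: \cref{doubling} gives volume doubling for the measure $m$ with respect to the intrinsic metric $d_0$, and \cref{T:weakpi} gives the weak Poincaré inequality. First I would upgrade the weak inequality to the strong Poincaré inequality \eqref{strongpi} via the Jerison chaining argument (as in \cite[Section 5.3]{SC}), which uses only the doubling property; note that since our hypothesis $\Beuc(x_0, Cr)\subset\carr(G)$ guarantees that the enlarged ball $B_{d_0}(x_0, Cr)$ still lies in the region where \cref{doubling,T:weakpi} apply, the chaining argument can be carried out inside $\carr(G)$ without meeting the embedding's boundary.

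With doubling and strong Poincaré in place on balls contained in the carrier, I would invoke Sturm's theorem to obtain the two-sided Gaussian bound for the free cable heat kernel $q_t$: for all $x,y\in X$ and $t>0$ such that $\Beuc(x_0, Cr)\subset\carr(G)$ (where $r=\sqrt{t}$ would do), we have
\[
q_t(x,y) \;\geq\; \frac{c}{m(B_{d_0}(x,\sqrt{t}))} \exp\!\Big(-C\, \tfrac{d_0(x,y)^2}{t}\Big).
\]
For $x,y\in X\cap \Beuc(x_0,\sqrt{t})$, the bi-Lipschitz equivalence $d_0\asymp|\cdot|$ from \cref{P:bilipschitz} gives $d_0(x,y)\leq C\sqrt{t}$, so the exponential is bounded below by a constant, and doubling shows $m(B_{d_0}(x,\sqrt{t}))\asymp m(B_{d_0}(x_0,\sqrt{t}))\asymp m(\Beuc(x_0,\sqrt{t}))$. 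This yields the required lower bound for the \emph{free} heat kernel.

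It remains to pass from $q_t$ to $q_t^A$ with $A=X\cap\Beuc(x_0,Cr)$. I would use the strong Markov decomposition
\[
q_t(x,y) \;=\; q_t^A(x,y) \;+\; \E_x\bigl[ q_{t-\sigma}(Z_\sigma, y)\,;\,\sigma\leq t\bigr],
\]
where $\sigma$ is the exit time of $Z$ from $A$. Starting from $x\in\Beuc(x_0,\sqrt{t})$ with $t\leq r^2$, the event $\{\sigma\leq t\}$ requires $Z$ to travel $d_0$-distance at least $(C-1)r$ in time $\leq t$, and the Gaussian upper half of Sturm's estimate (combined with a standard chaining to handle the excursion being outside a single ball) bounds $\P_x(\sigma\leq t)$ by $C\exp(-c C^2)$, which can be made arbitrarily small by choosing $C=C(D,\eta)$ large. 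Multiplying this small probability by the sup of $q_{t-\sigma}(Z_\sigma,y)\leq C/m(\Beuc(y,\sqrt{t-\sigma}))$ (again Sturm's upper bound, using doubling to compare volumes) and absorbing the resulting term into half of the lower bound for $q_t(x,y)$ yields $q_t^A(x,y)\geq c/m(\Beuc(x_0,\sqrt t))$.

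The main obstacle is the routine but slightly delicate verification that Sturm's global hypotheses can be applied locally: his theorems are stated for Dirichlet spaces that are doubling and Poincaré \emph{on all scales}, whereas in our setting both inequalities are only valid for balls whose Euclidean enlargement fits inside $\carr(G)=U$. One has to either restrict attention to the sub-Dirichlet space obtained by killing on $\partial U$, or track the constants through the proofs to see that they depend only on the doubling/Poincaré data on the specific scale $r$ in question. Either route is standard, but it is the point at which one must be careful about the difference between the inner-uniform geometry of $X$ near $\partial U$ and the global picture needed by Sturm.
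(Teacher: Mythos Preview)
Your approach differs from the paper's. The paper does not derive free Gaussian bounds and then pass to the killed kernel via the Markov decomposition; instead it uses doubling and the strong Poincar\'e inequality to obtain a (local) parabolic Harnack inequality via Sturm's Theorem~3.5, and then invokes Theorem~3.2 of Barlow--Grigor'yan--Kumagai \cite{BGK} (the implication (c)$\Rightarrow$(b) with $\tau(t)=t^2$), which directly produces the near-diagonal lower bound for the \emph{killed} kernel $q^A_t$. Parabolic Harnack is an inherently local statement, and the BGK framework turns it into a local heat-kernel lower bound without ever needing the free kernel; the issue you flag as the ``main obstacle'' (that doubling and Poincar\'e hold only on balls fitting inside $\carr(G)$) is thereby absorbed automatically. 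A final appeal to \cref{P:bilipschitz} converts $d_0$-balls to Euclidean balls.

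Your free-to-killed step, as written, has a gap. You propose to bound $\E_x\bigl[q_{t-\sigma}(Z_\sigma,y);\sigma\le t\bigr]$ by $\P_x(\sigma\le t)\cdot\sup q_{t-\sigma}(Z_\sigma,y)$, taking for the second factor the on-diagonal upper bound $C/m(\Beuc(y,\sqrt{t-\sigma}))$. But this supremum is not controlled: as $\sigma\uparrow t$ the volume $m(\Beuc(y,\sqrt{t-\sigma}))\to 0$, so the on-diagonal bound blows up and cannot be absorbed into half of the lower bound. The repair is standard --- use instead the off-diagonal Gaussian upper bound, exploiting that $d_0(Z_\sigma,y)\ge (C-2)r$ so that the factor $\exp\bigl(-c\,d_0(Z_\sigma,y)^2/(t-\sigma)\bigr)$ kills the volume blow-up uniformly in $\sigma$ --- but it is not what you wrote. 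Once repaired, your route does work; it is simply longer, and you still owe the localization argument for Sturm's hypotheses that the paper's BGK detour sidesteps.
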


\begin{pf}
This is obtained by combining Theorem~3.5 of \cite{Sturm} with
\eqref{strongpi} and Lemma~\ref{doubling} (giving parabolic Harnack
inequality), and then appealing to Theorem~3.2 in \cite{BGK} [the
assertion that (c) implies (b) is what we use with the function
$\tau(t)=t^2$]. Finally, using Proposition~\ref{P:bilipschitz} to move
from balls in $d_0$ to Euclidean balls.
\end{pf}

\section{Harmonic measure and exit time of discrete discs}
\label{sec-harmonic}

In this section, we prove Theorems \ref{T:exit_arg} and \ref{T:exit_time}. Let
$G=(V,E)$ be a
planar graph with a $(D,\eta)$-good embedding and let $(\oX,d_0)$ be the
associated metric space. We consider the cable process $Z$ on
$G$ defined in Section~\ref{sec-cableprocess}. We slightly abuse
notation, and
use $\tau_A$ to denote the hitting time of $A$ by the cable process,
that is,
$\tau_A = \inf\{t : Z_t\in A\}$. Recall that the restriction of $Z_t$ to
$V$ is the simple random walk, and so when $A\subset V$, the law of
$X_{\tau_A}$ is the same for the cable process and for the simple random
walk.

For $u\in\R^2$, radius $r>0$ and an interval of angles $I \subset
\R/(2\pi\Z)$ let\break $\cone(u,r,I)$ denote the intersection of $X$ and
the cone
of radius $r$ centered at $u$ with opening angles $I$, that is,
\[
\cone(u,r,I) = \bigl\{ v \in X : |v-u| \leq r \mbox{ and } \arg(v-u) \in I \bigr
\}.
\]
A \emph{wide cone} is a cone where $|I| \geq\pi-\eta$. By definition, if
$u$ is a vertex in a good embedding then there is an edge containing $u$
entering every wide cone with tip at~$u$.

\begin{lemma}\label{L:conevolume}
For any vertex $u\in V$ and any wide cone $A=\cone(u,r,I)$ such that
$\Beuc(u,r) \subset\carr(G)$, we have
\[
m(A) \asymp r(r \vee r_u).
\]
\end{lemma}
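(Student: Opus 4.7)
The plan is to prove the upper and lower bounds separately, with the lower bound requiring a case split based on whether $r \leq r_u$ or $r > r_u$.

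For the upper bound, every edge meeting $A$ lies in $\Beuc(u, r)$, so it suffices to bound $m(\Beuc(u, r) \cap X)$. When $r \leq r_u$, only the $O(1)$ edges emanating from $u$ meet $\Beuc(u,r)$, each of length $\asymp r_u$, contributing at most $C r \cdot r_u$. When $r > r_u$, \cref{L:sausage} and \cref{L:face_diam} ensure that faces touching $\Beuc(u, r)$ have diameter $O(r)$ and fit inside $\Beuc(u, Cr)$ with disjoint interiors; their total area is $O(r^2)$, and summing $|e|^2 \leq C \leb(f)$ over the corresponding edges yields $m \leq C r^2$. In both cases $m(A) \leq Cr(r \vee r_u)$.

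For the lower bound when $r \leq r_u$, condition (a) forces at least one edge $e$ emanating from $u$ to enter the angular range $I$; otherwise two consecutive edges at $u$ would bound a face with inner angle greater than $\pi - \eta$, violating the hypothesis. This edge has length $|e| \asymp r_u$ (by condition (b) and bounded degree of $u$), and its intersection with $\Beuc(u,r)$ has Euclidean length $r$, so $m(A \cap e) = r \cdot |e| \asymp r \cdot r_u = r(r \vee r_u)$.

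For the lower bound when $r > r_u$, we aim at $m(A) \gtrsim r^2$. Pick $y = u + (r/2) e^{i \theta_0}$ where $\theta_0$ is the midangle of $I$; the wide-cone hypothesis provides $c_\eta > 0$ with $\Beuc(y, c_\eta r) \subset A$. If some point $x_0 \in X$ lies in this ball, then by \cref{P:bilipschitz} we have $B_{d_0}(x_0, c_\eta r/(2 C_1)) \subset \Beuc(y, c_\eta r) \subset A$, and the lower bound in \cref{doubling} gives $m(B_{d_0}(x_0, c_\eta r/(2 C_1))) \gtrsim r^2$. Otherwise, $\Beuc(y, c_\eta r)$ lies in the interior of some face $f_0$; then $\diam(f_0) \geq 2 c_\eta r$, and by \cref{L:face_diam} combined with the bounded number of sides per face and condition (b), $f_0$ has bounded aspect ratio, all its edges have length $\asymp r$, and $f_0 \subset \Beuc(y, Cr)$.

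The main obstacle is this last subcase: one must show that the length of $\partial f_0 \cap A$ is $\gtrsim r$, from which $m(\partial f_0 \cap A) = \sum_e |e| \cdot \text{length}(e \cap A) \gtrsim r \cdot r = r^2$ follows. A direct geometric argument using the convexity of both $A$ and $f_0$, together with $A \cap f_0 \supset \Beuc(y, c_\eta r)$ and the fact that $y$ lies at Euclidean distance $\asymp r$ from $\partial A$, carries this out: since $r > r_u$ we have $u \notin f_0$, so the segment from $u$ to $y$ crosses $\partial f_0$ at a point of $A$, identifying a specific edge of $f_0$ on which the comparability of all relevant scales forces a constant fraction to lie inside $A$.
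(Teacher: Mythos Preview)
Your upper bound and your $r \leq r_u$ lower bound are essentially the paper's (the paper only says the latter is ``easy and we omit the details''). For the lower bound when $r > r_u$, however, you take a genuinely different route. The paper builds a simple path from $u$ inside the cone (always following an edge with direction in $I$), and uses it to show that for \emph{every} face $f$ the boundary $\partial f$ must cross inside $A$ each circle $C_s = \{|z-u|=s\}$ that meets $f$; this yields a face-by-face comparison $m(A\cap\partial f) \gtrsim \leb(A'\cap f)$, and summing gives $m(A)\gtrsim r^2$. Your argument instead locates a single disk $\Beuc(y,c_\eta r)$ deep in the cone and splits according to whether it meets $X$: if so you invoke \cref{doubling}, if not you analyze the one large face $f_0$ containing it. Your approach is more direct and recycles the doubling lemma, while the paper's path argument is self-contained and gives the stronger face-wise inequality (not used elsewhere).

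There is one genuine gap in your Case 2. The implication ``since $r > r_u$ we have $u \notin f_0$'' is false as stated: if $u$ were a vertex of $f_0$, the edges of $f_0$ at $u$ have length $\leq D r_u$, while the fact that $f_0 \supset \Beuc(y,c_\eta r)$ together with \cref{L:face_diam} forces every edge of $f_0$ to have length $\gtrsim r$; this only yields $r \lesssim r_u$, not $r \leq r_u$. So you need to treat the range $r_u < r \leq C r_u$ separately (your edge-in-cone argument already gives $m(A) \gtrsim r r_u \asymp r^2$ there) and reserve the $f_0$-argument for $r > C r_u$, where $u$ genuinely cannot be a vertex of $f_0$. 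Once that is in place, the crossing point $z \in [u,y] \cap \partial f_0$ satisfies $|z-u| \geq d(u,\partial f_0) \gtrsim r$ by \cref{L:sausage} (applied to the edges of $f_0$, none of which now contain $u$), hence $\Beuc(z,c'r) \subset A'$, and since $\partial f_0$ is a closed curve through $z$ with edges of length $\asymp r$, a portion of $\partial f_0$ of length $\gtrsim r$ lies in $A$. You should make this use of \cref{L:sausage} explicit; ``comparability of all relevant scales'' hides the key step.
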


\begin{pf}
The case $r \leq r_u$ is easy and we omit the details. Assume $r \geq
r_u$ and write $A'$ for the Euclidean cone $A' = \{ x\in\R^2 :
|x-u|\leq
r \mbox{ and } \arg(x-u)\in I\}$. For any face $f$, we will show that
$m(A\cap\partial f) \geq c \leb(A'\cap f)$ for some $c(D,\eta)$. This
implies the lower bound, since summing over all faces gives $m(A) \geq
cr^2$.

Let $\ell= \diam(A'\cap f) \leq\diam(f)$, and note that every edge of
$f$ has length at least $c\ell$ for some $c$. Consider the circle $C_s =
\{z : |z-u|=s\}$. We have that the length $|C_s\cap f \cap A'|$ is at most
$C\ell$, and is nonzero for $s$ in some interval $J$. Integrating over
$s$ gives
\[
\leb\bigl(A'\cap f\bigr) = \int_J
\bigl|C_s\cap f \cap A'\bigr| \leq C \ell|J|.
\]

We next argue that $\partial f$ must cross inside $A$ any circle $C_s$
that intersects $f\cap A'$. To see this, note that we can construct a
path from
$u$ taking only edges with directions in $I$ until we exit $A$ after
finitely many steps [since $A\subset\carr(G)$]. The face $f$ is
restricted to one side of the path, and so $\partial f$ intersects
$A\cap
C_s$. It now follows that the length of $A\cap\partial f$ is at least
$|J|$, and since the length of edges of $f$ is at least $c\ell$ we get
$m(A\cap\partial f)\geq c\ell|J|$, and the lower bound follows.

For the upper bound, we prove only the case $r \geq r_u$, as the other is
immediate. By Lemma~\ref{L:sausage} every edge intersecting $A$ has
length at
most $Cr$, and all incident faces are contained in $\Beuc(u,C' r)$. For
any such edge $e$, taking all of $m(e)$ still gives at most the area of the
faces containing $e$, and since each face is counted a bounded number of
times, the claim follows by summing over the edges.
\end{pf}

\begin{corollary}\label{coneannulusvolume}
There exists a constant $c=c(D,\eta)>0$ such that for any vertex $u\in V$
and any $r\geq r_u$ with $\Beuc(u,r) \subset\carr(G)$, and any wide cone
$A=\cone(u,r,I)$ we have
\[
m \bigl(A \setminus\Beuc(u,cr) \bigr) \geq cr^2.
\]
\end{corollary}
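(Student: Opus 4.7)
The plan is to write $m(A\setminus\Beuc(u,cr))=m(A)-m(A\cap\Beuc(u,cr))$ and apply Lemma~\ref{L:conevolume} to each piece. Provided $c\leq 1$, the intersection $A\cap\Beuc(u,cr)$ is exactly the wide sub-cone $\cone(u,cr,I)$, which has the same (wide) opening $I$, so Lemma~\ref{L:conevolume} applies to both.

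For the whole cone, the hypothesis $r\geq r_u$ together with $\Beuc(u,r)\subset\carr(G)$ gives, via Lemma~\ref{L:conevolume},
\[
m(A)\;\geq\; c_0\, r(r\vee r_u)\;=\;c_0 r^2
\]
for some $c_0=c_0(D,\eta)>0$. For the sub-cone, Lemma~\ref{L:conevolume} (applied with radius $cr$, which still satisfies $\Beuc(u,cr)\subset\carr(G)$) yields
\[
m\bigl(\cone(u,cr,I)\bigr)\;\leq\; C_1\, cr\bigl(cr\vee r_u\bigr).
\]
The key observation is that $r_u\leq r$ by assumption and $cr\leq r$, so $cr\vee r_u\leq r$, and hence $m(\cone(u,cr,I))\leq C_1 c r^2$. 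Thus
\[
m\bigl(A\setminus\Beuc(u,cr)\bigr)\;\geq\; c_0 r^2-C_1 c r^2.
\]

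Choosing $c=\min\bigl(1,\,c_0/(2C_1)\bigr)$ makes the right-hand side at least $(c_0/2)r^2$, and taking $c':=\min(c,c_0/2)$ as the constant in the statement finishes the proof. There is no real obstacle here: the point is simply that Lemma~\ref{L:conevolume} is linear in the radius once one is below scale $r_u$ and quadratic above it, so a small enough concentric sub-cone contains only a small fraction of $m(A)$ regardless of whether $cr$ lies above or below $r_u$.
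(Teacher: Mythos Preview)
Your proof is correct and is exactly the argument the paper intends: the corollary is stated without proof, as an immediate consequence of Lemma~\ref{L:conevolume}, and the subtraction $m(A)-m(\cone(u,cr,I))$ together with the two-sided estimate $m(\cone(u,\rho,I))\asymp\rho(\rho\vee r_u)$ is the only natural way to read it.
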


\begin{lemma}\label{forwardincone}
There exist constants $c=c(D,\eta)>0$ such that for any interval
$I$ with $|I|=\pi-\eta$, any vertex $u\in V$ and any $r\geq r_u$ satisfying
$\Beuc(u,r) \subset\carr(G)$ we have
\[
\prob_u ( \tau_S < \tau_{V \setminus\Veuc(u,2r)} ) \geq c,
\]
where
\[
S = V \cap\cone(u,r,I) \setminus\Beuc(u,cr).
\]
\end{lemma}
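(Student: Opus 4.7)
The strategy is to apply the heat kernel lower bound of \cref{T:heat} at a suitable scale, combine it with the volume estimate of \cref{coneannulusvolume}, and conclude by identifying the last vertex visited by the cable process before some fixed time $t \asymp r^2$ as an element of $S$.

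First I would dispose of the small-$r$ regime $r_u \leq r \leq C_0 r_u$, for a large constant $C_0 = C_0(D,\eta)$, by a one-step argument. By the no-flat-angles condition (a) combined with \cref{L:noacute}, consecutive neighbors of $u$ differ in angle by at most $\pi - \eta$, so every interval $I$ of length $\pi - \eta$ contains the direction to some neighbor $v$ of $u$. Since $\deg(u)$ is bounded by a constant depending only on $D,\eta$ (via \cref{L:noacute}), the random walk's first step goes to such $v$ with probability at least $c_0>0$, and by (b), $|v-u|\in[r_u,Dr_u]\subset [cr,r]$ for suitable $c=c(D,\eta)$, so $v\in S$.

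For the main regime $r\geq C_0 r_u$, set $\rho = r/C_\ast$ with $C_\ast = C_\ast(D,\eta)\geq C$ (where $C$ is the constant in \cref{T:heat}) chosen so that $\Beuc(u, C\rho)\subset \Beuc(u,r)\subset \carr(G)$. Applying \cref{T:heat} at scale $\rho$ and time $t=\rho^2$ gives $q_t^A(u,y)\geq c_1/m(\Beuc(u,\rho))$ for $y\in X\cap \Beuc(u,\rho)$, where $A=X\cap \Beuc(u,C\rho)\subset X\cap \Beuc(u,r)$. Any trajectory surviving in $A$ up to time $t$ has visited no vertex outside $\Veuc(u,r)\subset \Veuc(u,2r)$. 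Since $\rho\geq r_u$, \cref{coneannulusvolume} yields $m(T)\geq c_3\rho^2\asymp m(\Beuc(u,\rho))$ for $T=\cone(u,\rho,I)\setminus \Beuc(u,c_2\rho)$, and integrating the heat kernel over $T$ gives
\[
\prob_u\bigl(Z_t\in T,\ \tau_{X\setminus A}>t\bigr)\geq c_4.
\]
Setting $c:=c_2/C_\ast$, every point of $T$ lies at Euclidean distance at least $cr$ from $u$.

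On this event $Z_t$ lies on some edge $e$, and the last vertex $v$ visited by the cable process before time $t$ is an endpoint of $e$. Using $r\geq C_0 r_u$ to ensure $u\notin e$, \cref{L:sausage} and \cref{L:chorizo} give $|v-u|\in[c_5 r,r]$ for some $c_5=c_5(D,\eta)>0$. The remaining point is to verify $\arg(v-u)\in I$: the angle the edge $e$ subtends at $u$ is controlled via the law of cosines together with \cref{L:sausage} (which lower-bounds $d(u,e)$ by a multiple of $|e|$), and by restricting $T$ to a slightly narrower sub-cone of $I$ (for which a corresponding version of \cref{coneannulusvolume} still yields volume $\gtrsim \rho^2$) one can force at least one endpoint of $e$ into $\cone(u,r,I)$. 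This gives $v\in S$ and hence $\tau_S\leq t<\tau_{V\setminus \Veuc(u,2r)}$.

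\textbf{Main obstacle.} The subtle step is the final angular verification. The edge $e$ containing $Z_t$ may in the worst case subtend an angle at $u$ of order $2\arctan(1/c_s)$ where $c_s$ is the sausage constant, and this may be close to $\pi$. Controlling this and making a careful choice of the constants $c,c_2,C_\ast$ (together with a slight narrowing of $T$ or an alternative trajectory-based argument) is the technical heart of the proof; everything else is a straightforward application of the heat kernel and volume estimates already established.
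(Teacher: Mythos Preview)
Your overall strategy---heat-kernel lower bound from \cref{T:heat} applied at scale $\asymp r$, integrated against the cone-annulus volume from \cref{coneannulusvolume}, then passing from $Z_t$ to a nearby vertex---is exactly the paper's. The gap is precisely where you locate it, and your proposed fix does not close it.

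The sausage bound $d(u,e)\geq c_s|e|$ only yields $|e|\leq d(u,e)/c_s$, so the angle $e$ subtends at $u$ can be as large as $2\arctan(1/c_s)$, which for small $c_s$ is arbitrarily close to $\pi$. Thus no fixed narrowing of $I$ forces both endpoints of $e$ into the original cone. Worse, narrowing $I$ below width $\pi-\eta$ destroys the volume estimate: the proof of \cref{L:conevolume} (and hence \cref{coneannulusvolume}) relies on the existence of an edge-path from $u$ inside the cone, which is guaranteed only for wide cones. Finally, even granting that \emph{one} endpoint of $e$ lies in the cone, neither ``the last vertex before $t$'' nor ``the first vertex after $t$'' need be that endpoint with probability bounded below, since $Z_t$ may sit arbitrarily close to the out-of-cone endpoint.

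The paper resolves this with two ideas you are missing. First, since $|I|=\pi-\eta$, one can build a simple edge-path from $u$ that stays in the Euclidean cone $C$; this path separates $C$, so \emph{every edge meeting $C$ has at least one endpoint in $C$}. Second, rather than integrating the heat kernel over the full cone-annulus, one integrates over the set $B$ consisting of each such edge if it lies entirely in $C$, and otherwise only its half adjacent to the in-cone endpoint. One still has $m(B)\gtrsim r^2$ (half the cone-annulus mass), and from any point of $B$ the cable process reaches a vertex of $C\cap V$ with probability at least $1/2$. This half-edge construction replaces your subtended-angle argument and is the genuine technical content of the step.
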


\begin{pf}
Write $C$ for the unbounded Euclidean cone $\{x \in\R^2 : \arg(x-u)
\in
I\}$ and construct an infinite simple path $P$ from $u$ that remains in
$C$, as we did in the previous lemma. The existence of $P$ implies that
any edge $e$ that intersects $C$ must have at least one endpoint in $C$.

Write $c<1$ for the smaller of the constants in Corollaries \ref
{L:chorizo} and
\ref{coneannulusvolume}. Let $B \subset X \cap C$ be constructed as
follows: consider an edge $e$ that intersects $C \cap\Beuc(u,c r)
\setminus\Beuc(u,c^2 r)$ and does not contain $u$; if $e$ is entirely
contained in $C$, then we add $e$ to $B$, otherwise $e=(v_1, v_2)$ where
only $v_1$ is in $C$ and we add to $B$ the straight line segment between
$v_1$ and $(v_1+v_2)/2$ (i.e., half the edge $e$, starting at
$v_1$). We have that $m(B) \geq\frac{1}2 m  (X \cap C \cap\Beuc(u,c
r) \setminus\Beuc(u,c^2 r)  )$ since for any edge $e$ that
intersects $C \cap\Beuc(u, c r)$ we added to $B$ at least half of $e
\cap C$. Hence, by Corollary~\ref{coneannulusvolume} we get that $m(B)
\geq c^3
r^2/2$.

We now appeal to Theorem~\ref{T:heat} with $x_0=u$ and $t=r^2$ and integrate
over $y \in B$ to get that
\[
\prob_u ( Z_t \in B \mbox{ and } t < \tau_{\partial\Beuc(u,2r)} )
\geq c' > 0,
\]
for some constant $c'=c'(D,\eta)>0$.
By Corollary~\ref{L:chorizo}, we have that $B\cap V \subset X \cap C \cap
\Veuc(u,r)\setminus\Veuc(u,c^3 r)$ and since we added either full edges
or half edges, it is clear that starting from any point in $B$, the
probability that the first vertex that we visit is in $B \cap V$ is at
least $1/2$. This completes our proof.
\end{pf}

\begin{lemma}\label{exitconewithobstacles}
For any $\eps>0$, there exists $c=c(\eps, D, \eta)>0$ such that for any
vertex $u\in V$ and any $r\geq r_u$ satisfying
$\Beuc(u,r)\subset\carr(G)$, and any interval $I$ with $|I|=\pi
-\eta$ we
have
\[
\prob_u ( \tau_{S} < \tau_O ) > c,
\]
where
\[
S = V \cap\cone(u,\infty,I) \setminus\Veuc(u,r)
\]
and
\[
O = \bigl\{ v \in V : d\bigl(v, \cone(u,\infty,I)\bigr) \geq\eps r \bigr\}.
\]
\end{lemma}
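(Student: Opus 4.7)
The plan is to iterate \cref{forwardincone} at geometrically growing scales to propagate the walk along the axis of the cone from $u$ into $S$, while staying uniformly $\eps r$-close to the cone. Let $\theta$ be the midpoint of $I$ and $v_\theta$ the unit vector in direction $\theta$; set $a(v) = (v-u)\cdot v_\theta$ and $s(v)=|v-u - a(v)v_\theta|$ for the axial and radial coordinates of $v\in\R^2$. Writing $\beta = (\pi-\eta)/2$, a direct computation shows
\[
V \setminus O \supset \Beuc(u,\eps r)\cup\bigl\{v\in V : a(v)\geq 0 \text{ and } s(v)\leq a(v)\tan\beta + \eps r/\cos\beta\bigr\},
\]
so the allowed region is a horn-shaped neighborhood of $\cone(u,\infty,I)$ whose perpendicular width at axial distance $a\geq 0$ is at least of order $\eps r + a$.

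I would inductively construct a random sequence $u = u_0,u_1,\ldots,u_N$ as follows. At stage $i$, set the radius $r_i = \lambda\bigl(a(u_i) + \eps r\bigr)$ for a small constant $\lambda = \lambda(D,\eta) > 0$ to be chosen, and apply \cref{forwardincone} from $u_i$ with the interval $I$. With probability at least $c_0 = c_0(D,\eta)$ this yields a vertex $u_{i+1}\in\cone(u_i,r_i,I)$ with $|u_{i+1}-u_i|\in[cr_i,r_i]$, and moreover the walk between $u_i$ and $u_{i+1}$ stays in $\Veuc(u_i,2r_i)$. The axial progress is at least $a(u_{i+1})-a(u_i)\geq c_1 r_i = c_1\lambda(a(u_i)+\eps r)$, so with $b_i := a(u_i)+\eps r$ we get $b_{i+1}\geq (1+c_1\lambda)b_i$. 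Since $b_0=\eps r$, after $N = O(\log(1/\eps))$ iterations we obtain $b_N\geq 2r$ and hence $u_N\in S$. The total probability of success is at least $c_0^N$, a positive constant depending only on $\eps, D,\eta$, as desired.

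The main obstacle is choosing $\lambda$ so that two properties hold simultaneously at every stage $i$: (a) $\Veuc(u_i,2r_i)\subset V\setminus O$, so the walk never visits a vertex of $O$ during stage $i$; and (b) an inductive invariant of the form $s(u_i)\leq K b_i$, with a constant $K=K(D,\eta)<\tan\beta$, is preserved. As stated, \cref{forwardincone} permits radial drift up to $r_i\cos(\eta/2)$ per step against axial progress of only $c r_i\sin(\eta/2)$, so the worst-case ratio $\cot(\eta/2)/c$ may exceed $\tan\beta=\cot(\eta/2)$ and destroy (b). I would therefore first establish a ``narrow'' variant of \cref{forwardincone} in which the walk is guaranteed to hit $\cone(u_i,r_i,I')$ for a strictly narrower sub-cone $I'\subset I$ of width $O(\eta)$; this is obtained by combining \cref{T:heat} with a version of \cref{L:conevolume} for narrower cones (both adaptations give only a smaller but still positive constant depending on $D,\eta$). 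With this refinement the ratio of radial drift to axial progress becomes $O(\tan(\eta/2))\ll\tan\beta$, so picking $\lambda$ small enough the invariant (b) is maintained, while the horn-shaped geometry of $V\setminus O$ (in particular the shift by $\eps r$ in the definition of $r_i$) yields (a) from the start at $u_0=u$.
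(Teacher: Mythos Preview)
Your plan is much more elaborate than what is needed, and the elaboration creates a real gap. The paper's proof is one sentence: iterate \cref{forwardincone} with the \emph{constant} radius $r'=\eps r/2$, keeping the same interval $I$ throughout, for roughly $2(c\eps)^{-1}$ steps. Two elementary observations make this work immediately. First, since $|I|<\pi$ the Euclidean cone with opening $I$ is convex, so whenever $u_i\in\cone(u,\infty,I)$ and $u_{i+1}\in\cone(u_i,r',I)$ we automatically get $u_{i+1}\in\cone(u,\infty,I)$; hence every $u_i$ stays in the original cone with no invariant to maintain. Second, during stage $i$ the walk is confined to $\Beuc(u_i,2r')=\Beuc(u_i,\eps r)$, and since $u_i$ lies in the cone while every point of $O$ is at distance $\ge\eps r$ from the cone, this ball is disjoint from $O$. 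Each stage contributes axial progress at least $cr'\cos\beta$, so $O(1/\eps)$ stages suffice to land in $S$, with success probability $\ge c^{O(1/\eps)}$.

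Your choice of geometrically growing radii $r_i=\lambda(a(u_i)+\eps r)$ is what forces the difficulty: once $2r_i>\eps r$ the ball $\Beuc(u_i,2r_i)$ can meet $O$ unless $u_i$ sits well inside the cone, and this is the sole reason you need a ``narrow'' version of \cref{forwardincone}. That narrow-cone lemma is not available in the paper and is not as routine as you suggest. The proof of \cref{L:conevolume} hinges on the existence of an edge-path from the apex staying in the cone, which uses $|I|\ge\pi-\eta$ in an essential way; for an interval $I'$ of width $O(\eta)$ there need not be any such path, and indeed when the local edge length near $u_i$ is comparable to $r_i$ the set $V\cap\cone(u_i,r_i,I')\setminus\Beuc(u_i,cr_i)$ can be empty. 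So your reduction step is not justified. (Incidentally, your worst-case ratio $\cot(\eta/2)/c$ is over-pessimistic: for a single increment the radial-to-axial ratio is exactly $|\tan\phi|\le\tan\beta$, since the extremal radial drift and the extremal axial progress correspond to different values of $|u_{i+1}-u_i|$. This shows at once that every $u_i$ stays in the closed cone---but that by itself only yields $d(u_i,O)\ge\eps r$, not $\ge 2r_i$, so it does not rescue the growing-radius scheme.) The fix is simply to drop the geometric growth: with fixed $r'=\eps r/2$ the obstacle you identify disappears and no new lemma is required.
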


\begin{pf}
We iterate $2(c\eps)^{-1}$ times Lemma~\ref{forwardincone} with a
cone of
radius $r'=\eps r/2$ and opening $I$.
\end{pf}

\begin{lemma}\label{goodarea}
For any $\eps>0$, there exists $c=c(\eps, D, \eta)>0$ such that for any
vertex $u\in V$, any $r\geq r_u$ satisfying $\Beuc(u,r)\subset\carr(G)$,
any interval $I$ with $|I|=\pi-\eta$, and any vertex $v$ such that
$\eps
r \leq|u-v| \leq(1-\eps)r$, and $\arg(v-u) \in I$ we have the
following. Let
\[
S = V \cap\cone(u,\infty,I) \setminus\Veuc(u,r)
\]
and
\[
Q = V \setminus\bigl(\cone(u,\infty,I) \cup\Veuc(u,r)\bigr),
\]
then $\prob_v ( \tau_S < \tau_Q) \geq c$.
\end{lemma}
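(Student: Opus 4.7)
The plan is to bound from below the function $h(x) := \P_x(\tau_S < \tau_Q)$, which is harmonic on $V\setminus(S\cup Q)=\Veuc(u,r)$ with boundary data $1$ on $S$ and $0$ on $Q$, and to show $h(v)\geq c(\eps,D,\eta)>0$. After translating $u$ to the origin and rotating so the central direction of $I$ is the positive $x$-axis, write $I=[-\alpha,\alpha]$ with $\alpha=(\pi-\eta)/2$ and denote $\theta' = \arg v \in [-\alpha,\alpha]$.

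The heart of the argument is a direct application of \cref{exitconewithobstacles} at $v$ using the sub-cone $\tilde I := I$ based at $v$ and pointed along the axis of the big cone, radius $\tilde r := r+|v|\leq 2r$, and a suitable obstacle parameter $\tilde\eps$. Since the boundary rays of $\cone(v,\infty,\tilde I)$ are parallel to and translated by $v$ from those of $\cone(u,\infty,I)$, and $v$ lies in the big cone, one checks $\cone(v,\infty,\tilde I)\subset\cone(u,\infty,I)$; the triangle inequality then yields $|x-u|\geq|x-v|-|v|\geq r$ for every $x\in\tilde S:=V\cap\cone(v,\infty,\tilde I)\setminus\Veuc(v,\tilde r)$, so $\tilde S\subset S$. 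A short computation shows the perpendicular distance between corresponding parallel boundaries of the two cones equals $|v|\sin(\alpha\pm\theta')$, and when $|\theta'|\leq \alpha/2$ both are at least $|v|\sin(\alpha/2)\geq \eps r\sin(\alpha/2)$, so for $\tilde\eps$ a small enough function of $\eps$ and $\eta$ the tube of width $\tilde\eps\tilde r$ around $\cone(v,\infty,\tilde I)$ is contained in $\cone(u,\infty,I)$. This forces $Q\subset\tilde O:=\{w:d(w,\cone(v,\infty,\tilde I))\geq \tilde\eps\tilde r\}$, and \cref{exitconewithobstacles} delivers $h(v)\geq \P_v(\tau_{\tilde S}<\tau_{\tilde O})\geq c(\eps,D,\eta)$.

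For the remaining case $|\theta'|>\alpha/2$ the buffer $|v|\sin(\alpha-|\theta'|)$ may vanish, and the containment $Q\subset\tilde O$ breaks. I resolve this by invoking the elliptic Harnack inequality (\cref{cableehi}) for the cable process and chaining it along the circular arc of radius $|v|$ centred at $u$ from $v$ to a target $v^\ast$ lying deep in the cone (say with $|\arg(v^\ast-u)|\leq \alpha/2$ and $|v^\ast-u|=|v|$). The arc sits in $\Beuc(u,|v|)\subset\Beuc(u,(1-\eps)r)$, leaving room $\eps r$ for Harnack balls inside $\Veuc(u,r)$, and a chain of $O(1/\eps)$ such balls with constants depending only on $D,\eta$ transfers the axis-case bound $h(v^\ast)\geq c(\eps,D,\eta)$ from the previous step to $h(v)\geq c'(\eps,D,\eta)$.

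The principal obstacle is the boundary regime $|\theta'|$ close to $\alpha$, where the translated sub-cone provides no angular buffer and the single shot application of \cref{exitconewithobstacles} fails, forcing the Harnack detour. A secondary technicality is that $\Beuc(v,\tilde r)\subset\carr(G)$ is not immediate from $\Beuc(u,r)\subset\carr(G)$ when $\tilde r\asymp r$; this can be absorbed either by rerunning the \cref{forwardincone} iteration used inside \cref{exitconewithobstacles} directly, with step size chosen to respect the carrier, or by a further Harnack step replacing $v$ by a vertex closer to $u$.
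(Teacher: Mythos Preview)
Your first case (when $v$ sits at angular distance $\leq\alpha/2$ from the axis, so the parallel sub-cone $\cone(v,\infty,I)$ has a definite Euclidean buffer inside $\cone(u,\infty,I)$) is exactly the paper's first case, stated there as ``distance of $v$ from $\R^2\setminus C$ is at least $\eps r$''.

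For the boundary case the paper does \emph{not} use a Harnack chain. Instead it keeps working with \cref{exitconewithobstacles} but applies it to a \emph{rotated} cone $C'=\cone(v,\infty,I+\alpha)$: the rotation angle $\alpha$ is tuned so that $C'$ leaves the big cone $C$ only at Euclidean distance $\gtrsim r$ from $u$, and the obstacle parameter $\eps'$ is tuned so the $\eps'$-tube around $C'$, restricted to $\Veuc(u,r)^c$, still lies in $C$. One application of \cref{exitconewithobstacles} (plus the edge-length control of \cref{L:chorizo}) then puts the walk directly in $S$ without touching $Q$. This stays entirely within the tools of Section~4 and never invokes \cref{cableehi}.

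Your Harnack route is conceptually reasonable but, as written, has a real gap. The elliptic Harnack inequality \cref{cableehi} is stated for balls $B_{d_0}(x,Ar)$ with $x\in X$, and the chain must live in $X$ with the enlarged balls contained in the region where $h$ is cable-harmonic, namely $X\setminus(V\setminus\Veuc(u,r))$. The circular arc you propose lies in $\R^2$, not in $X$; to run the chain you must approximate it by an $X$-curve through face boundaries. If every face the arc meets has diameter $\ll\eps r$ this is fine, but nothing rules out a face of diameter comparable to $r$ intersecting the annulus $\{\eps r\le|x-u|\le(1-\eps)r\}$, in which case the $X$-approximation is forced near vertices outside $\Veuc(u,r)$ and the chain breaks. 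A related issue is the target $v^\ast$: your case~1 requires a \emph{vertex} (that is where \cref{exitconewithobstacles} is stated), and you have not argued that a vertex with $|\arg(v^\ast-u)|\le\alpha/2$ and $|v^\ast-u|\in[\eps r,(1-\eps)r]$ actually exists. Both points are repairable with extra work (e.g.\ a direct heat-kernel coupling via \cref{T:heat}, or handling the finitely many large faces separately), but the paper's rotated-cone trick sidesteps them entirely.
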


\begin{pf}
Write $C$ for the Euclidean cone $\{x \in\R^2 : \arg(x-u) \in I\}$. If
the distance of $v$ from $\R^2 \setminus C$ is at least $\eps r$, then we
apply Lemma~\ref{exitconewithobstacles} on the cone parallel to $C$
emanating from $v$ and the assertion follows. Assume now the opposite,
and write $R_1, R_2$ for the two rays of the cone $C$ so that $R_1$ is
before $R_2$ clockwise and assume without loss of generality that $v$ is
closer to $R_1$. Let $C'$ be the cone
\[
C' = \bigl\{x \in\R^2 : \arg(x-v) \in I-\alpha\bigr\},
\]
where $\alpha= \alpha(\eps)>0$ is the largest number so that $d(u, C'
\setminus C) \geq(2c^{-1}+2) r$ where $c>0$ is the constant from
Corollary~\ref{L:chorizo} (see Figure~\ref{fig.goodarea}). Define the set
$O'$ by
\[
O' = \bigl\{ v \in V : d\bigl(v, V \cap C' \cap
\Veuc(v, 2r)\bigr) \geq\eps' r \bigr\},
\]
where $\eps'=\eps'(\eps,\alpha) >0$ is chosen so that $(V
\setminus O') \setminus\Veuc(u,r) \subset C$.

\begin{figure}[b]

\includegraphics{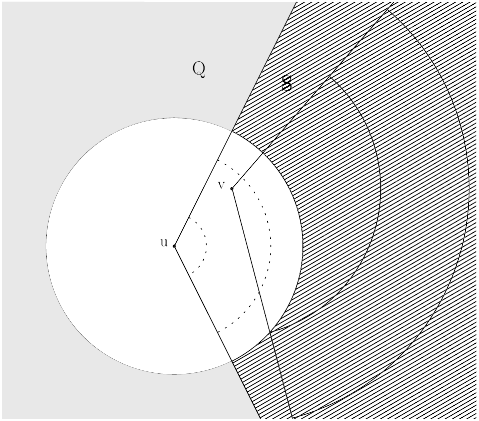}

\caption{Illustration of Lemma \protect\ref{goodarea}. The
probability from $v$ of
hitting $S$ before $Q$ cannot be too small. Also shown: the possible
locations for $v$, the rotated cone $C'$ and the set likely to be hit
from $v$ by Lemma \protect\ref{exitconewithobstacles}.}
\label{fig.goodarea}
\end{figure}

We now apply Lemma~\ref{exitconewithobstacles} with $\eps', v$ and
$C'$ to
obtain that with probability uniformly bounded below we visit $(V \cap
C') \setminus\Veuc(v,2r)$ before visiting $O'$. By Corollary~\ref{L:chorizo},
when this event occurs the length of the last edge traversed has length
at most $2c^{-1} r$. Hence, by our choice of $\alpha$ in this last
step we
find ourselves in $S$ and by our choice of $\eps'$ we have not stepped
outside of $C \cup\Veuc(u,r)$, as required.
\end{pf}

\begin{pf*}{Proof of Theorem~\ref{T:exit_arg}}
Let $\eps=\eps(D,\eta)>0$ be a fixed small number to be chosen
later. Let
us consider several cases. Recall that by condition (a) there always
exist an edge $(u,v)$ such that $\arg(v-u) \in I$. First, if there
exists such an edge $(u,v)$ with $|u-v| > r$, then with probability at
least $D^{-3}$ we take this edge in the first step and we are done.
Second, if there is such an edge so that $(1-\eps) r \leq|u-v| \leq
r$, then as long as $\eps$ is small with respect to $D$, then by
condition (b) $v$ has a neighbor $w$ such that $\arg(w-u) \in I$ and
$|w-u|>r$, so with probability at least $D^{-6}$ we take two steps from
$u$ to $w$ and we are done. Third, if there exists such an edge so
that $\eps r \leq|u-v| \leq(1-\eps)r$ then $\P_u(X_{T_r} \in S)
\geq
D^{-3} \P_v(X_{T_r}\in S)$ where $S$ is defined in Lemma~\ref{goodarea},
and by that lemma the last quantity is uniformly bounded below and we are
done. Lastly, if all neighbors $v$ of $u$ satisfy $|u-v|\leq\eps r$,
then we apply Lemma~\ref{exitconewithobstacles} with radius $\eps r$ and
obtain that with probability uniformly bounded from below we visit $V
\cap\cone(u,\eps r, I) \setminus\Veuc(u,\eps r)$ before visiting $O =
\{v \in V : d(v, \cone(u,\eps r, I ))\geq\eps^2 r\}$. When this occurs,
the last edge taken by the random walk has length at most $c^{-1} \eps r$
by Corollary~\ref{L:chorizo}, where $c>0$ is the constant of that lemma.
Hence, if
$\eps$ is chosen so that $\eps\leq(c^{-1}+2)^{-1}$ we get that at that
hitting time we are at a vertex $v$ such that $\eps r \leq|u-v| \leq
(1-\eps)r$ and $\arg(v-u)\in I$. The assertion of the theorem now follows
by another application of Lemma~\ref{goodarea}, completing the proof.
\end{pf*}

\begin{pf*}{Proof of Theorem~\ref{T:exit_time}}
Let $T_r^Z$ denote the exit time from $\Beuc(u,r)$ of the cable process
and $T_r$ the exit time from $\Veuc(u,r)$ for the simple random walk. Our
first goal is to prove that $\E_u T_r^Z \asymp r^2$. We begin with the
lower bound. To that aim, let $C_{\scriptsize{\ref{T:heat}}}$ be the
constant from Theorem~\ref{T:heat} and let
$A=\Beuc(u,C_{\scriptsize{\ref{T:heat}}} r)$ and assume that $A
\subset
\carr(G)$. We apply Theorem~\ref{T:heat} with $t=r^2$ and integrate
over $y\in
\Beuc(u, r)$ to get that $\prob(T_{C_{\scriptsize{\ref
{T:heat}}}r}^Z \geq
r^2) \geq c$, hence $\E_u T_{C_{\scriptsize{\ref{T:heat}}}r}^Z \geq cr^2$
and so $\E_u T_r^Z \geq c'r^2$ for some constant $c'>0$.

To show the upper bound, Lemma~\ref{doubling} immediately implies that there
exists some constant $C_{\scriptsize{\ref{doubling}}}>0$ such that for
any $r \geq r_u$ and any $x \in X \cap\Beuc(u,r)$ we have
\[
m \bigl(X \cap\Beuc(x,C_{\scriptsize{\ref{doubling}}}r) \setminus \Beuc(u,r) \bigr) \geq
r^2.
\]
We prove the theorem with $C = C_{\scriptsize{\ref{doubling}}}
C_{\scriptsize{\ref{T:heat}}} +1$. We apply Theorem~\ref{T:heat}
with $A =
\Beuc(x,C_{\scriptsize{\ref{doubling}}} C_{\scriptsize{\ref
{T:heat}}} r)$
[so that $A \subset\carr(G)$] and $t=r^2$ and integrate over $y \in X
\cap\Beuc(x,C_{\scriptsize{\ref{doubling}}}r) \setminus\Beuc
(u,r)$ to
get that for any $x \in X \cap\Beuc(u,r)$ we have $ \prob_x ( T_r
\geq
r^2 ) \leq1-c$, for some constant $c>0$. Hence, $\E_u T_r^Z \leq C' r^2$
for some $C'>0$.

We got that $\E_u(T_r^Z) \asymp r^2$. Recall that the
trace of the cable process along the vertices is distributed as the
discrete weighted random walk. Hence, by writing $T_r^Z$ as a the sum of
possible random walks paths and the time it takes the cable process to
traverse between vertices we obtain using \eqref{timetomove} that
\[
\E_u\bigl(T_r^Z\bigr) \asymp\E\sum
_{t=0}^{T_r} r^2_{X_t},
\]
where $\{X_t\}$ is the discrete weighted random walk.
\end{pf*}

\section{The Martin boundary} \label{sec-martin}

It will be convenient to approximate the graph $G$, embedded in the plane
with carrier $U$ by finite subgraphs $G_\eps$. For $\eps>0$, consider the
subgraph $G_\eps$ induced by the vertices $V_\eps$ where
\[
V_\eps= \bigl\{v \in V : |v| \leq1-\eps \bigr\}.
\]
For two vertices $a,z$ in a finite weighted graph we write $\Reff
(a,z)$ for
the effective electrical resistance between $a$ and $z$ (for a definition
and introduction to electrical resistance, see \cite{LP}). For disjoint
sets $A,Z$ of vertices, we write $\Reff(A, Z)$ for the electrical resistance
between $A$ and $Z$ in the graph obtained by contracting $A$ and $Z$ to two
vertices.

\begin{lemma}\label{L:resannulus}
There exists $c = c(D,\eta)>0$ such
that for any $r>0$ and $\eps\leq r/10$ and any $\xi\in\partial U$ we
have the resistance bound
\[
\Reff^{(\eps)} \bigl( \Veuc(\xi,r), \Veuc(\xi,2r)^c \bigr)
\geq c,
\]
where $\Reff^{(\eps)}$ denote the resistance is in the graph $G_\eps$.
\end{lemma}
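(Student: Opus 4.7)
The plan is to exhibit a voltage function on $V_\eps$ with bounded Dirichlet energy, using the variational characterization
\[
\frac{1}{\Reff^{(\eps)}(\Veuc(\xi,r),\Veuc(\xi,2r)^c)} = \inf_{V} \sum_{e = (u,v) \in E(G_\eps)} w_e \bigl(V(u) - V(v)\bigr)^2
\]
where the infimum is over $V: V_\eps \to \R$ with $V|_{\Veuc(\xi,r)} = 1$ and $V|_{\Veuc(\xi,2r)^c} = 0$. I take $V(v) = \phi(|v - \xi|)$ with $\phi: [0, \infty) \to [0, 1]$ the piecewise-linear tent equal to $1$ on $[0, r]$, linearly decreasing to $0$ on $[r, 2r]$, and $0$ thereafter. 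This satisfies the boundary conditions, and since $\phi$ is $(1/r)$-Lipschitz, any edge contributing to the sum must intersect the annulus $A_r = \{r < |z - \xi| < 2r\}$, with $(V(u) - V(v))^2 \leq \min(1, |u-v|^2/r^2)$.

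I then split the contributing edges by length. For \emph{short} edges with $|e| \leq r$, intersecting $A_r$ forces $e \subset \Beuc(\xi, 3r)$, so any adjacent face lies in $\Beuc(\xi, Cr) \cap U$ by \cref{L:face_diam}. Using $|e|^2 \leq C \leb(f)$ from that lemma and the fact that faces have pairwise disjoint interiors and a bounded number of edges, one obtains
\[
\sum_{e \text{ short, contributing}} |e|^2 \;\leq\; C \,\leb(\Beuc(\xi, Cr) \cap U) \;\leq\; C' r^2,
\]
yielding a Dirichlet contribution of at most $(D/r^2) \cdot C' r^2 = O(1)$.

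For \emph{long} edges with $|e| > r$, each contributes at most $D \cdot 1 = D$ to the sum. Any such contributing edge $e$ has an endpoint $v \in \Beuc(\xi, 2r)$ (either in the annulus itself, or in $\Veuc(\xi, r)$ if $e$ spans $A_r$), and then property (b) of the good embedding forces the isolation radius $r_v \asymp |e| > r$. Any two such endpoints lie at pairwise distance $\geq r$, so at most $O(1)$ of them fit in the diameter-$4r$ ball $\Beuc(\xi, 2r)$; \cref{L:noacute} gives uniformly bounded vertex degree, so only $O(1)$ long contributing edges exist in total, contributing $O(1)$ in aggregate.

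Combining gives total Dirichlet energy at most a constant $C = C(D, \eta)$, whence $\Reff^{(\eps)} \geq 1/C$. (The condition $\eps \leq r/10$ ensures $\Veuc(\xi,r) \cap V_\eps$ is non-empty, making the resistance a genuine quantity.) The main subtlety is the long-edge bound: a naive estimate of $\sum |e|^2$ over contributing edges is not summable, since individual edges can have length of order $\diam(U)$. The geometric observation that rescues the argument is that vertices carrying long incident edges must be well-separated (by property (b) combined with the definition of $r_v$), so only a bounded number of them can cluster near the boundary point $\xi$.
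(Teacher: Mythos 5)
Your proof uses the same core strategy as the paper: plug the piecewise-linear radial cutoff into the variational characterization of effective resistance and bound the resulting Dirichlet energy. The paper's version of this argument is terser — it simply asserts that all faces adjacent to contributing edges lie in $\Veuc(\xi,Cr)$, which presumes all contributing edges have length $O(r)$ without explicitly justifying it (one can deduce this from \cref{L:sausage} together with the fact that $\carr(G)=U$ forces vertices arbitrarily close to $\xi$, modulo the $O(1)$ edges incident to such a nearby vertex, which must be handled separately). Your explicit split into short and long edges, with the packing argument for long edges based on the separation of their endpoints, is a cleaner and more self-contained way to close this gap, and you correctly identify it as the real subtlety.

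One small imprecision: you attribute $r_v \asymp |e|$ to property~(b) alone. Property~(b) only controls the lengths of edges \emph{incident} to $v$, hence gives $\min_{u\sim v}|u-v|\geq |e|/D$; but $r_v$ is the distance to the nearest vertex overall, which could a priori be a non-neighbor. To get the lower bound $r_v\geq c|e|$ you need \cref{L:sausage}, which guarantees any non-adjacent vertex is at distance at least $c$ times the shortest edge at $v$. This does not affect the correctness of the packing step, but the citation should be to the Sausage Lemma rather than to property~(b).
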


\begin{pf} We use the discrete Dirichlet principle for effective
resistance; see Exercise 2.13 of \cite{LP}. Define a function
$f: V_\eps\to\R$ by
\[
f(x) = \cases{ 0, &\quad $\mbox{if } |x-\xi|\leq r$, \vspace*{2pt}
\cr
\displaystyle\frac{|x-\xi|-r}{r},
&\quad $\mbox{if } |x-\xi| \in[r,2r]$, \vspace*{2pt}
\cr
1, &\quad  $\mbox{if } |x-\xi| \geq2r
$,}
\]
and let us estimate the Dirichlet energy of the function. Note that $f$
is $r^{-1}$-Lipschitz, so that for any edge $(x,y)$ we have $|f(x) -
f(y)| \leq|x-y|/r$. All edges $(x,y)$ such that $x,y\in\Veuc(\xi,r)$
or $x,y \notin\Veuc(\xi, 2r)$ contribute $0$ to the energy. Any other
edge $(x,y)$ contributes at most $|x-y|^2/r^2$ to the energy. Since
$|x-y|^2$ is proportional to the area of the faces adjacent to the edge
$(x,y)$, all these faces are contained in $\Veuc(\xi, Cr)$ for some
$C=C(D,\eta)<\infty$, and each face has degree at most $C$, we get that
the energy is bounded by some constant and the result follows.
\end{pf}

\begin{corollary}\label{C:resannulus}
There exist constants $K=K(D,\eta) < \infty$ and $c = c(D, \eta)>0$ such
that for any $R,r$ satisfying $0 < Kr \leq R$, any $\eps\leq r/10$ and
any $\xi\in\partial U$ we have the resistance bound
\[
\Reff^{(\eps)} \bigl( \Veuc(\xi,r), V_\eps\setminus\Veuc(\xi,R)
\bigr) \geq c\log\frac{R}{r},
\]
where $\Reff^{(\eps)}$ denotes the resistance is in the graph $G_\eps$.
\end{corollary}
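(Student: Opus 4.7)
The plan is to iterate \cref{L:resannulus} along a dyadic sequence of concentric annuli centered at $\xi$ and sum the per-annulus constant lower bounds via a series inequality for effective resistance.

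Concretely, I would set $r_i = 2^i r$ for $0 \le i \le k$ with $k = \lfloor \log_2(R/r) \rfloor$, so $r_0 = r$ and $r_k \le R$. Rayleigh monotonicity together with the inclusion $V_\eps \setminus \Veuc(\xi, R) \subseteq V_\eps \setminus \Veuc(\xi, r_k)$ gives
\[
\Reff^{(\eps)}\bigl(\Veuc(\xi, r), V_\eps \setminus \Veuc(\xi, R)\bigr) \ge \Reff^{(\eps)}\bigl(\Veuc(\xi, r_0), V_\eps \setminus \Veuc(\xi, r_k)\bigr),
\]
while \cref{L:resannulus} applied at each radius $r_{i-1}$ (the hypothesis $\eps \le r_{i-1}/10$ is immediate from $\eps \le r/10$) yields $\Reff^{(\eps)}(\Veuc(\xi, r_{i-1}), V_\eps \setminus \Veuc(\xi, r_i)) \ge c$ for every $i$.

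The central step is then the series inequality
\[
\Reff^{(\eps)}\bigl(\Veuc(\xi, r_0), V_\eps \setminus \Veuc(\xi, r_k)\bigr) \ge \sum_{i=1}^{k} \Reff^{(\eps)}\bigl(\Veuc(\xi, r_{i-1}), V_\eps \setminus \Veuc(\xi, r_i)\bigr),
\]
which summed gives $\ge ck \ge c' \log(R/r)$ provided $R \ge Kr$ for $K$ chosen large enough to absorb the rounding loss between $k$ and $\log_2(R/r)$. I would prove the inequality by iterating the standard single-separator shorting fact: if $W$ is a vertex cutset between $A$ and $Z$, then shorting $W$ to a single vertex only decreases effective resistance (Rayleigh), and in the shorted graph Kirchhoff's series law gives $\Reff(A, Z) \ge \Reff(A, W) + \Reff(W, Z)$, each side treating $W$ as one terminal. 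Applied iteratively with pairwise disjoint intermediate separators $W_1, \ldots, W_{k-1}$ arranged so that $W_i$ sits strictly between $\Veuc(\xi, r_{i-1})$ and $V_\eps \setminus \Veuc(\xi, r_i)$ (using mildly enlarged spacing, e.g.\ triadic rather than dyadic), followed by Rayleigh monotonicity to promote each summand $\Reff^{(\eps)}(W_{i-1}, W_i)$ to the corresponding per-annulus resistance, this produces the claimed lower bound.

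The main obstacle is securing pairwise disjointness of the separators: a long edge in the good embedding can straddle several dyadic annuli, forcing its inner endpoint into more than one $W_i$ under the naive inner-vertex-boundary choice. I would resolve this either by a pigeonhole within each dyadic scale (a continuum of admissible cutset radii is available in $(r_{i-1}, r_i)$ and only boundedly many need be avoided per scale), or, more cleanly, by running the series argument on the cable graph $X$ from \cref{sec-cableprocess}, where the loci $\{x \in X : |x-\xi| = \rho\}$ at distinct radii $\rho$ are honestly disjoint finite point sets; the series law then applies directly to cable resistances, and the bound transfers back to $G_\eps$ via the standard comparability between cable and discrete effective resistances.
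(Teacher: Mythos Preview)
Your strategy is the same as the paper's: iterate \cref{L:resannulus} across concentric annuli and sum via the series law. The execution differs in how the long-edge obstacle is handled. You take dyadic annuli, then worry about edges straddling several of them, and propose either a pigeonhole selection of cutset radii or a detour through the cable process. The paper instead absorbs the obstacle into the choice of $K$: by \cref{L:sausage} and the fact that $\xi\in\partial U$ is an accumulation point of $V$, every edge meeting $\Beuc(\xi,\rho)$ has length at most a constant times $\rho$, so one can pick $K=K(D,\eta)$ so that no edge of $G_\eps$ joins $A_i$ to $A_j$ with $|i-j|>1$, where $A_i=\{v:|v-\xi|\in[K^{i-1}r,K^ir]\}$. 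Contracting each even-indexed $A_{2i}$ then makes the series law apply directly, with each summand bounded below by \cref{L:resannulus}.

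Your fixes would probably work, but note that the pigeonhole version is not as immediate as you suggest (you need a reason why only boundedly many radii per scale are ``bad'', which again comes down to the edge-length bound from \cref{L:sausage}), and the cable-graph route requires a comparability statement between cable and discrete effective resistances that the paper never establishes. The paper's route is shorter: put the geometric input into $K$ at the outset and the combinatorics become trivial.
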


\begin{pf}
Let $K\geq2$ be such that there are no edges $(x,y)$ such that
$|x-\xi|\leq r$ and $|y-\xi|\geq Kr$. Such a choice is possible by
Lemma~\ref{L:sausage} and the fact that $\xi$ is an accumulation
point of
vertices.

Suppose first that $R=K^{2m-1}r$ for some integer $m\geq1$. Define sets
$A_0 = \{v : |v-\xi| < r\}$, and $A_i =  \{v : |v-\xi| \in[K^{i-1}
r, K^i r] \}$. By Lemma~\ref{L:sausage}, there are no edges connecting
$A_i$ to $A_j$ for $|i-j|>1$. By Lemma~\ref{L:resannulus}, we have that
$\Reff^{(\eps)}(A_i,A_{i+2}) \geq c$. Contracting all edges in $A_{2i}$
for each $i$ (recall that by Thompson's principle \cite{LP}, Chapter~2,
this operation can only decrease the effective resistance) and using the
series law for resistance we find
\[
\Reff^{(\eps)} \bigl( \Veuc(\xi,r), V_\eps\setminus\Veuc(\xi,R)
\bigr) \geq cm \geq c' \log\frac{R}{r}.
\]
For general $R>Kr$, the claim follows by monotonicity in $R$.
\end{pf}

\begin{proposition}[(Random walk convergence)]\label{P:rwconverge}
Let $X_n$ be the simple random walk on~$G$, then $X_n$ converges a.s. to
a limit $X_\infty\in\partial U$. Furthermore, the law of $X_\infty$
has no atoms.
\end{proposition}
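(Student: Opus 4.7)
The plan is to establish three facts in sequence: (i) $|X_n|\to 1$ almost surely; (ii) $X_n$ converges to a limit $X_\infty\in\partial U$; and (iii) the law of $X_\infty$ has no atoms.

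For (i), I would first show that vertices of $G$ accumulate only on $\partial U$. Every $p\in U$ lies in some face $f$ by the carrier assumption, and \cref{L:face_diam,L:sausage}, combined with the good embedding, give a Euclidean neighborhood of $p$ meeting only finitely many vertices; compactness extends this to every compact subset of $U$. Combined with the assumed transience of $G$, which forces $X_n$ out of every finite subset of $V$ almost surely, this yields $|X_n|\to 1$.

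For (ii), I would argue by contradiction. Suppose that with positive probability $X_n$ has two distinct accumulation points $\xi_1\ne\xi_2\in\partial U$. Fix $r<|\xi_1-\xi_2|/10$ and $R=|\xi_1-\xi_2|/2$. On the assumed event, $X_n$ makes infinitely many round trips between $\Veuc(\xi_1,r)$ and $V\setminus\Veuc(\xi_1,R)$. By \cref{C:resannulus}, for every sufficiently small $\eps$,
\[
\Reff^{(\eps)}\big(\Veuc(\xi_1,r),\,V_\eps\setminus\Veuc(\xi_1,R)\big)\geq c\log(R/r),
\]
a positive constant independent of $\eps$. The standard electrical identity expressing the expected number of excursions between two sets in a finite reversible network in terms of the effective conductance gives a uniform upper bound on the expected number of such excursions in $G_\eps$; passing $\eps\to 0$ via monotone convergence transfers this bound to $G$, contradicting the assumption of infinitely many round trips.

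For (iii), fix $\xi\in\partial U$ and a starting vertex $v$. Since $\{X_\infty=\xi\}\subseteq\{\tau_{\Veuc(\xi,r)}<\infty\}$ for every $r>0$, it suffices to show that this hitting probability tends to zero with $r$. Taking $R=|v-\xi|/2$ in \cref{C:resannulus} and using Rayleigh monotonicity (wiring together $V_\eps\setminus\Veuc(\xi,R)\ni v$ only decreases the resistance), one obtains $\Reff(v,\Veuc(\xi,r))\geq c\log(|v-\xi|/(2r))$. Combined with the standard inequality $\P_v(\tau_A<\infty)\leq \Reff(v,\infty)/\Reff(v,A)$ valid in any transient network and the finiteness of $\Reff(v,\infty)$ from transience, this gives $\P_v(\tau_{\Veuc(\xi,r)}<\infty)\leq C/\log(|v-\xi|/r)\to 0$.

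The main obstacle is step (ii): turning the uniform resistance lower bound into a finiteness statement for the number of crossings in the infinite graph. The finite-network identity between effective conductance and expected traffic is standard, but passing to the $\eps\to 0$ limit requires care since the two sets in question become infinite in $G$.
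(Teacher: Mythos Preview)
Your steps (i) and (iii) are sound; in particular (iii) is essentially the paper's own argument for non-atomicity, working in $G_\eps$ and combining the resistance lower bound of \cref{C:resannulus} with the transience bound $\Reff(X_0,\partial V_\eps)\leq C$. The difficulty you correctly isolate is step (ii), and here the gap is real and not easily closed along the lines you propose. There is no ``standard electrical identity'' that bounds the expected number of $A$--$Z$ excursions of the killed walk in $G_\eps$ purely in terms of $\Reff^{(\eps)}(A,Z)$: any such bound also involves quantities like the total conductance out of the wired set $A$, and this diverges as $\eps\to 0$ because $\Veuc(\xi_1,r)$ contains unboundedly many vertices accumulating on $\partial U$. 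More concretely, a single Nash--Williams cut $S$ in the annulus gives $\E[\text{crossings}]\leq C\,\Reff(X_0,\infty)\sum_{e\in S}c_e$, but $\sum_{e\in S}c_e$ is typically infinite here since edges become arbitrarily short near $\partial U$; averaging over radial cuts runs into the same obstruction, as $\sum_e |e|$ over the annulus need not be finite even though $\sum_e |e|^2$ is.

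The paper sidesteps all of this. It observes directly that the coordinate embedding $v\mapsto v$ has finite Dirichlet energy on $G$ --- indeed $\sum_{(u,v)\in E}|u-v|^2$ is bounded by a constant times the total face area, hence by the area of $U$, via \cref{L:face_diam} --- and then invokes the theorem of Ancona--Lyons--Peres \cite{ALP} that any finite-energy function on a transient network converges almost surely along the random walk. Applied to each coordinate this yields convergence of $X_n$ in one line, subsuming your step (i) as well. If you wish to salvage your route, the cleanest repair is to note that the very test function used to prove \cref{L:resannulus} has finite Dirichlet energy on all of $G$, and then apply \cite{ALP} to conclude that $f(X_n)$ converges and therefore cannot oscillate between $0$ and $1$ infinitely often; but at that point you have reproduced the paper's method.
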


Consequently, for any starting point $X_0$, we may define the harmonic
measure $\omega$ on $\partial U$ to be the law of $X_\infty$.

\begin{pf*}{Proof of Proposition \ref{P:rwconverge}}
By Lemma~\ref{L:face_diam}, for each edge $e=(u,v)$ we have that
$|v-u|^2$ is
bounded by a constant times the area of either faces adjacent to
$e$. Since each face has degree at most $2\pi/\eta$, this immediately
gives that the Dirichlet energy of the Euclidean location function,
that is,
$\sum_{e=(u,v)} |u-v|^2$, is bounded by some constant. By \cite{ALP}, Theorem~1.1, this implies that $X_n$ converges almost surely. (The theorem
is stated for real valued functions, so we apply it to each coordinate
separately.) It is trivial that the limit cannot be a vertex of $G$, so
must be in $\partial U$.

Let us now fix $X_0$, and show that for any $\xi\in\partial U$ we have
$\prob(X_\infty= \xi) = 0$. We have that $\Reff(X_0,\partial V_\eps)
\leq C$ for some $C$, since $G$ is transient. By Corollary~\ref{C:resannulus}
with $R=|X_0-\xi|$, for $r$ small enough and any $\eps<r/10$ we have
$\Reff^{(\eps)} (X_0,\Veuc(\xi,r) ) \geq c|\log r|$ and,
therefore,
\[
\P \bigl( \{X_n\} \mbox{ visits } \Veuc(\xi,r) \mbox{ before }
\partial V_\eps \bigr) \leq\frac{C}{|\log r|}.
\]
Since this estimate is uniform in $\eps$, we learn that the probability
that $\{X_n\}$ ever visits $\Veuc(\xi,r)$ is at most $C|\log
r|^{-1}$. This bound is uniform in $\eps$, and so holds also for the
random walk on $G$. Finally, $X_\infty= \xi$ implies that $\Veuc(\xi,r)$
is visited for all $r$, and so $\P(X_\infty= \xi) \leq\inf_r
C|\log
r|^{-1} = 0$.
\end{pf*}

We now state two variations of the Harnack principle that apply to well
embedded graphs.

\begin{theorem}[(Elliptic Harnack inequality)]\label{cableehi}
For any $A>1$, there exists $C=C(D,\eta,A)>0$ such that for any $x\in X$
and $r>0$ such that $d_0(x,\partial U) > Ar$, and any positive, harmonic
function $h$ on $B_{d_0}(x,Ar)$ we have
\[
\max_{y \in B_{d_0}(x,r)} h(y) \leq C \min_{y \in B_{d_0}(x,r)} h(y).
\]
\end{theorem}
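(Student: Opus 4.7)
The plan is to obtain the elliptic Harnack inequality as a time-independent special case of the parabolic Harnack inequality (PHI) for the cable process, which has effectively been established already in the proof of \cref{T:heat}. Volume doubling on $(X,d_0,m)$ is \cref{doubling}, and the strong Poincar\'e inequality \eqref{strongpi} follows from the weak version (\cref{T:weakpi}) by Jerison's chaining argument. Since $(X,m,\sE)$ is a strongly local regular Dirichlet space with intrinsic metric $d_0$, \cite[Theorem~3.5]{Sturm} gives the full PHI for the cable process with constants depending only on the doubling and Poincar\'e constants, hence only on $D$ and $\eta$.

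First I would identify the setting. A positive function $h$ that is harmonic on $V\cap B_{d_0}(x,Ar)$ in the discrete sense \eqref{harmonicdef} extends uniquely to a continuous function on $B_{d_0}(x,Ar)\subset X$, linearly along each edge, and this extension is $\sE$-harmonic. Indeed, on the interior of an edge the extension has vanishing second derivative, while at each vertex $v$ the cable-process mean-value property reduces to the weighted average \eqref{harmonicdef} by \eqref{rwtrace}. Thus it suffices to prove the EHI statement for positive continuous $\sE$-harmonic functions on $B_{d_0}(x,Ar)$.

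With this identification I would apply \cite[Theorem~3.5]{Sturm} to $h$ viewed as a positive $\sE$-harmonic function. Sturm's PHI, applied to the time-independent function $(t,y)\mapsto h(y)$ on a space-time cylinder $(0,(A_0 r)^2)\times B_{d_0}(x,A_0 r)$ for some fixed ratio $A_0>1$ built into the theorem, yields
\[
\sup_{B_{d_0}(x,r)} h \leq C_0 \inf_{B_{d_0}(x,r)} h,
\]
with $C_0=C_0(D,\eta)$, provided the outer ball is contained in the domain of harmonicity. For a general $A>1$ (possibly smaller than $A_0$) a standard chaining argument is needed: cover $B_{d_0}(x,r)$ by finitely many $d_0$-balls of radius $r/N$, with $N=N(A,A_0)$ chosen so that their $A_0$-enlargements all lie in $B_{d_0}(x,Ar)$, and connect any two such small balls through a chain of overlapping ones, each step losing a factor of $C_0$. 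The doubling property (\cref{doubling}) bounds the number of balls in the cover, giving the inequality with $C=C_0^{N}$ depending only on $D$, $\eta$, and $A$.

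No genuinely new obstacle arises: the only technical point is the chaining step when $A$ is close to $1$, which is entirely routine given the doubling of $(X,d_0,m)$ and the bi-Lipschitz equivalence between $d_0$ and the Euclidean metric (\cref{P:bilipschitz}). Everything else has been bundled into Sturm's theorem by the preparatory work of \cref{sec-cableprocess}.
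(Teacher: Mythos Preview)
Your proposal is correct and follows exactly the route the paper takes: the paper's proof is a one-line citation of \cite[Theorem~3.5]{Sturm}, with the hypotheses supplied by \cref{doubling} and \cref{T:weakpi}. Your write-up simply unpacks this citation (the passage from PHI to EHI via time-independent solutions, and the chaining to handle arbitrary $A>1$), which is faithful to what Sturm's theorem provides; the only superfluous step is the discrete-to-cable extension, since the theorem as stated is already about $\sE$-harmonic functions on $X$.
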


\begin{theorem}[(Boundary Harnack principle)]\label{bhp}
There exists positive constants $A_0, A_1$ and $R$, depending only on $D$
and $\eta$, such that for any $\xi\in\partial U$, any $r \in(0,R)$ and
any two functions $h_1, h_2:X \to\R$ that are positive, harmonic,
bounded on $B_{d_0}(\xi, A_0 r)$, and almost surely $h_i(X_n) \to0$ as
$n\to\infty$ for $i=1,2$, we have
\[
A_1^{-1} \leq\frac{h_1 (x) / h_2(x)}{h_1(y) / h_2(y)} \leq A_1
\qquad\forall x,y \in B_{d_0}(\xi, r) \cap X.
\]
\end{theorem}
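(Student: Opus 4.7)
The plan is to follow the strategy of Aikawa \cite{A} (adapted from Jerison--Kenig \cite{JK}) for the boundary Harnack inequality on uniform domains, applied to the random walk / cable process on $G$. The essential ingredients are the elliptic Harnack inequality (\cref{cableehi}), the inner uniformity of $(X,d_0)$ (\cref{L:inneruniform}), the heat kernel bounds (\cref{T:heat}), and the a.s.\ convergence of the walk to $\partial U$ (\cref{P:rwconverge}). Together with the hypothesis $h_i(X_n)\to 0$ a.s., the latter plays the role of the usual ``$h$ vanishes on $\partial U \cap B_{d_0}(\xi, A_0 r)$'' assumption in the Euclidean Aikawa setup, and via bounded-convergence and optional-stopping it lets us write $h_i(x) = \E_x[h_i(X_\tau)]$ for exit times $\tau$ of subdomains of $B_{d_0}(\xi, A_0 r) \cap X$ containing $x$.

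First I would construct corkscrew reference points: for every $\xi \in \partial U$ and scale $\rho \in (0,R)$, \cref{L:inneruniform} produces $x_\rho = x_\rho(\xi) \in X$ with $|x_\rho-\xi| \asymp \rho$ and $d_0(x_\rho, \partial U) \geq c_0\rho$. Any two consecutive reference points $x_\rho, x_{\rho/2}$ can be joined by a chain of balls of $d_0$-radius $\asymp \rho$ staying at comparable Euclidean distance from $\partial U$, so \cref{cableehi} gives $h(x_\rho) \asymp h(x_{\rho/2})$. The next step is the \emph{Carleson estimate}: for any positive bounded harmonic $h$ on $B_{d_0}(\xi, A_0 r) \cap X$ satisfying $h(X_n)\to 0$ a.s.,
\[
\sup_{B_{d_0}(\xi, r) \cap X} h \leq C\,h(x_r).
\]
This is proved by dyadic iteration. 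In each annular shell $A_k = B_{d_0}(\xi, 2^{-k}A_0r) \setminus B_{d_0}(\xi, 2^{-k-1}A_0 r)$, the heat-kernel estimate of \cref{T:heat} combined with the volume bound of \cref{coneannulusvolume} and inner uniformity shows that the walk started anywhere in $A_{k+1}$ has probability at least some $\delta=\delta(D,\eta)>0$ of exiting $A_k$ through its \emph{outer} boundary rather than continuing inward toward $\partial U$. Since $h(X_n)\to 0$, the contribution from walks that never escape outward vanishes, so one obtains a contraction $\sup_{A_{k+1}} h \leq (1-\delta)\sup_{A_k} h + C h(x_{2^{-k}r})$; iterating and summing the geometric series yields the Carleson bound.

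The BHP is then extracted from Carleson via a matching lower bound in the standard way. Fix an arc $F \subset \partial B_{d_0}(\xi, A_0 r) \cap X$ consisting of points at $d_0$-distance $\geq c_0 A_0 r$ from $\partial U$. By optional stopping at $\tau = \tau_{X \setminus B_{d_0}(\xi, A_0 r)}$ together with the vanishing condition,
\[
h(x) = \E_x[h(X_\tau)] \asymp h(x_r)\,\P_x(X_\tau \in F),
\]
where the upper direction uses the Carleson estimate on $h$ restricted to the exit boundary and the lower direction uses the elliptic Harnack inequality to bound $h$ below by a constant multiple of $h(x_r)$ along $F$. Since $\P_x(X_\tau \in F)$ depends only on the domain, dividing the identities for $h_1$ and $h_2$ yields $h_1(x)/h_2(x) \asymp h_1(x_r)/h_2(x_r)$ uniformly in $x \in B_{d_0}(\xi, r) \cap X$, which is the BHP. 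The main obstacle is the Carleson step: although the argument is classical in the Euclidean Brownian setting, one must reassemble it from our specific tools (heat-kernel estimates for the cable process, exit-angle estimates, inner uniformity) and carefully justify optional stopping using the global condition $h(X_n)\to 0$, since $h$ itself is not assumed to be defined, let alone harmonic, outside $B_{d_0}(\xi, A_0 r)$.
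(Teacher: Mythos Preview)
The paper does not prove this theorem directly: it cites \cite[Theorem~4.2]{LS}, noting that the required hypotheses---volume doubling, Poincar\'e inequality, and inner uniformity---are supplied by \cref{doubling}, \cref{T:weakpi}, and \cref{L:inneruniform}. Your proposal amounts to reproving the main result of \cite{LS} in this particular setting by running Aikawa's argument by hand. The high-level architecture you describe (corkscrew points, Harnack chains, Carleson estimate, then BHP from Carleson) is the correct one.

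Your Carleson step, however, has a real gap. You invoke \cref{T:heat} and \cref{coneannulusvolume} to obtain a uniform outward-escape probability $\delta$ from any point of $A_{k+1}$, but \cref{T:heat} requires $\Beuc(x_0,Cr)\subset\carr(G)$, which fails at the relevant scale for points of $A_{k+1}$ lying arbitrarily close to $\partial U$, and \cref{coneannulusvolume} is stated for cones with apex at a vertex $u\in V$, not at $\xi\in\partial U$. Producing the needed near-boundary hitting estimate \emph{uniformly} over such points is precisely the technical heart of \cite{A} and \cite{LS}, and it is not a consequence of the interior heat-kernel bound together with inner uniformity alone. Moreover, even granting your escape probability, the recursion $\sup_{A_{k+1}} h \leq (1-\delta)\sup_{A_k} h + C\,h(x_{2^{-k}r})$ does not close by ``summing the geometric series'': Harnack chaining between $x_{2^{-k}r}$ and $x_r$ only yields $h(x_{2^{-k}r})\le C^{k}\,h(x_r)$, not a bound uniform in $k$, so the additive terms can swamp the contraction. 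The efficient route is the paper's: verify the abstract hypotheses and invoke \cite{LS}.
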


Theorem~\ref{cableehi} follows from Theorem~3.5 of \cite{Sturm}.
To obtain Theorem~\ref{bhp}, we use~\cite{LS}, Theorem~4.2. We take
their $\hat\sE$ and $\sE$ to be our $\sE$, their spaces
$X$ and $Y$ to be $\oX$, and their $\Omega$ to be $X$.
Since in this case $\hat\sE= \sE$ Assumptions 1 and 2 of \cite{LS}
hold, and the conditions of volume doubling and the Poincar\'e inequality
needed in \cite{LS} are provided by Theorem~\ref{T:weakpi} and Lemma \ref{doubling}.
Finally, Lemma~\ref{L:inneruniform} shows that $\Omega$ is inner uniform.

We now proceed to the proof of Theorem~\ref{T:martinbdry}. It has been known
since Ancona \cite{Ancona} that a boundary Harnack principle such as
Theorem~\ref{bhp} implies that the Martin boundary is homeomorphic to the
Euclidean boundary. The papers \cite{A,ALM,LS} all contain results of
this kind. In particular, the proof in \cite{A} is quite robust, and
translates to our setting with only minor changes. However, since the
argument is both reasonably short and illuminating, we include it for the
sake of completeness.

%

For convenience, we consider the Martin kernels as a function of the first
coordinate, that is, we denote $M_y(\cdot) = G(\cdot,y)/G(x_0,y)$. Let
$\cH^+$ denote the set of positive harmonic functions $h$ on $X$,
normalized to have $h(x_0)=1$. Note that on any locally finite connected
graph, $\cH^+$ is compact w.r.t. the product (pointwise) topology. Of
those, we let $\cH^+_0$ denote the set of functions so that
$h(X_n)\mathop{\rightarrow}\limits_{n\to\infty}^{\mathrm{a.s.}} 0$ for any starting point
$X_0$. By
the martingale convergence theorem \cite{D}, a.s. convergence holds for
any positive harmonic function; it is of course enough to assume the limit
is a.s. $0$ for a single starting point. Finally, for $\xi\in
\partial U$
let us denote by $\cH_\xi$ those functions $h\in\cH^+_0$ which are bounded
on $X \setminus B_{d_0}(\xi,r)$ for any $r>0$. Our immediate goal is the
following.

\begin{proposition}\label{P:uniqueness}
For any $\xi\in\partial U$, the set $\cH_\xi$ is a singleton.
\end{proposition}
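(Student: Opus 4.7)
Let $h_1, h_2 \in \cH_\xi$, normalized so $h_i(x_0) = 1$. Non-emptiness of $\cH_\xi$ is naturally handled separately by taking a subsequential pointwise limit of the Martin kernels $M_y$ as $y \to \xi$ (using compactness of $\cH^+$ and the fact that such limits inherit the boundedness property outside neighborhoods of $\xi$), so the core of the plan is to show $h_1 \equiv h_2$. The strategy is the oscillation-reduction argument of Aikawa, driven by iterated applications of the Boundary Harnack Principle (Theorem~\ref{bhp}).

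For small $r>0$ define
\[
m_r = \inf_{x \in X \cap B_{d_0}(\xi,r)} \frac{h_1(x)}{h_2(x)}, \qquad M_r = \sup_{x \in X \cap B_{d_0}(\xi,r)} \frac{h_1(x)}{h_2(x)}.
\]
The auxiliary functions $u_r = h_1 - m_r h_2$ and $v_r = M_r h_2 - h_1$ are non-negative harmonic on $X$, and satisfy $u_r(X_n), v_r(X_n) \to 0$ a.s.\ since each $h_i$ does. The plan is to apply Theorem~\ref{bhp} to the pair $(u_r, v_r)$ on $B_{d_0}(\xi, r)$. Since $u_r/v_r = (h_1/h_2 - m_r)/(M_r - h_1/h_2)$, a standard algebraic manipulation converts the resulting BHP bound on $B_{d_0}(\xi, r/A_0)$ into the geometric contraction
\[
M_{r/A_0} - m_{r/A_0} \leq \theta (M_r - m_r)
\]
for some $\theta = \theta(A_1) \in (0,1)$. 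Iterating along radii $r_n = A_0^{-n} R$ yields $M_{r_n} - m_{r_n} \to 0$, so $h_1/h_2$ converges to some constant $\lambda \in (0,\infty)$ as $x\to \xi$ in $X$.

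Given the limit $\lambda$, consider $w = h_1 - \lambda h_2$. It is harmonic, $w(X_n) \to 0$ a.s., bounded on $X \setminus B_{d_0}(\xi, \epsilon)$ for every $\epsilon > 0$, and by the previous step satisfies $|w| \leq \delta \cdot h_2$ on a suitable ball $B_{d_0}(\xi, r(\delta))$ for every $\delta > 0$. A martingale / maximum principle argument—comparing $w$ and $-w$ against small multiples of the bounded harmonic function $\E_x[\mathbf{1}_{X_\infty \notin B_{d_0}(\xi,r)}]$—forces $w \equiv 0$, and the normalization $h_1(x_0) = h_2(x_0) = 1$ then gives $\lambda = 1$, so $h_1 = h_2$.

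The main obstacle is justifying the application of Theorem~\ref{bhp} to the pair $(u_r, v_r)$: BHP requires boundedness of both functions on $B_{d_0}(\xi, A_0 r)$, yet $h_1, h_2$ may blow up at $\xi$. The hope is that the special combinations $u_r$ and $v_r$ are constructed so that the singular behavior at $\xi$ cancels, rendering them bounded on the required ball. Setting this up requires a preliminary bootstrap step showing that $h_1/h_2$ is bounded above and below uniformly on all of $X$, which follows from the elliptic Harnack inequality (Theorem~\ref{cableehi}) combined with BHP at boundary points $\zeta \neq \xi$ (where both $h_i$ are bounded, so BHP applies directly). This initial uniform ratio bound is what seeds the contraction iteration and is the most delicate part of the argument.
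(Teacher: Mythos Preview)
Your contraction scheme has a genuine gap at its core. \cref{bhp} as stated in this paper requires the two functions to be \emph{bounded} on $B_{d_0}(\xi,A_0 r)$, and your $u_r=h_1-m_r h_2$, $v_r=M_r h_2-h_1$ are not: from a uniform ratio bound $A^{-1}\le h_1/h_2\le A$ you only get $u_r\le (A-m_r)h_2$, which still blows up with $h_2$ near $\xi$. The ``hope'' that the singularities cancel is in fact equivalent to what you are trying to prove: if $u_r$ were bounded and nonnegative on a neighbourhood of $\xi$, then (being also bounded away from $\xi$ and satisfying $u_r(X_n)\to 0$) bounded martingale convergence would force $u_r\equiv 0$ directly, with no iteration. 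So the oscillation-reduction step, as written, cannot be launched with the BHP available here.

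The paper sidesteps this entirely. It never applies BHP at $\xi$. Instead, for each small $r$ it picks the two boundary points $\xi_1,\xi_2$ at distance $r$ from $\xi$, runs an inner-uniform curve between them (\cref{L:inneruniform}), covers it by a bounded chain of balls, and applies BHP only at $\xi_1,\xi_2$ (where both $h_i$ \emph{are} bounded, since $h_i\in\cH_\xi$) together with interior Harnack along the chain. A short optional-stopping argument then propagates the ratio bound from the curve to all of $X\setminus B_{d_0}(\xi,r)$, and letting $r\downarrow 0$ gives the global bound $A^{-1}\le h_1/h_2\le A$. The finish is not an oscillation contraction but Ancona's self-improvement: if $c=\sup h_1/h_2>1$ then $h_3=(ch_1-h_2)/(c-1)\in\cH_\xi$, and $h_2\le c h_3$ yields $h_2/h_1\le c^2/(2c-1)<c$, a contradiction. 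Your ``preliminary bootstrap'' is thus the whole argument; once you have it, no iteration at $\xi$ is needed, and indeed none is available with the form of BHP proved here.
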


We first prove that $\cH_\xi$ is not empty.

\begin{lemma}\label{L:martinexists}
Let $y_n$ be a sequence of vertices and suppose $y_n \to\xi\in
\partial
U$. Then there exists a subsequence $y_{n_k}$ such that $M_{y_{n_k}}$
converges pointwise to some $h\in\cH_\xi$.
\end{lemma}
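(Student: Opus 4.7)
The plan is to extract a pointwise-convergent subsequence of $(M_{y_n})$ using a compactness argument, and then verify that the limit belongs to $\cH_\xi$. The central tool is the boundary Harnack principle (\cref{bhp}).

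For the subsequence extraction, I first show that for each fixed $x \in V$, the sequence $M_{y_n}(x)$ is eventually bounded. Fix $r>0$ small enough that $x, x_0 \notin B_{d_0}(\xi, A_0 r)$. By reversibility $w_x G(x,y) = w_y G(y,x)$, so $M_y(x) = \frac{w_{x_0}}{w_x} \frac{G(y,x)}{G(y,x_0)}$; the two functions $y\mapsto G(y,x)$ and $y\mapsto G(y,x_0)$ are positive, bounded by $G(x,x), G(x_0,x_0)$, and harmonic on $B_{d_0}(\xi, A_0 r)$ (since their singularities $x, x_0$ lie outside), and they vanish along the walk: $G(X_n,v)$ is a non-negative supermartingale, hence converges a.s.; its expectation $\E_{x_0} G(X_n,v) = \sum_{k\geq n}\prob_{x_0}(X_k = v)\to 0$ by transience, and so Fatou forces the a.s.\ limit to be $0$. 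Applying \cref{bhp} to the pair $(G(\cdot,x), G(\cdot,x_0))$ yields, for any reference vertex $y^* \in B_{d_0}(\xi, r) \cap V$ (which exists since $\xi$ is an accumulation point of vertices, by \cref{L:sausage}),
\[
M_y(x) \leq A_1 M_{y^*}(x) < \infty \qquad \forall\, y \in B_{d_0}(\xi, r)\cap V.
\]
Since $y_n \to \xi$, this bound applies to $M_{y_n}(x)$ for all large $n$. A diagonal argument over the countable set $V$ then produces a subsequence $y_{n_k}$ such that $M_{y_{n_k}}$ converges pointwise to some limit $h : V \to [0,\infty)$ with $h(x_0)=1$.

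Harmonicity of $h$ is inherited from the $M_{y_{n_k}}$: the harmonicity relation at a vertex $v$ is a finite sum (bounded degree), and $M_{y_{n_k}}$ is eventually harmonic at $v$ as $y_{n_k} \to \xi \notin V$, so the relation passes to the pointwise limit. Positivity follows from $h(x_0)=1$ and the elliptic Harnack inequality (\cref{cableehi}) chained along a finite path between any two vertices. Hence $h \in \cH^+$.

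To place $h$ in $\cH_\xi$, I check the two defining properties. The trivial bound $M_{y^*}(x) = G(x,y^*)/G(x_0, y^*) \leq G(y^*,y^*)/G(x_0,y^*)$ (valid for all $x\in V$), combined with the BHP estimate above, yields $h(x) \leq A_1 G(y^*,y^*)/G(x_0,y^*)$ uniformly on $V\setminus B_{d_0}(\xi, A_0 r)$, so $h$ is bounded outside any neighborhood of $\xi$. For the vanishing $h(X_n)\to 0$ a.s., fix $r_1 > 0$: on the event $\{X_\infty \notin B_{d_0}(\xi, A_0 r_1)\}$, the walk eventually leaves this ball for good, and there $h(X_n) \leq A_1 M_{y^*}(X_n)$ with $y^* \in B_{d_0}(\xi, r_1)\cap V$; the latter tends to $0$ a.s.\ by the same supermartingale+Fatou argument applied to $G(\cdot, y^*)$. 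Letting $r_1 \downarrow 0$, the union of these events equals $\{X_\infty \neq \xi\}$, which has full probability by the non-atomicity asserted in \cref{P:rwconverge}, so $h(X_n) \to 0$ a.s.

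The main obstacle is verifying the BHP hypotheses for the Green function viewed in its first argument, especially the a.s.\ decay $G(X_n, v) \to 0$; once that is in hand, everything else is a clean combination of the BHP ratio bound, the trivial upper bound on Green functions, and the non-atomicity of $X_\infty$ established earlier.
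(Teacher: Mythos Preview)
Your proof is correct and follows essentially the same route as the paper: both apply the boundary Harnack principle (\cref{bhp}) to the pair $G(\cdot,x)$, $G(\cdot,x_0)$ on a ball around $\xi$ (after using reversibility to swap arguments), deduce the uniform comparison $M_{y}(x)\asymp M_{y^*}(x)$ for $y$ near $\xi$ and $x$ away from $\xi$, and then read off boundedness outside neighborhoods of $\xi$ and the a.s.\ vanishing of $h(X_n)$ via $G(X_n,y^*)\to 0$ together with the non-atomicity of $X_\infty$ from \cref{P:rwconverge}. The only cosmetic differences are that you spell out the diagonal extraction and the supermartingale/Fatou argument for $G(X_n,v)\to 0$, whereas the paper invokes compactness of $\cH^+$ and transience directly.
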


\begin{pf}
Since $\cH^+$ is compact, there exist a subsequence $y_{n_k}$ such that
$M_{y_{n_k}}$ converges pointwise. For clarity, we pass to the
subsequence. Let $M_\xi$ be the limit. Let us now prove that $M_\xi
\in
H_\xi$. It is clear that $M_\xi$ is harmonic and $M_\xi(x_0)=1$ since
these are local constraints and are immediately satisfied by the limiting
procedure, so we need to show that
$M_\xi(X_n)\mathop{\rightarrow}\limits_{n\to\infty}^{\mathrm{a.s.}} 0$ and that $M_\xi$ is bounded
outside any neighborhood of $\xi$.

Recall that by the reversibility of the random walk we have
%
\begin{equation}
\label{reversible} \deg(x) \cdot G(x,y) = \deg(y) \cdot G(y,x),
\end{equation}
and since degrees are bounded, $G(x,y)$ and $G(y,x)$ are equivalent up to
constants. We therefore have that
\[
M_{y_k}(x) \asymp\frac{G(y_k,x)}{G(y_k,x_0)}.
\]

Let $A_0$ be the constant from Theorem~\ref{bhp}, let $r>0$ be
arbitrary small
such that $x_0 \notin B_{d_0}(\xi,A_0 r)$ and let $x$ be an arbitrary
vertex satisfying $x \notin B_{d_0} (\xi, A_0 r)$. Define the functions
$h_0=G(\cdot,x_0)$ and $h_1=G(\cdot,x)$. The functions $h_0, h_1$ are
positive, harmonic on $X \cap B_{d_0}(\xi,r)$ and bounded above by
$G(x,x)$ and $G(x_0,x_0)$, respectively. Furthermore, both tend to $0$
almost surely over the random walk since $G$ is a transient graph. Hence,
we may apply Theorem~\ref{bhp} to them and deduce that
\[
{G(z, x) \over G(z, x_0)} \asymp{G(v_r, x) \over G(v_r, x_0)},
\]
where $v_r,z$ are any two vertices in $B_{d_0}(\xi,r)$ and the constants
in the $\asymp$ do not depend on the choice of $x$. Let $k_0$ be a
number so that for all $k \geq k_0$ we have $y_{k}\in B_{d_0}(\xi,r)$ so
by the previous approximate equality we get that for any $k\geq k_0$ we
have
%
\begin{equation}
\label{boundusebhp} M_{y_k}(x) \asymp\frac{G(v_r, x)}{G(v_r,x_0)} \asymp
\frac{G(x,v_r)}{G(x_0,v_r)},
\end{equation}
for all $x \notin B_{d_0} (\xi, A_0 r)$ and $v_r$ a fixed vertex (its
choice may depend on $r$). Since $G(x,v_r) \leq G(v_r,v_r)$, we learn that
$M_{y_k}$ is bounded outside of $B_{d_0} (\xi, r)$ for any $r>0$ and
$k>k_0$, and we deduce the same for $M_\xi$ immediately.

Next, by Proposition~\ref{P:rwconverge} the probability that $\lim X_t
= \xi$ is
$0$. We learn that almost surely there exists $r>0$ such that $X_t
\notin B_{d_0}(\xi, A_0 r)$ for all $t \geq0$. Let $k_0$ be as above. By
\eqref{boundusebhp}, we get that almost surely for any $t \geq0$
\[
M_{y_{k}}(X_t) \asymp\frac{G(v_r, X_t)}{G(v_r, x_0)},
\]
and by taking a limit $k \to\infty$ we have that almost surely
\[
M_{\xi}(X_t) \leq A {G(v_r, X_t) \over G(v_r, x_0)},
\]
for all $t\geq0$ where $A = A(D,\eta)<\infty$. Since $G(v_r,X_t) \to0$
as $t \to\infty$ almost surely, we deduce that $\lim M_\xi(X_n) = 0$
almost surely, concluding the proof.
\end{pf}

\begin{pf*}{Proof of Proposition~\ref{P:uniqueness}}
We first prove that there exists $A=A(D,\eta)<\infty$ such that for any
$h_1,h_2 \in\cH_\xi$ we have
%
\begin{equation}
\label{martinstep1} A^{-1} \leq\frac{h_1(x)}{h_2(x)} \leq A \qquad\mbox{for
all $x\in X$.}
\end{equation}
Let $r>0$ be an arbitrary small number and let $\xi_1, \xi_2 \in
\partial
U$ be the two boundary points so that $|\xi-\xi_1|=|\xi-\xi_2|=r$.
We appeal
to Lemma~\ref{L:inneruniform} and get a curve $\Gamma:(0,L) \to X$
satisfying the conditions of the lemma. We now use the curve to construct
balls $B_0, \ldots, B_N$ for some $N=N(D,\eta)<\infty$ such that for some
small $c\in(0,1/2)$ the following holds:
\begin{longlist}[(1)]
\item[(1)]$B_0 = B_{d_0}(\xi_1, r/(2A_0))$ and $B_N =
B_{d_0}(\xi_2,r/(2A_0))$,
\item[(2)] for $i=1,\ldots, N-1$ we have $B_i = B_{d_0}(x_i, cr)$ where
$x_i \in\gamma$ and $d_0(x_i,\partial U) > 2 c r$,
\item[(3)]$B_i \cap B_{i+1} \neq\varnothing$ for $i=0,\ldots, N-1$.
\end{longlist}
We apply Theorems \ref{bhp} and \ref{cableehi} to obtain that there exists
$A=A(D,\eta) <
\infty$ such that
\[
A^{-1} \leq\frac{h_1(x)/h_2(x)}{h_1(x')/h_2(x')} \leq A \qquad \forall x,x'
\in\bigcup_{i=1}^N B_i.
\]
Indeed, the assertion for $x,x'\in B_0$ and $x, x'\cup B_N$ is precisely
Theorem~\ref{bhp}. Moreover, Theorem~\ref{cableehi} gives that the
values of $h_1,h_2$
within $B_1 \cup\cdots\cup B_{N-1}$ change by at most a multiplicative
constant.

Fix $x' \in\gamma$ and note that $h_1(x)/h_2(x) \leq q$ for all $x\in
\gamma$ where $q = A h_1(x')/ h_2(x')$. Then $g(x) = h_1(x) - q h_2(x)$ is
harmonic and nonpositive on $\gamma$. The martingale $g(X_n)$ stopped
when hitting $\gamma$ is bounded, converges to $0$ if $\gamma$ is not
hit, and is stopped at a negative value if $\gamma$ is hit. By
$L^1$-convergence for bounded martingales $g(x)\leq0$ everywhere, and so
\[
A^{-1} \leq\frac{h_1(x)/h_2(x)}{h_1(x')/h_2(x')} \leq A \qquad \forall x \in X \setminus
B_{d_0}(\xi,r).
\]
In particular, $h_1(x')/h_2(x') \leq A h_1(x_0)/h_2(x_0) = A$ and
similarly $h_1(x')/\break  h_2(x') \geq A^{-1}$. Hence, $A^{-2} \leq h_1(x)/h_2(x)
\leq A^2$ for all $x \in X\setminus\Veuc(\xi,r)$. Since $r>0$ was
arbitrary, this gives \eqref{martinstep1}.

Next, we show that in fact $A=1$; the following argument is due to Ancona
\cite{Ancona}. Indeed, write
\[
c = \sup_{h_1,h_2 \in H_\xi, x\in X} {h_1(x) \over h_2(x)},
\]
so that $c\in[1,\infty)$. Assume by contradiction that $c>1$ and let
$h_1,h_2\in H_\xi$. Then $h_3 = (ch_1 - h_2)/(c-1)$ is a function in
$H_\xi$ so $h_2 \leq ch_3$ which simplifies to $(2c-1)h_2 \leq c^2
h_1$. Since $c^2/(2c-1) < c$, we have reached a contradiction.
\end{pf*}

\begin{pf*}{Proof of Theorem~\ref{T:martinbdry}$'$}
Minimality of $M_\xi$ follows easily from Proposition~\ref
{P:uniqueness}, since if
$0\leq h\leq M_\xi$ then $h(\cdot)/h(x_0)$ is easily seen to be in
$\cH_\xi$, and so it must equal $M_\xi$.

Suppose $y_n\to\xi\in\partial U$ then Proposition~\ref
{P:uniqueness} and Lemma~\ref{L:martinexists}
together show that $\lim_{y_n \to\xi} M_{y_n}(\cdot)$ exists and is the
unique function in $\cH_\xi$. Thus, convergence of $y_n$ implies
convergence of $M_{y_n}$.

Next, note that if $\xi\neq\xi'$ are two points on $\partial U$, then
$M_{\xi}\neq M_{\xi'}$. Indeed, $M_\xi$ is an unbounded function, since
otherwise, by the bounded martingale convergence theorem we would get
that $\E\lim M_\xi(X_n) = M_\xi(x_0)=1$, contradicting the fact that
$M_\xi(X_n) \to0$ almost surely. However, $M_\xi$ is bounded away from
$\xi$ and so must be unbounded in any neighborhood of $\xi$. It follows
that $M_\xi\neq M_{\xi'}$.

Now, suppose we have a convergent sequence $M_{y_n}\to M_\infty$ for some
sequence $y_n$. Since $U$ is compact, there is a convergent subsequence
$y_{n_k}\to\xi$. If $\xi$ is not in $\partial U$ then eventually
$y_{n_k}=\xi$. Otherwise, $M_{y_{n_k}}\to M_\xi$, and in either case
$M_\infty=M_\xi$. Since $\xi$ is determined by $M_\xi$, we have that
$y_n\to\xi$, completing the proof of~(1).

Finally, we show that the map $\xi\mapsto M_\xi(\cdot)$ is a
homeomorphism. It is invertible, so we need continuity of the map and its
inverse. Suppose $\xi_n\to\xi$ are points in $\partial U$. For an
arbitrary $x$, we may find $y_n$ so that $d(y_n,\xi_n)<\frac{1}n$, and also
$|M_{y_n}(x)-M_{\xi_n}(x)|\leq\frac{1}n$. We have that $y_n\to\xi$ and,
therefore, $M_{y_n}(x)\to M_\xi(x)$, and so also $M_{\xi_n}(x)\to
M_\xi(x)$.
Similarly, if $M_{\xi_n} \to M_\xi$ we can diagonalize to find $y_n$ with
$d(y_n,\xi_n)<\frac{1}n$ so that $M_{y_n}\to M_\xi$. By (1), we have
$y_n\to\xi$ and, therefore, $\xi_n\to\xi$.
\end{pf*}

\begin{pf*}{Proof of Theorem~\ref{T:mainmainthm}$'$}
We appeal to general properties of the Martin boundary; see Chapter~24 of
\cite{woess} for a concise introduction. This theory implies that any
positive harmonic function $h$ can be represented as an integral on the
Martin boundary $\mathcal{M}$ with respect to some measure. When $h$ is
bounded, this measure is absolutely continuous with respect to the exit
measure on the Martin boundary, hence it can be written as
%
\begin{equation}
\label{w1} h(x) = \int_\mathcal{M} M(x) f(M) \,d
\nu_{x_0}(M),
\end{equation}
where $\nu_{x_0}$
is the law of $\lim_n M_{X_n}(\cdot)$ starting from $x_0$ and
$f:\mathcal{M} \to\R$ is some bounded measurable function; see Theorem~24.12 in \cite{woess}. Theorem~24.10 in \cite{woess} states that the
Radon--Nikodym derivative of $\nu_x$ with respect to $\nu_{x_0}$ is the
function from $\mathcal{M}$ to $\R$ mapping each $M\in\mathcal{M}$ to
$M(x)$. Hence, we may rewrite \eqref{w1} as
\[
h(x) = \int_\mathcal{M} f(M) \,d \nu_x(M).
\]
Now, apply Theorem~\ref{T:martinbdry} and let $\iota: \partial U \to
\mathcal{M}$ be the homeomorphism $\xi\mapsto
M_\xi$. Theorem~\ref{T:martinbdry} implies that the image under
$\iota$ of the
random walk's exit measure on $\partial U$ coincides with $\nu_x$,
completing our proof.
\end{pf*}

\section*{Acknowledgement}
We are grateful to Dmitry Chelkak for useful discussions.

%

\printaddresses
\end{document}